    \theoremstyle{plain}
    \newtheorem{proposition}[theorem]{Proposição}
    \theoremstyle{definition}
    \newtheorem{assumption}[theorem]{Asserção}
    \theoremstyle{remark}
    \newtheorem{remark}[theorem]{Nota}
    \newtheorem*{remark*}{Nota}
    \newtheorem{remarks}[theorem]{Notas}}%
    \theoremstyle{plain}
    \newtheorem{proposition}[theorem]{Proposition}
    \theoremstyle{definition}
    \newtheorem{assumption}[theorem]{Assumption}
    \theoremstyle{remark}
    \newtheorem{remark}[theorem]{Remark}
    \newtheorem*{remark*}{Remark}
    \newtheorem{remarks}[theorem]{Remarks}}
    \theoremstyle{plain}
    \newtheorem{theorem}{Teorema}[section]
    \newtheorem{lemma}[theorem]{Lema}
    \newtheorem{proposition}[theorem]{Proposição}
    \newtheorem{corollary}[theorem]{Corolário}
    \theoremstyle{definition}
    \newtheorem{definition}[theorem]{Definição}
    \newtheorem{examples}[theorem]{Exemplos}
    \newtheorem{example}[theorem]{Exemplo}
    \theoremstyle{remark}
    \newtheorem{remark}[theorem]{Nota}}%
    \theoremstyle{plain}
    \newtheorem{theorem}{Theorem}[section]
    \newtheorem{lemma}[theorem]{Lemma}
    \newtheorem{proposition}[theorem]{Proposition}
    \newtheorem{corollary}[theorem]{Corollary}
    \theoremstyle{definition}
    \newtheorem{definition}[theorem]{Definition}
    \newtheorem{examples}[theorem]{Examples}
    \newtheorem{example}[theorem]{Example}
    \newtheorem{assumption}[theorem]{Assumption}
    \theoremstyle{remark}
    \newtheorem{remark}[theorem]{Remark}
    \newtheorem*{remark*}{Remark}
  \newlist{tfae}{enumerate}{1}%
  \setlist[tfae,1]{label=(\roman*)}%
  \def\nlabel#1#2{\begingroup #2%
    \def\@currentlabel{#2}%
    \phantomsection\label{#1}\endgroup
  }%
  \newlist{conditions}{description}{1}%
  \setlist[conditions]{font=\normalfont\space,labelindent=\parindent}
\def\slashedarrowfill@#1#2#3#4#5{%
  $\m@th\thickmuskip0mu\medmuskip\thickmuskip\thinmuskip\thickmuskip
   \relax#5#1\mkern-7mu%
   \cleaders\hbox{$#5\mkern-2mu#2\mkern-2mu$}\hfill
   \mathclap{#3}\mathclap{#2}%
   \cleaders\hbox{$#5\mkern-2mu#2\mkern-2mu$}\hfill
   \mkern-7mu#4$%
}
\newcommand*{\rightmodarrowfill@}{\slashedarrowfill@\relbar\relbar{\raisebox{0pc}{$\hspace{1pt}\circ$}}\rightarrow}
\newcommand*{\xmodto}[2][]{\ext@arrow 0055{\rightmodarrowfill@}{\;#1\;}{\;#2\;}}
\newcommand*{\lmodto}{\xmodto{\phantom{\to}}}
\newcommand*{\modto}{\xmodto{\;}}
\newcommand{\ff}{\mathfrak{f}}
\newcommand{\fv}{\mathfrak{v}}
\newcommand{\fx}{\mathfrak{x}}
\newcommand{\fy}{\mathfrak{y}}
\newcommand{\calL}{\mathcal{L}}
\newcommand{\calR}{\mathcal{R}}
\newcommand{\calS}{\mathcal{S}}
\DeclareMathOperator{\upc}{\uparrow\!}
\DeclareMathOperator{\downc}{\downarrow\!}
\DeclareMathOperator{\im}{im}
\DeclareMathOperator{\CoAlg}{CoAlg}
\newcommand*{\quot}[2]{{^{\textstyle #1}\big/_{\textstyle #2}}}
\DeclareMathOperator{\Mod}{Mod}
\newcommand{\catA}{\catfont{A}}
\newcommand{\catC}{\catfont{C}}
\newcommand{\catX}{\catfont{X}}
\newcommand{\SET}{\catfont{Set}}
\newcommand{\ORD}{\catfont{Ord}}
\newcommand{\POST}{\catfont{Pos}}
\newcommand{\MET}{\catfont{Met}}
\newcommand{\UMET}{\catfont{UMet}}
\newcommand{\BMET}{\catfont{BMet}}
\newcommand{\PROBMET}{\catfont{ProbMet}}
\newcommand{\TOP}{\catfont{Top}}
\newcommand{\STONE}{\catfont{BooSp}}
\newcommand{\SPEC}{\catfont{Spec}}
\newcommand{\PRIEST}{\catfont{Priest}}
\newcommand{\ESA}{\catfont{EsaSp}}
\newcommand{\GESA}{\catfont{GEsaSp}}
\newcommand{\COMPHAUS}{\catfont{CompHaus}}
\newcommand{\POSCH}{\catfont{PosComp}}
\newcommand{\COMPHAUSAB}{\catfont{CompHausAb}}
\newcommand{\STONEREL}{\catfont{BooSpRel}}
\newcommand{\PRIESTDIST}{\catfont{PriestDist}}
\newcommand{\ESADIST}{\catfont{EsaDist}}
\newcommand{\GESADIST}{\catfont{GEsaDist}}
\newcommand{\POSCHDIST}{\catfont{PosCompDist}}
\newcommand{\Cats}[1]{#1\text{-}\catfont{Cat}}
\newcommand{\Dists}[1]{#1\text{-}\catfont{Dist}}
\newcommand{\CatCHs}[1]{#1\text{-}\catfont{CatCH}}
\newcommand{\Priests}[1]{{#1\text{-}\catfont{Priest}}}
\newcommand{\FinSups}[1]{#1\text{-}\catfont{FinSup}}
\newcommand{\FinLats}[1]{#1\text{-}\catfont{FinLat}}
\newcommand{\BOOL}{\catfont{BA}}
\newcommand{\DLAT}{\catfont{DL}}
\newcommand{\DLO}{\catfont{DLO}}
\newcommand{\HEYT}{\catfont{HA}}
\newcommand{\CCD}{\catfont{CCD}}
\newcommand{\TAL}{\catfont{TAL}}
\newcommand{\FINSUP}{\catfont{FinSup}}
\newcommand{\CSALG}{C^*\text{-}\catfont{Alg}}
\newcommand{\op}{\mathrm{op}}
\newcommand{\sep}{\mathrm{sep}}
\newcommand{\comp}{\mathrm{comp}}
\newcommand{\fin}{\mathrm{fin}}
\newcommand{\cc}{\mathrm{cc}}
\newcommand{\dlat}{\mathrm{DL}}
\newcommand{\bool}{\mathrm{BA}}
\newcommand{\heyt}{\mathrm{HA}}
\newcommand{\two}{\catfont{2}}
\newcommand{\Pp}{\overleftarrow{[0,\infty]}_+}
\newcommand{\Pm}{\overleftarrow{[0,\infty]}_\wedge}
\newcommand{\V}{\mathcal{V}}
\newcommand{\quantale}{(\V,\otimes,k)}
\newcommand{\ddf}{\mathcal{D}}
\newcommand{\luk}{\odot}
\newcommand{\catfont}[1]{\mathsf{#1}}
\newcommand{\mate}[1]{\,^\ulcorner\! #1^\urcorner}
\newcommand{\doo}[1]{\overset{\centerdot}{#1}}
\newcommand{\ftC}{\functorfont{C}}
\newcommand{\ftD}{\functorfont{D}}
\newcommand{\ftH}{\functorfont{H}}
\newcommand{\ftU}{\functorfont{U}}
\newcommand{\ftV}{\functorfont{V}}
\newcommand{\ftII}[1]{{\catfont{|}#1\catfont{|}}}
\newcommand{\functorfont}{\mathsf}
\newcommand{\mH}{\monadfont{H}}
\newcommand{\mU}{\monadfont{U}}
\newcommand{\mV}{\monadfont{V}}
\newcommand{\monadfont}[1]{\mathbbm{#1}}
\newcommand{\umonad}{(\ftU,m,e)}
\DeclareMathAlphabet{\mathpzc}{OT1}{pzc}{m}{it}
\DeclareMathOperator{\coyoneda}{\mathpzc{h}}
\DeclareMathOperator{\coyonmult}{\mathpzc{w}}
\newcommand{\hmonad}{(\ftH,\coyonmult,\coyoneda)}
\newcommand{\vmonad}{(\ftV,\coyonmult,\coyoneda)}
\newcommand{\thU}{\theoryfont{U}}
\newcommand{\theoryfont}[1]{\mathscr{#1}}
\newcommand{\utheory}{(\mU,\V,\xi)}
\newcommand{\ZZ}{\field{Z}}
\newcommand{\RR}{\field{R}}
\newcommand{\TT}{\field{T}}
\newcommand{\field}[1]{\mathds{#1}}
\newcommand{\df}[1]{\emph{\textbf{#1}}}
\DeclareMathOperator{\Mnd}{Mnd}
\DeclareMathOperator{\LaxMnd}{LaxMnd}
\begin{document}

\title{Duality theory for enriched Priestley spaces}

\author{Dirk Hofmann and Pedro Nora%
  \thanks{This work is supported by the ERDF -- European Regional Development
    Fund through the Operational Programme for Competitiveness and
    Internationalisation -- COMPETE 2020 Programme, by German Research Council
    (DFG) under project GO~2161/1-2, and by The Center for Research and
    Development in Mathematics and Applications (CIDMA) through the Portuguese
    Foundation for Science and Technology (FCT -- Fundação para a Ciência e a
    Tecnologia), references UIDB/04106/2020 and UIDP/04106/2020.}}

\publishers{\small{Center for Research and Development in Mathematics and
    Applications, Department of Mathematics, University of Aveiro, Portugal.

    Friedrich-Alexander-Universität Erlangen-Nürnberg, Germany.

    \href{mailto:dirk@ua.pt}{\texttt{dirk@ua.pt}}, \quad
    \href{mailto:pedro.nora@fau.de}{\texttt{pedro.nora@fau.de}}}}

\date{}

\maketitle


\begin{abstract}
  The term Stone-type duality often refers to a dual equivalence between a
  category of lattices or other partially ordered structures on one side and a
  category of topological structures on the other. This paper is part of a
  larger endeavour that aims to extend a web of Stone-type dualities from
  ordered to metric structures and, more generally, to quantale-enriched
  categories. In particular, we improve our previous work and show how certain
  duality results for categories of \([0,1]\)-enriched Priestley spaces and
  \([0,1]\)-enriched relations can be restricted to functions. In a broader
  context, we investigate the category of quantale-enriched Priestley spaces and
  continuous functors, with emphasis on those properties which identify the
  algebraic nature of the dual of this category.
\end{abstract}

\section{Introduction}
\label{sec:introduction}

Naturally, the starting point of our investigation of Stone-type dualities is
\citeauthor{Sto36}'s classical \citeyear{Sto36} duality result
\begin{equation}\label{d:eq:3}
  \STONE\sim\BOOL^{\op}
\end{equation}
for Boolean algebras and homomorphisms together with its generalisation
\begin{equation*}
  \SPEC\sim\DLAT^{\op}
\end{equation*}
to distributive lattices and homomorphisms obtained shortly afterwards in
\citep{Sto38}. Here \(\STONE\) denotes the category of Boolean
spaces\footnote{Also called Stone spaces in the literature, see \citep{Joh86},
  for instance.} and continuous maps, and \(\SPEC\) the category of spectral
spaces and spectral maps (see also \citep{Hoc69}). In this paper we will often
work with \emph{Priestley spaces} rather than with \emph{spectral spaces}, and
therefore consider the ``equivalent equivalence''
\begin{equation}\label{d:eq:7}
  \PRIEST\sim\DLAT^{\op}
\end{equation}
discovered in \citep{Pri70,Pri72}. There are many ways to deduce the duality
result \eqref{d:eq:3} from \eqref{d:eq:7}, we mention here one possibly lesser
known argument: in \citep{BGH92} it is observed that $\BOOL$ is the only
epi-mono-firm epireflective full subcategory of $\DLAT$, and, using that in both
$\STONE$ and $\PRIEST$ the epimorphisms are precisely the surjective morphisms,
an easy calculation shows that $\STONE$ is the only mono-epi-firm
mono-coreflective full subcategory of $\PRIEST$.

Exactly 20 years later, \citeauthor{Hal56} gave an extension of \eqref{d:eq:3}
to categories of \emph{continuous relations} between Boolean spaces and
\emph{hemimorphisms} between Boolean algebras, and a similar generalisation of
$\PRIEST\sim\DLAT^{\op}$ is described in \citep{CLP91}. Denoting by
\begin{itemize}
\item $\PRIESTDIST$ the category of Priestley spaces and continuous monotone
  relations, by
\item $\FINSUP$ the category of finitely cocomplete partially ordered sets and
  finite suprema preserving maps, and by
\item $\FINSUP_{\dlat}$ the full subcategory of $\FINSUP$ defined by all
  distributive lattices;
\end{itemize}
this result can be expressed as
\begin{equation}\label{d:eq:1}
  \PRIESTDIST\sim\FINSUP_{\dlat}^{\op}.
\end{equation}
We note that $\PRIESTDIST$ is precisely the Kleisli category $\PRIEST_{\mH}$ of
the Vietoris monad $\mH=\hmonad$ on $\PRIEST$, and that the functor
\(\PRIESTDIST \longrightarrow\FINSUP_{\dlat}^{\op}\) is a lifting of the
hom-functor \(\PRIESTDIST(-,1)\) into the one-element space. Furthermore, recall
that the two structures of a Priestley space -- the partial order and the
compact Hausdorff topology -- can be combined into a single topology: the
so-called downwards topology (see \citep{Jun04}, for instance). In particular,
the two-element Priestley space \(\two=\{0\le 1\}\) produces the Sierpiński
space \(\two\) with \(\{1\}\) closed, whereby the dual space \(\two^{\op}\) of
\(\two\) induces the topology on \(\{0,1\}\) with \(\{1\}\) being the only
non-trivial open subset. With this notation, the elements of the Vietories space
\(\ftH X\) of a Priestley space \(X\) can be identified with continuous maps
\(\varphi \colon X \longrightarrow\two\), whereby arrows of type \(X\lmodto 1\)
in \(\PRIESTDIST\) correspond to spectral maps
\(\psi \colon X \longrightarrow \ftH 1\simeq\two^{\op}\). In order to deduce the
equivalence \eqref{d:eq:1}, it is important to establish that there are
``enough'' spectral maps \(\psi \colon X \longrightarrow\two^{\op}\); in fact,
by definition, a partially ordered compact Hausdorff space \(X\) is Priestley
whenever the cone \((\psi \colon X \longrightarrow \two^{\op})_{\psi}\) is
point-separating and initial. Here it does not matter if we use \(\two\) or
\(\two^{\op}\) since \(\two\simeq\two^{\op}\) in \(\PRIEST\); however, when
moving to the quantale-enriched setting, the corresponding property does not
necessarily hold and therefore we must identify carefully if we refer to
\(\two\) or to \(\two^{\op}\).

Under the equivalence \eqref{d:eq:1}, continuous monotone \emph{functions}
correspond precisely to \emph{homomorphisms} of distributive lattices, therefore
the equivalence $\PRIEST\sim\DLAT^{\op}$ is a direct consequence of
\eqref{d:eq:1}. Furthermore, other well-known duality results can be obtained
from \eqref{d:eq:1} in a categorical way, we mention here the following
examples.
\begin{itemize}
\item As \eqref{d:eq:3} can be deduced from \(\PRIEST\sim\DLAT^{\op}\),
  \citeauthor{Hal56}'s duality
  \begin{displaymath}
    \STONEREL\sim\FINSUP_{\bool}^{\op}
  \end{displaymath}
  between the category \(\STONEREL\) of Boolean spaces and Boolean relations and
  the category \(\FINSUP_{\bool}\) of Boolean algebras and hemimorphisms (that
  is, the full subcategory of \(\FINSUP\) defined by all Boolean algebras) can
  be deduced from \eqref{d:eq:1}.
\item Combining $\PRIESTDIST\sim\FINSUP_{\dlat}^{\op}$ and
  $\PRIEST\sim\DLAT^{\op}$ gives immediately the duality result for distributive
  lattices with an operator (see \citep{Pet96,BKR07}).
\item The equivalence $\PRIESTDIST\sim\FINSUP_{\dlat}^{\op}$ has the
  surprising(?)\ consequence that $\PRIESTDIST$ is idempotent split
  complete. Hence, the idempotent split completion of $\STONEREL$ can be
  calculated as the full subcategory of $\PRIESTDIST$ defined by all split
  subobjects of Boolean spaces in $\PRIESTDIST$; likewise, the idempotent split
  completion of $\FINSUP_{\bool}$ can be taken as the full subcategory of
  $\FINSUP_{\dlat}$ defined by all split subobjects of Boolean algebras. Now, in
  the former case, these split subobjects are precisely the so-called Esakia
  spaces (see \citep{Esa74}), and in the latter case precisely the co-Heyting
  algebras (see \citep{MT46}); and we obtain a ``relational version'' of Esakia
  duality. We have described this more in detail in \citep{HN14}.
\end{itemize}
The situation is depicted in Figure~\ref{fig:Stone}.

\begin{figure}[H]
  \centering
  \begin{tikzcd}[cells={nodes={draw=black}}]
    \STONE\sim\BOOL^{\op} & \PRIEST\sim\DLAT^{\op} %
    \ar[ddl,squiggly,bend right] \ar[ddr,squiggly,bend left]
    \ar[l,squiggly]\\
    & \PRIESTDIST\sim\FINSUP_{\dlat}^{\op} %
    \ar[u,squiggly]%
    \ar[dl,squiggly] %
    \ar[d,squiggly]\\
    \CoAlg(\ftH)\sim\DLO^\op & \ESADIST\sim\FINSUP_\heyt^\op %
    \ar[r,squiggly] %
    & \ESA\sim\HEYT^\op
  \end{tikzcd}
  \caption{Stone type dualities}
  \label{fig:Stone}
\end{figure}
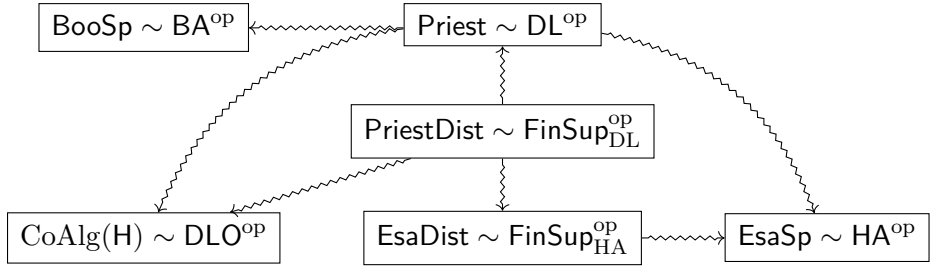

One might wish to consider all compact Hausdorff spaces in \eqref{d:eq:3}
instead of only the totally disconnected ones. Then the two-element space
\(\two=\{0,1\}\), respectively the two-element Boolean algebra, still induces
naturally an adjunction

\begin{equation*}
  \begin{tikzcd}[column sep=huge,ampersand replacement=\&]
    \COMPHAUS%
    \ar[r,"{\hom(-,\two)}", shift left, start anchor=east, end anchor=west, bend
    left=25, ""{name=U,below}]%
    \& \BOOL^{\op};%
    \ar[l,"{\hom(-,\two)}", start anchor=west,end anchor=east,shift left,bend
    left=25, ""{name=D,above}] \ar[from=U,to=D,"\bot" description]
  \end{tikzcd}
\end{equation*}
however, its restriction to the \emph{fixed} subcategories is precisely
\eqref{d:eq:3} (for the pertinent notions of duality theory we refer to
\citep{DT89, PT91}). In fact, by definition, a compact Hausdorff space $X$ is
Boolean whenever the cone $(f \colon X \longrightarrow \two)_{f}$ is
point-separating and initial with respect to the forgetful
$\COMPHAUS \longrightarrow\SET$; likewise, a partially ordered compact space $X$
is Priestley whenever the cone $(f \colon X \longrightarrow \two)_{f}$ is
point-separating and initial with respect to the forgetful
$\POSCH \longrightarrow\SET$ (equivalently, to the forgetful functor
$\POSCH \longrightarrow\COMPHAUS$).

In order to obtain a duality result for all compact Hausdorff spaces this way,
one needs to substitute the dualising object \(\two\) by a cogenerator in
$\COMPHAUS$, for instance, by the unit interval $[0,1]$ with the Euclidean
topology. Accordingly, one typically considers other types of algebras on the
dual side; \emph{i.e.} \(C^{*}\)-algebras instead of Boolean algebras. In
contrast, our aim is to develop a duality theory where one actually keeps the
``type of algebras'' in Figure~\ref{fig:Stone} but substitutes \emph{order} by
\emph{metric} everywhere; that is, one considers $[0,\infty]$-enriched
categories instead of $\two$-enriched categories (see \citep{Law73}). Therefore
one might attempt to create a network of dual equivalences

\begin{figure}[H]
  \centering
  \begin{tikzcd}[cells={nodes={draw=black}}]
    \COMPHAUS\sim (??)^{\op} & \POSCH\sim (??)^{\op} %
    \ar[ddl,squiggly,bend right] \ar[ddr,squiggly,bend left]
    \ar[l,squiggly]\\
    & \POSCHDIST\sim (??)^{\op} %
    \ar[u,squiggly]%
    \ar[dl,squiggly] %
    \ar[d,squiggly]\\
    \CoAlg(\ftH)\sim (??)^\op & \GESADIST\sim (??)^\op %
    \ar[r,squiggly] %
    & \GESA\sim (??)^\op
  \end{tikzcd}
  \caption{Metric Stone type dualities}
  \label{fig:EnrStone}
\end{figure}
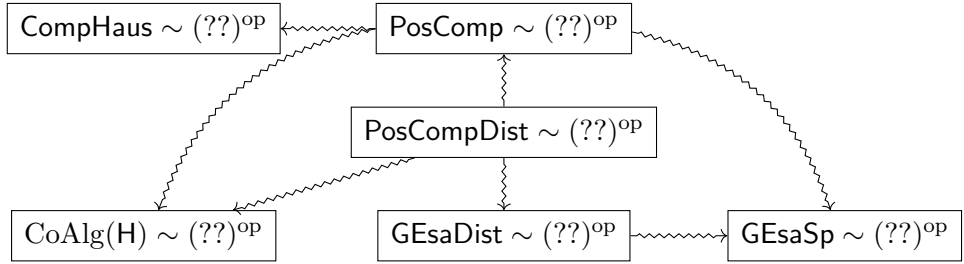

where each ``question mark category'' should be substituted by its metric
counterpart of Figure~\ref{fig:Stone}, or even better, a quantale-enriched
counterpart. For instance, for a quantale \(\V\), instead of \(\DLAT\) one would
expect a category of \(\V\)-categories with all ``finite'' weighted limits and
colimits and satisfying some sort of ``distributivity'' condition. Moreover,
these results should have the property that, when choosing the quantale
\(\V=\two\), we get the original picture of Figure~\ref{fig:Stone} back.

Unfortunately, the last requirement above does not make much sense \dots\ since
the picture of Figure~\ref{fig:EnrStone} is somehow inconsequential: both sides
of the equivalences should be generalised to corresponding metric or even
quantale-enriched versions. In particular, partially ordered compact spaces
should be substituted by their metric cousins as, for instance, studied in
\citep{HR18}. More specifically, we follow \citep{Tho09} and consider the
category \(\Cats{\V}^{\mU}\) of Eilenberg--Moore algebras and homomorphisms for
the ultrafilter monad $\mU$ on \(\Cats{\V}\). In analogy with the ordered case,
we call an $\mU$-algebra \emph{Priestley} whenever the cone of all homomorphisms
\(X \longrightarrow\V^{\op}\) in \(\Cats{\V}^{\mU}\) is point-separating and
initial. In \citep{HN18} we made an attempt to create at least parts of this
picture, for continuous quantale structures on the quantale \(\V=[0,1]\). In
Section~\ref{sec:dual-theory-enrich-conc} we improve slightly the results of
\citep{HN18} and show how certain duality results for categories of enriched
relations can be restricted to functions.

The classical duality results of \citeauthor{Sto36} and \citeauthor{Pri70} tell
us in particular that \(\STONE^\op\) and \(\PRIEST^\op\) are finitary
varieties. It is known since the late 1960's that also \(\COMPHAUS^\op\) is a
variety, not finitary but with rank \(\aleph_1\) (see \citep{Dus69,GU71});
however, this fact might not be obvious from the classical \citeauthor{Gel41a}
duality result
\begin{equation*}
  \COMPHAUS^{\op}\sim\CSALG
\end{equation*}
stating the equivalence between \(\COMPHAUS^{\op}\) and the category \(\CSALG\)
of commutative \(C^{\star}\)-algebras and homomorphisms. Nonetheless, it can be
deduced ``abstractly'' from the following well-known results.

\begin{theorem}
  \label{d:thm:6}
  A cocomplete category is equivalent to a quasivariety if and only if it has a
  regular projective regular generator.
\end{theorem}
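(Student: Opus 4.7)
The plan is to prove the two implications separately, with the real substance in \((\Leftarrow)\). For \((\Rightarrow)\), if \(\catfont{Q}\) is a quasivariety, realised as a reflective full subcategory of some variety \(\SET^{\monadfont{T}}\) closed under products and regular subobjects, then \(\catfont{Q}\) is cocomplete (colimits computed via the reflection) and the reflection of the free \(\monadfont{T}\)-algebra on one generator is the desired object \(G\). The functor \(\catfont{Q}(G,-)\) agrees with the underlying-set functor and is faithful; every algebra in \(\catfont{Q}\) is a regular quotient of a coproduct of copies of \(G\); and this functor preserves regular epimorphisms. Hence \(G\) is a regular projective regular generator.

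For the substantive direction \((\Leftarrow)\), assume \(\catfont{C}\) is cocomplete with a regular projective regular generator \(G\). I would form the adjunction
\begin{equation*}
  F \dashv T \colon \catfont{C} \longrightarrow \SET,\qquad T = \catfont{C}(G,-),\qquad F(S) = \coprod_{s \in S} G,
\end{equation*}
whose left adjoint exists by cocompleteness. This induces a monad \(\monadfont{T}\) on \(\SET\) and a comparison functor \(K \colon \catfont{C} \longrightarrow \SET^{\monadfont{T}}\). Fully faithfulness of \(K\) follows from a Beck-style argument: since \(G\) is a regular generator, each \(X\) is the coequalizer of the kernel pair of its canonical presentation \(\coprod_{T(X)} G \twoheadrightarrow X\); and since \(G\) is regular projective, \(T\) preserves these coequalizers, so \(X\) is canonically reconstructed from \(K(X)\) as a reflexive coequalizer of free \(\monadfont{T}\)-algebras.

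It then remains to identify the essential image of \(K\) as a quasivariety inside \(\SET^{\monadfont{T}}\), that is, as a full subcategory closed under products and regular subobjects. Closure under products is immediate because \(T\) preserves them and \(K\) inherits this preservation through the comparison. Closure under regular subobjects is obtained by factorising any regular mono \(A \rightarrowtail K(X)\) through the image of \(\catfont{C}\) using the (regular epi, mono)-factorisation system available under the hypotheses, pulling \(A\) back to a subobject of \(X\) in \(\catfont{C}\). The main obstacle I anticipate is precisely this last closure check: translating the categorical hypotheses of regular projectivity and regular generation into the algebraic conclusion that the image is \emph{exactly} a quasivariety requires careful coordination between the factorisation systems of \(\catfont{C}\) and \(\SET^{\monadfont{T}}\), and a verification that every regular subalgebra of \(K(X)\) lifts to \(\catfont{C}\). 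Cocompleteness enters crucially both to construct \(F\) and to produce the resolutions driving the comparison argument.
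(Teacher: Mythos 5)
The paper gives no proof of this statement---it only cites \citep[Theorem~3.6]{Ada04}---so the benchmark is the standard argument there, and your architecture (the adjunction $F\dashv T$ with $T=\catfont{C}(G,-)$, the comparison functor $K\colon\catfont{C}\longrightarrow\SET^{\monadfont{T}}$, full faithfulness, and identification of the essential image as a class closed under products and subalgebras) is indeed that argument. However, two of your steps rest on structure the hypotheses do not provide. First, you present $X$ as the coequalizer of the \emph{kernel pair} of $\varepsilon_X\colon FTX\twoheadrightarrow X$; kernel pairs are limits, and $\catfont{C}$ is only assumed cocomplete. The standard fix is to use the pair $\bigl(\varepsilon_{FTX},FT\varepsilon_X\bigr)\colon FTFTX\rightrightarrows FTX$ between \emph{copowers of $G$}: writing $\varepsilon_X$ as the coequalizer of some pair $u,v\colon W\rightrightarrows FTX$ and precomposing with the epimorphism $\varepsilon_W$, naturality shows that $\varepsilon_X$ is already the coequalizer of $\bigl(\varepsilon_{FTX},FT\varepsilon_X\bigr)$; since both objects of this pair are free, the hom-set computation $\catfont{C}(X,Y)\cong\SET^{\monadfont{T}}(KX,KY)$ goes through with no pullbacks and, incidentally, without using regular projectivity at all.

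Second, and more seriously, both closure checks are ungrounded as stated. For products, $\catfont{C}$ is not yet known to be complete, so ``$T$ preserves them'' has nothing to act on; for subalgebras, a (regular epi, mono)-factorisation system on $\catfont{C}$ is \emph{not} available from the hypotheses (cocompleteness plus a regular projective regular generator gives neither well-copoweredness nor image factorisations). The missing ingredient in both cases is the \emph{left adjoint} $L$ of $K$, which exists by cocompleteness: $L(A,a)$ is the coequalizer of $Fa$ and $\varepsilon_{FA}\colon FTA\rightrightarrows FA$. Since $K$ is fully faithful, its essential image is reflective in $\SET^{\monadfont{T}}$ and hence closed under all products. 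For a subalgebra $m\colon A\rightarrowtail KX$ one shows the unit $\eta_A\colon A\to KLA$ is bijective: injective because $K(\widehat{m})\cdot\eta_A=m$ for the adjunct $\widehat{m}\colon LA\to X$, and surjective because the coequalizer map $FA\twoheadrightarrow LA$ is sent by $\catfont{C}(G,-)$ to a surjection---this is precisely where regular projectivity enters---which factors through $\eta_A$ via the (surjective) structure map $TA\to A$. Two further points: a quasivariety must be closed under \emph{all} subalgebras, i.e.\ monomorphisms of $\SET^{\monadfont{T}}$, not only regular ones; and if ``quasivariety'' is taken syntactically, one still owes the separate (size-sensitive) step that a full subcategory of $\SET^{\monadfont{T}}$ closed under products and subalgebras is an implicational class.
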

\begin{proof}
  See, for instance, \citep[Theorem~3.6]{Ada04}.
\end{proof}

\begin{theorem}
  \label{d:thm:9}
  A category is a variety if and only if it is a quasivariety and has effective
  equivalence relations.
\end{theorem}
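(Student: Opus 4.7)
The plan is to combine Theorem~\ref{d:thm:6} with the classical characterization of varieties as cocomplete Barr-exact categories equipped with a regular projective regular generator (the Lawvere--Linton--Duskin theorem). The forward direction is essentially immediate: a variety is \emph{a fortiori} a quasivariety, since equations are a special case of implications; and in a variety every congruence on an algebra is the kernel pair of its associated quotient homomorphism, so every equivalence relation is effective.

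For the converse, suppose $\catC$ is a quasivariety with effective equivalence relations. By Theorem~\ref{d:thm:6}, $\catC$ is cocomplete and admits a regular projective regular generator $G$. The goal is to upgrade this to Barr-exactness, so that the variety characterization theorem applies. The piece missing beyond cocompleteness and effectivity of equivalence relations is regularity, i.e.\ the existence of pullback-stable (regular epi, mono) factorisations. I would obtain these by first coequalising kernel pairs to produce the factorisations (which exist by cocompleteness and the generator), and then using the projectivity of $G$ to reduce pullback-stability of regular epimorphisms to the pullback-stability of surjections in $\SET$, which is classical.

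Once $\catC$ is regular, the assumption that equivalence relations are effective promotes regularity to Barr-exactness, and the Lawvere--Linton theorem then identifies it with a variety. The main obstacle I anticipate is precisely the pullback-stability step, which requires a careful use of the projectivity and density of the generator. A cleaner alternative route is to realise the quasivariety as a regular-epireflective subcategory of a presheaf category on a small dense generator, deduce regularity from the embedding, and then track what the hypothesis of effectivity of equivalence relations buys after reflecting back; this bypasses the direct pullback computation but requires identifying the correct class of presheaves. Either way, the net conceptual content is: \emph{quasivariety} $+$ \emph{effective equivalence relations} $=$ \emph{Barr-exact} $+$ \emph{regular projective regular generator} $=$ \emph{variety}.
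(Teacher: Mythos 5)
The paper does not actually prove this statement---its ``proof'' is a pointer to \citep[Theorem~4.4.5]{Bor94a}---so there is no in-house argument to compare yours against; what can be judged is whether your reconstruction is a valid route to the cited result, and in outline it is. The forward direction is unproblematic, and your decomposition of the converse, ``quasivariety $+$ effective equivalence relations $=$ cocomplete $+$ Barr-exact $+$ regular projective regular generator $=$ variety'', is the standard characterisation-theorem route (Lawvere--Linton--Duskin--Vitale) and is consistent with how the paper intends Theorems~\ref{d:thm:6} and \ref{d:thm:9} to be used in tandem. Two soft spots deserve flagging. First, the regularity step: deriving pullback-stable (regular epi, mono)-factorisations abstractly from cocompleteness and the generator is more delicate than you suggest---coequalising a kernel pair does not by itself yield a monic second factor unless stability is already available---whereas the clean justification is the concrete one: a quasivariety is a full subcategory of a variety closed under products and subalgebras, so the factorisations are the (surjective, injective) ones and their pullback-stability follows because finite limits are computed as in $\SET$. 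Second, the closing step ``cocomplete $+$ exact $+$ regular projective regular generator $\Rightarrow$ variety'' is itself a theorem of essentially the same depth as the statement being proved, so your argument is a deduction from large known results rather than a self-contained proof; that is acceptable here, since the paper treats the statement in exactly that spirit, but note that the more elementary route---realise the quasivariety as an implicational (SP-)class inside a variety and show that effectivity of equivalence relations forces closure under homomorphic images, then invoke the (infinitary) Birkhoff theorem---avoids leaning on the monadicity characterisation altogether.
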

\begin{proof}
  See, for instance, \citep[Theorem~4.4.5]{Bor94a}
\end{proof}

Surprisingly, a similar investigation of \(\POSCH^{\op}\) was initiated only
recently: in \citep{HNN18} we show that \(\POSCH^{\op}\) is a \(\aleph_1\)-ary
quasivariety, and in \citep{Abb19, AR19_tmp} it is shown that \(\POSCH^{\op}\)
is indeed a \(\aleph_{1}\)-ary variety. In
Section~\ref{sec:dual-theory-enrich-abstr} we investigate the category
\(\Priests{\V}\) of \(\V\)-enriched Priestley spaces and morphisms, with
emphasis on those properties which identify \(\Priests{\V}^{\op}\) as some kind
of algebraic category.

\section{Quantale-enriched Priestley spaces}
\label{sec:quant-enrich-priestl}

In this section we recall the notions of quantale-enriched category and its
generalisation to compact Hausdorff spaces, which eventually leads to the notion
of \emph{quantale-enriched Priestley space} already studied in \citep{HN18,
  HN20}. We recall some of the basic definitions and properties, for more
information we refer to \citep{Kel82, Law73, Tho09} and \citep{HST14}.

\begin{definition}
  A \df{quantale} $\V=\quantale$ is a complete lattice $\V$ equipped with a
  commutative monoid structure $\otimes$, with identity $k$, so that, for each
  $u\in \V$,
  \begin{center}
    $u\otimes - \colon \V \longrightarrow \V$\quad has a right adjoint\quad
    $\hom(u,-) \colon \V \longrightarrow\V$.
  \end{center}
\end{definition}

\begin{definition}
  Let $\V=\quantale$ be a quantale.
  \begin{enumerate}
  \item A \df{$\V$-category} is a pair $(X,a)$ consisting of a set $X$ and a map
    $a \colon X\times X \longrightarrow\V$ satisfying
    \begin{align*}
      k\leq a(x,x) && \text{and}  && a(x,y)\otimes a(y,z)\leq a(x,z),
    \end{align*}
    for all \(x,y,z\in X\). Furthermore, a \(\V\)-category \((X,a)\) is called
    \df{separated} whenever
    \begin{displaymath}
      (k\le a(x,y)\quad \text{and}\quad k\le a(y,x))\implies x=y,
    \end{displaymath}
    for all \(x,y\in X\).
  \item A \df{$\V$-functor} $f \colon (X,a) \longrightarrow (Y,b)$ between
    $\V$-categories is a map $f \colon X \longrightarrow Y$ such that
    \[
      a(x,x')\leq b(f(x),f(x')),
    \]
    for all \(x,x'\in X\).
  \item Finally, $\V$-categories and $\V$-functors define the category
    $\Cats{\V}$, and its full subcategory defined by separated \(\V\)-categories
    is denoted by \(\Cats{\V}_{\sep}\).
  \end{enumerate}
\end{definition}

We note that there is a canonical forgetful functor
\(\Cats{\V} \longrightarrow\SET\) sending the \(\V\)-category \((X,a)\) to the
set \(X\). For every \(\V\)-category \(X=(X,a)\), the \df{dual} \(\V\)-category
\(X^{\op}\) is defined as \(X^{\op}=(X,a^{\circ})\) where
\begin{displaymath}
  a^{\circ}(x,y)=a(y,x),
\end{displaymath}
for all \(x,y\in X\). In fact, this construction defines a functor
\((-)^{\op}\colon\Cats{\V} \longrightarrow\Cats{\V}\) commuting with the
forgetful functor to \(\SET\).

\begin{examples}\label{d:exs:1}
  Below we list some of the principal examples, for more details we refer, for
  instance, to \citep{HR18}.
  \begin{enumerate}
  \item The two element chain $\two =\{0\le 1\}$ with $\otimes = \&$ and
    \(k=1\). Then $\Cats{\two}\sim\ORD$.
  \item The extended real half line $\overleftarrow{[0,\infty]}$ ordered by the
    ``greater or equal'' relation $\geqslant$ and
    \begin{itemize}
    \item the tensor product given by addition $+$, denoted by $\Pp$;
    \item or with $\otimes = \max$, denoted as $\Pm$.
    \end{itemize}
    Then $\Cats{\Pp}\sim\MET$ is the category of (generalised) metric spaces and
    non-expansive maps and $\Cats{\Pm}\sim\UMET$ is the category of
    (generalised) ultrametric spaces and non-expansive maps.
  \item The unit interval $[0,1]$ with the ``greater or equal'' relation
    $\geqslant$ and the tensor $u\oplus v=\min\{1,u+v\}$, denoted as
    $\overleftarrow{[0,1]}_\oplus$. Then
    $\Cats{\overleftarrow{[0,1]}_\oplus}\sim\BMET$ is the category of
    (generalised) bounded-by-one metric spaces and non-expansive maps.
  \item The unit interval $[0,1]$ with the usual order $\leqslant$ and
    $\otimes=\wedge$ the minimum, or $\otimes=*$ the usual multiplication, or
    $\otimes=\odot$ the Lukasiewicz sum defined by $u\odot v=\max\{0,u+v-1\}$.
    Then \(\Cats{[0,1]_\wedge} \sim \UMET\), \(\Cats{[0,1]_*} \sim \MET\), and
    \(\Cats{[0,1]_\odot} \sim \BMET\).
  \end{enumerate}
\end{examples}

\begin{example}\label{d:ex:2}
  The notion of probabilistic metric space goes back to \citep{Men42}. Here a
  \df{probabilistic metric} on a set \(X\) is a map
  $d \colon X\times X\times[0,\infty] \longrightarrow [0,1]$, where $d(x,y,t)=u$
  means that \emph{$u$ is the probability that the distance from $x$ to $y$ is
    less then $t$}. Similar to a classic metric, such a map is required to
  satisfy the following conditions:
  \begin{enumerate}
    \setcounter{enumi}{-1}
  \item $d(x,y,-) \colon [0,\infty] \longrightarrow [0,1]$ is left continuous,
  \item\label{d:prob:refl} $d(x,x,t)=1$ for $t>0$,
  \item\label{d:prob:trans} $d(x,y,r)*d(y,z,s)\le d(x,z,r+s)$,
  \item $d(x,y,t)=1=d(y,x,t)$ for all $t>0$ implies $x=y$,
  \item $d(x,y,t)=d(y,x,t)$ for all $t$,
  \item $d(x,y,\infty)=1$.
  \end{enumerate}
  The complete lattice
  \begin{displaymath}
    \ddf = \{f \colon [0,\infty] \longrightarrow [0,1] \mid
    f(t)=\bigvee_{s<t}f(s)\text{ for all }t\in[0,\infty]\}
  \end{displaymath}
  becomes a quantale with multiplication
  \begin{displaymath}
    (f\otimes g) (t)=\bigvee_{r+s\leqslant t}f(r)*g(s),
  \end{displaymath}
  for \(f,g\in\ddf\), and unit the map
  \(\kappa \colon [0,\infty] \longrightarrow [0,1]\) with \(\kappa(0)=0\) and
  \(\kappa(t)=1\) for $t>0$. In the formula above, one may substitute the
  multiplication \(*\) by any other tensor
  \(\otimes \colon [0,1]\times [0,1] \longrightarrow [0,1]\).

  Then a probabilistic metric can be seen as a map
  $d \colon X\times X \longrightarrow\ddf$, and conditions (\ref{d:prob:refl})
  and (\ref{d:prob:trans}) read as
  \begin{equation*}
    \kappa\le d(x,x)\qquad\text{and}\qquad d(x,y)\otimes d(y,z)\leq d(x,z).
  \end{equation*}
  Hence $\Cats{\ddf}\sim\PROBMET$ is the category of (generalised) probabilistic
  metric spaces and non-expansive maps.
\end{example}

Before adding a topological component to the theory of \(\V\)-categories, we
collect some well-known properties of \(\V\)-categories and \(\V\)-functors.

\begin{theorem}\label{d:thm:8}
  The canonical forgetful functor \(\Cats{\V}\longrightarrow\SET\) is
  topological. Here a cone
  \((f_{i} \colon (X,a) \longrightarrow (X_{i},a_{i}))_{i\in I}\) in
  \(\Cats{\V}\) is initial respect to \(\Cats{\V}\longrightarrow\SET\) if and
  only if, for all \(x,y\in X\),
  \begin{displaymath}
    a(x,y)=\bigwedge_{i\in I}a_{i}(f_{i}(x),f_{i}(y)).
  \end{displaymath}
  Therefore \(\Cats{\V}\) has concrete limits and colimits and a (surjective,
  initial monocone)-factorisation system; moreover,
  \(\Cats{\V}\longrightarrow\SET\) has a right adjoint
  \(\SET\longrightarrow\Cats{\V}\) (indiscrete structures) and a left adjoint
  \(\ftD \colon\SET\longrightarrow\Cats{\V}\) (discrete
  structures). Furthermore, a morphism \(f \colon X\longrightarrow Y\) in
  \(\Cats{\V}\) is
  \begin{enumerate}
  \item a monomorphism if and and only if \(f\) is injective,
  \item a regular monomorphism if and only if \(f\) is an embedding with respect
    to \(\Cats{\V}\longrightarrow\SET\),
  \item an epimorphism if and only if \(f\) is surjective.
  \end{enumerate}
\end{theorem}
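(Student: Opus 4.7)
The plan is to prove topologicity directly by exhibiting the initial lift, then read off every other clause as a formal consequence of having a topological functor over $\SET$. Given a source $(f_{i}\colon X\to(X_{i},a_{i}))_{i\in I}$ of maps out of a set $X$, I would put $a(x,y)=\bigwedge_{i\in I}a_{i}(f_{i}(x),f_{i}(y))$ and verify the $\V$-category axioms: reflexivity $k\le a(x,x)$ holds because $k\le a_{i}(f_{i}(x),f_{i}(x))$ for every $i$; transitivity $a(x,y)\otimes a(y,z)\le a(x,z)$ holds since, for each fixed $i$, monotonicity of $\otimes$ (a consequence of the adjunction $u\otimes-\dashv\hom(u,-)$) gives $a(x,y)\otimes a(y,z)\le a_{i}(f_{i}(x),f_{i}(y))\otimes a_{i}(f_{i}(y),f_{i}(z))\le a_{i}(f_{i}(x),f_{i}(z))$. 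Each $f_{i}$ is a $\V$-functor by construction, and for any map $g\colon(Y,b)\to(X,a)$ the condition that every $f_{i}\circ g$ is a $\V$-functor translates, via the meet formula, into $b(y,y')\le a(g(y),g(y'))$, which is the universal property of the initial lift. This also forces the displayed biconditional characterisation of initial cones, since the initial lift is unique up to equality.

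Topologicity over $\SET$ then delivers the remaining structural claims. Limits are computed as the initial lift of the set-theoretic limit and colimits as the final lift of the set-theoretic colimit; in particular both exist and are concrete. The right adjoint sends $X$ to the indiscrete $\V$-category with $a\equiv\top$ (the initial lift of the empty source), and the left adjoint $\ftD$ sends $X$ to the discrete structure with $a(x,x)=k$ and $a(x,y)=\bot$ for $x\neq y$, which is a well-defined $\V$-category because $-\otimes v$ preserves suprema and hence $\bot\otimes v=\bot$. The (surjective, initial monocone)-factorisation is obtained by topologically lifting the (surjective, injective) factorisation of $\SET$: factor the underlying map(s) set-theoretically, then take the initial lift on the middle object.

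For the morphism characterisations, note that $U$ is faithful and, having both adjoints, preserves both limits and colimits; it therefore preserves and reflects monomorphisms and epimorphisms, whence $f$ is mono (resp.\ epi) in $\Cats{\V}$ iff its underlying map is injective (resp.\ surjective). It remains to identify regular monomorphisms with initial embeddings. One direction is routine: a regular monomorphism is in particular mono and thus injective; moreover it equalises some pair, so its domain carries the initial lift along the equaliser inclusion, and uniqueness of lifts forces this to agree with the given structure. For the converse, given an initial injective $f\colon X\to Y$, I would form its cokernel pair $g,h\colon Y\rightrightarrows Z$ in $\Cats{\V}$ and check that $f$ equalises $(g,h)$; since $U$ preserves colimits, the underlying set of $Z$ is $Y\sqcup_{X}Y$ and the set-theoretic equaliser of $(g,h)$ is $f(X)\hookrightarrow Y$. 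Because equalisers in $\Cats{\V}$ are initial lifts, this equaliser carries the initial structure induced by inclusion into $(Y,b)$, which by initiality of $f$ coincides with $(X,a)$. The most delicate step is this final identification: it is where the proof genuinely uses the interplay between initial lifts for limits and final lifts for colimits, and it is the point I would plan to write out most carefully.
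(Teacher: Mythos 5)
Your proposal is correct and is exactly the standard argument: the paper states this theorem without proof as a collection of well-known facts (cf.\ \citep{HST14, AHS90}), and your construction of the initial lift via the meet formula, the derivation of (co)limits, adjoints and the factorisation system from topologicity, and the identification of regular monomorphisms with initial injections via the cokernel pair is precisely how those references proceed. The one genuinely delicate step --- that the equaliser of the cokernel pair of an initial injection recovers the original object because equalisers are computed as initial lifts of the set-theoretic equaliser $f(X)\hookrightarrow Y$ --- is the right place to be careful, and your treatment of it is sound.
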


\begin{proposition}\label{d:prop:1}
  The \(\V\)-category \(\V=(\V,\hom)\) is injective with respect to embeddings
  and, for every \(\V\)-category \(X\), the cone
  \((f \colon X \longrightarrow \V)_{f}\) is initial with respect to the
  forgetful functor \(\Cats{\V} \longrightarrow\SET\).
\end{proposition}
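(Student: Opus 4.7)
The plan is to treat the two statements separately, both via the same enriched-Yoneda idea.

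For the initiality of the cone $(f\colon X\to\V)_{f}$, I would invoke Theorem~\ref{d:thm:8}: it suffices to prove
\[
  a(x,y)=\bigwedge_{f\colon X\to\V}\hom(f(x),f(y))
\]
for all $x,y\in X$. The inequality $\le$ is a direct restatement of the $\V$-functoriality of each $f$. For the reverse direction I would note that, for the fixed $x\in X$, the representable $a(x,-)\colon X\to\V$ is itself a $\V$-functor, since the triangle inequality $a(x,y)\otimes a(y,z)\le a(x,z)$ transposes under $\otimes\dashv\hom$ to $a(y,z)\le\hom(a(x,y),a(x,z))$. Evaluating the infimum at this particular choice then yields
\[
  \bigwedge_{g}\hom(g(x),g(y))\le\hom(a(x,x),a(x,y))\le\hom(k,a(x,y))=a(x,y),
\]
where the last estimate uses $k\le a(x,x)$ together with the antitonicity of $\hom$ in its first argument.

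For injectivity with respect to embeddings, let $i\colon(A,a)\hookrightarrow(B,b)$ be an embedding and $f\colon A\to\V$ a $\V$-functor. My proposal is to produce an extension $\tilde f\colon B\to\V$ via the right Kan-extension formula
\[
  \tilde f(z):=\bigwedge_{x\in A}\hom\bigl(b(z,i(x)),\,f(x)\bigr),
\]
and to verify (a) $\tilde f\circ i=f$ and (b) that $\tilde f$ is a $\V$-functor. Step (a) uses the identity $b(i(a),i(x))=a(a,x)$ afforded by the fact that $i$ is an embedding: the $\V$-functoriality of $f$ rewrites as $f(a)\le\hom(a(a,x),f(x))$ for every $x$, hence $f(a)\le\tilde f(i(a))$; the reverse inequality comes from evaluating the infimum at $x=a$ and using $k\le a(a,a)$. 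Step (b) asks for $b(z_{1},z_{2})\otimes\tilde f(z_{1})\le\tilde f(z_{2})$, which I would reduce pointwise to $b(z_{1},z_{2})\otimes\tilde f(z_{1})\otimes b(z_{2},i(x))\le f(x)$; this follows by first applying the triangle inequality $b(z_{1},z_{2})\otimes b(z_{2},i(x))\le b(z_{1},i(x))$ and then the defining estimate $\tilde f(z_{1})\otimes b(z_{1},i(x))\le f(x)$.

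The principal obstacle is purely notational: since $\hom$ is antitone in its first variable and monotone in its second, one must be careful about the slot in which the arguments of $a$ and $b$ sit---covariant representables $a(x,-)$ rather than $a(-,x)$ for the Yoneda step, and $b(z,i(x))$ rather than $b(i(x),z)$ in the Kan extension. Once this convention is fixed, both verifications amount to routine bookkeeping with the adjunction $\otimes\dashv\hom$, commutativity of $\otimes$, and the triangle inequality for $\V$-categories.
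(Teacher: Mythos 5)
Your proof is correct. The paper states this proposition without proof, listing it among ``well-known properties'' of \(\V\)-categories, so there is no in-text argument to compare against; your two steps are exactly the standard ones the authors implicitly rely on (cf.\ \citealp{Law73,HST14}): the covariant representables \(a(x,-)\), which are \(\V\)-functors into \((\V,\hom)\) by transposing the composition axiom under \(\otimes\dashv\hom\), witness the initiality criterion of Theorem~\ref{d:thm:8}, and the right Kan extension \(\tilde f(z)=\bigwedge_{x}\hom(b(z,i(x)),f(x))\) along the (fully faithful) embedding gives the required extension. All the individual estimates you record check out, including the use of \(\hom(k,-)=\mathrm{id}\) and the commutativity of \(\otimes\) needed to rearrange tensors in step (b).
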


\begin{remark}
  Since \((-)^{\op}\colon\Cats{\V} \longrightarrow\Cats{\V}\) is a concrete
  isomorphism, Proposition~\ref{d:prop:1} applies also to the \(\V\)-category
  \(\V^{\op}\) \emph{in lieu} of \(\V\).
\end{remark}

In the remainder of this section we assume that \emph{the lattice \(\V\) is
  completely distributive}, we refer to \citep{Woo04} for the definition and an
extensive discussion of properties of this notion. In particular, under this
assumption it is useful to consider the \df{totally below} relation \(\lll\) on
the lattice \(\V\), which is defined by \( v \lll u \) whenever
\[
  u \leq \bigvee A \implies v \in \downc A,
\]
for every subset \(A\) of \(\V\).

\begin{assumption}\label{d:ass:1}
  The underlying lattice of the quantale \(\V\) is completely distributive.
\end{assumption}

\begin{remark}\label{d:rem:2}
  Regarding the various topologies on \(\V\) we have the following facts, for
  more information see \citep{GHK+03}.
  \begin{enumerate}
  \item \label{p:1} The Lawson topology on the completely distributive lattice
    \(\V\) is compact Hausdorff. With respect to this topology, as shown in
    \citep[Proposition~VII-3.10]{GHK+03}, an ultrafilter $\fv$ in \(\V\)
    converges to
    \begin{displaymath}
      \xi(\fv)=\bigwedge_{A\in\fv}\bigvee A \in \V.
    \end{displaymath}
    Moreover, the Scott topology respectively its dual topology have the
    following convergences:
    \begin{quote}
      \begin{tabbing}
        Scott topology: \hspace{5em} \= \(\fv \to x \iff \xi(\fv)\ge x\),\\
        Dual of Scott topology: \> \(\fv \to x \iff \xi(\fv)\le x\).\\
      \end{tabbing}
    \end{quote}
  \item By \citep[Lemma~VII-2.7]{GHK+03} and
    \citep[Proposition~VII-2.10]{GHK+03}, the Lawson topology of $\V$ coincides
    with the Lawson topology of $\V^\op$, and the set
    \begin{equation*}
      \{ \upc u \mid u \in \V \} \cup \{ \downc u \mid u \in \V \}
    \end{equation*}
    is a subbasis for the closed sets of this topology which is known as the
    interval topology.
  \item\label{d:item:1} The sets
    \begin{displaymath}
      \upc v=\{u\in\V\mid v\le u\}\qquad (v\in\V)
    \end{displaymath}
    form a subbase for the closed sets of the dual of the Scott topology of $\V$
    (see \citep[Proposition~VI-6.24]{GHK+03}). We denote (the convergence of)
    this topology by \(\xi_{\le}\).
  \item The convergence \(\xi \colon \ftU\V \longrightarrow\V\) together with
    the ultrafilter monad \(\mU=\umonad\) and the quantale \(\V\) defines a
    topological theory in the sense of \citep{Hof07}, and therefore allows for
    an extension of the ultrafilter monad $\mU$ to a monad on \(\Cats{\V}\) (see
    \citep{Tho09}). We denote the corresponding Eilenberg--Moore category
    \(\Cats{\V}^{\mU}\) by $\CatCHs{\V}$, and refer to its objects as
    \df{\(\V\)-categorical compact Hausdorff spaces} (see also \citep{HR18}). In
    more detail, a $\V$-categorical compact Hausdorff space is a triple
    $(X,a,\alpha)$ where
    \begin{itemize}
    \item $(X,a)$ is a $\V$-category and
    \item $\alpha \colon UX \longrightarrow X$ is the convergence of a compact
      Hausdorff topology on $X$ such that
      $\alpha \colon (UX,Ua) \longrightarrow (X,a)$ is a $\V$-functor.
    \end{itemize}
  \end{enumerate}
\end{remark}

\begin{example}
  The triple \(\V=(\V,\hom,\xi)\) is a $\V$-categorical compact Hausdorff
  space. Moreover, for a \(\V\)-categorical compact Hausdorff space
  \(X=(X,a,\alpha)\), also \(X^{\op}=(X,a^{\circ},\alpha)\) is a
  \(\V\)-categorical compact Hausdorff space.
\end{example}

\begin{example}
  As it is pointed out in \citep{Tho09}, \(\two\)-categorical compact Hausdorff
  spaces are precisely \citeauthor{Nac65}'s ordered compact Hausdorff spaces.
\end{example}

\begin{proposition}\label{p:0}
  For a quantale \(\V\), the sets
  \begin{equation*}
    \{ u \in \V \mid v \lll u \} \qquad  (v \in \V)
  \end{equation*}
  form a subbase for its Scott topology.
\end{proposition}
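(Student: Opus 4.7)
The plan is to show both inclusions between the topology generated by the family $\{\upc v \mid v\in\V\}$ (where I read $\upc v$ as the shorthand for $\{u\in\V\mid v\lll u\}$) as a subbase and the Scott topology on $\V$. The key input is that $\V$ is completely distributive, which (by the characterisation one finds in~\citep{Woo04}) ensures that the relation $\lll$ satisfies the two crucial properties
\begin{equation*}
  u=\bigvee\{v\in\V\mid v\lll u\}\qquad\text{and}\qquad v\lll u\ \Longrightarrow\ \exists w\in\V:\ v\lll w\lll u,
\end{equation*}
that is, the totally below relation is approximating and interpolative.

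First I would verify that each set $\upc v=\{u\mid v\lll u\}$ is Scott open. Upward closure is immediate from the fact that $\lll$ is downward compatible with $\leq$ in its right argument. For the inaccessibility condition, suppose $v\lll\bigvee D$ for a directed set $D$. Here is where interpolation is essential: pick $w$ with $v\lll w\lll\bigvee D$. Applying the definition of $\lll$ to the set $A=D$, we obtain $d\in D$ with $w\leq d$; then $v\lll w\leq d$ yields $v\lll d$ by monotonicity, so $d\in\upc v\cap D$, proving Scott openness.

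Second I would show that every Scott open set $U$ is a union of finite intersections of such subbasic sets. Fix $u\in U$. Since $u=\bigvee\{v\mid v\lll u\}$, the collection
\begin{equation*}
  D=\bigl\{v_1\vee\cdots\vee v_n\bigm| n\in\mathbb{N},\ v_1,\dots,v_n\lll u\bigr\}
\end{equation*}
is directed with supremum $u\in U$, so Scott openness of $U$ produces finitely many $v_1,\dots,v_n\lll u$ such that $w_0:=v_1\vee\cdots\vee v_n$ lies in $U$. Then on the one hand $u\in\bigcap_{i=1}^n\upc v_i$, and on the other hand, for every $w\in\bigcap_{i=1}^n\upc v_i$ we have $v_i\leq w$ for each $i$, hence $w_0\leq w$; since $U$ is upward closed, $w\in U$. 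This exhibits the required basic open neighbourhood $\bigcap_{i=1}^n\upc v_i\subseteq U$ of $u$, completing the argument.

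The only delicate step is the first one, where interpolation of $\lll$ is indispensable; without it one only gets $v\leq d$ for some $d\in D$, which does not suffice to place $d$ inside $\upc v$. Everything else is essentially bookkeeping using that the finite‐join closure of $\{v\mid v\lll u\}$ is directed with supremum $u$, and that $U$ is an upper set.
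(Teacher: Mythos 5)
Your proof is correct, and its skeleton matches the paper's: both arguments prove the two inclusions separately, using interpolation of the totally below relation to show that each set $\{u\in\V\mid v\lll u\}$ is Scott open, and the approximating property $u=\bigvee\{v\in\V\mid v\lll u\}$ for the reverse inclusion. Where you differ is in how Scott openness is tested. The paper argues entirely through ultrafilter convergence: a subbasic set is shown to meet every member of any ultrafilter $\fv$ with $v\lll u\le\xi(\fv)=\bigwedge_{A\in\fv}\bigvee A$, and conversely any ultrafilter containing all the subbasic sets around a point $w$ is shown to satisfy $w\le\xi(\fv)$; this phrasing plugs directly into the convergence-theoretic description of the topologies on $\V$ in Remark~\ref{d:rem:2}, which the rest of the section relies on. You instead work with the order-theoretic definition of Scott open (upper and inaccessible by directed suprema) and, in the second half, apply Scott openness of $U$ to the directed family of finite joins of elements totally below $u$; this is more elementary and yields an explicit basic neighbourhood $\{u'\mid v_1\lll u'\}\cap\dots\cap\{u'\mid v_n\lll u'\}\subseteq U$, rather than merely an identification of convergences. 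Two small remarks: your shorthand $\upc v$ clashes with the paper's $\upc v=\{u\in\V\mid v\le u\}$ and should be renamed; and in the degenerate case where $u=\bot$ has nothing totally below it, your directed family should be taken to include the empty join (equivalently, one uses the empty intersection $\V$ as the basic neighbourhood, which is harmless since an upper set containing $\bot$ is all of $\V$).
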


\begin{proof}
  We start by proving that for every \(v \in \V\) the set
  \(\{ u \in \V \mid v \lll u \}\) is open. Let \(\fv\) be an ultrafilter in
  \(\V\) that converges to \(u\in\V\) such that \(v \lll u \).  The properties
  of the totally below relation guarantee that there exists \(w \in \V\) such
  that \(v \lll w \lll u \). Then, by Remark~\ref{d:rem:2}~(\ref{p:1}), for
  every \(A \in \fv\), \(u \leq \bigvee A \). Hence, for every \(A \in \fv \)
  there exists \(a \in A\) such that \(w \leq a\).  Therefore, for every
  \(A \in \fv\),
  \[
    A \cap \{ u \in \V \mid v \lll u \} \neq \varnothing.
  \]

  We show now that the sets \(\{ u \in \V \mid v \lll u \}\) \((v\in\V)\) induce
  the convergence of the Scott topology. Let \(w\) be an element of \(\V\) and
  \(\fv\) and ultrafilter on \(\V\) such that, for every \(v \lll w\) in \(\V\),
  the set \(\{ u \in \V \mid v \lll u \}\) belongs to \(\fv\). Then, since
  \(\V\) is completely distributive, we have
  \[
    w = \bigvee_{v \lll w} v \leq \bigvee_{A \in \fv} \bigwedge A =
    \xi(\fv).\qedhere
  \]
\end{proof}

\begin{remark}
  For a point-separating cone
  \((f_{i} \colon (X,a,\alpha) \longrightarrow (X_{i},a_{i},\alpha_{i}))_{i\in
    I}\) in \(\CatCHs{\V}\), the following assertions are equivalent, for
  details see \citep{Tho09}.
  \begin{tfae}
  \item For all \(x,y\in X\),
    \(\displaystyle{a(x,y)=\bigwedge_{i\in I}a_{i}(f_{i}(x),f_{i}(y))}\).
  \item
    \((f \colon (X,a,\alpha) \longrightarrow (X_{i},a_{i},\alpha_{i}))_{i\in
      I}\) is initial with respect to \(\CatCHs{\V}\longrightarrow\COMPHAUS\).
  \item
    \((f \colon (X,a,\alpha) \longrightarrow (X_{i},a_{i},\alpha_{i}))_{i\in
      I}\) is initial with respect to \(\CatCHs{\V}\longrightarrow\SET\).
  \end{tfae}
  In the sequel we will simply say ``initial'' when referring to either of these
  forgetful functors. We also note that a cone
  \((f_{i} \colon (X,a,\alpha) \longrightarrow (X_{i},a_{i},\alpha_{i}))_{i\in
    I}\) is point-separating if and only if it is a monocone in \(\CatCHs{\V}\).
\end{remark}

\begin{theorem}
  The category \(\CatCHs{\V}\) is monadic over \(\Cats{\V}\) and topological
  over \(\COMPHAUS\), hence \(\CatCHs{\V}\) is complete and cocomplete and has a
  (surjective, initial monocone)-factorisation system.
\end{theorem}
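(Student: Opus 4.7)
The plan is to separate the two assertions and then derive the consequences from standard results on topological functors. Monadicity over $\Cats{\V}$ is immediate from the definition: by construction $\CatCHs{\V}$ is the Eilenberg--Moore category $\Cats{\V}^{\mU}$ of the monad $\mU$ obtained from the topological theory $(\mU,\V,\xi)$, as recalled in Remark~\ref{d:rem:2}~(\ref{d:item:1}).

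For topologicity over $\COMPHAUS$, first note that the forgetful functor $\ftU\colon\CatCHs{\V}\longrightarrow\COMPHAUS$ sending $(X,a,\alpha)$ to $(X,\alpha)$ is faithful, since both $\CatCHs{\V}\longrightarrow\SET$ and $\COMPHAUS\longrightarrow\SET$ are faithful and the first factors through $\ftU$. To construct initial lifts, consider a structured source
\[
  (f_i\colon(X,\alpha)\longrightarrow(X_i,a_i,\alpha_i))_{i\in I}
\]
in $\COMPHAUS$, and equip $X$ with the initial $\V$-category structure guaranteed by Theorem~\ref{d:thm:8}, namely
\[
  a(x,y)=\bigwedge_{i\in I}a_i(f_i(x),f_i(y)).
\]
The only nontrivial point is that $\alpha\colon (\ftU X,\ftU a)\longrightarrow (X,a)$ is again a $\V$-functor. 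Since $a\le a_i\circ(f_i\times f_i)$ as $\V$-relations, monotonicity of the lax extension of the ultrafilter functor to $\V$-relations gives $\ftU a(\mathfrak{x},\mathfrak{y})\le \ftU a_i(\ftU f_i(\mathfrak{x}),\ftU f_i(\mathfrak{y}))$; combining this with the $\V$-functoriality of each $\alpha_i$ and with continuity of $f_i$ (which yields $\alpha_i\circ \ftU f_i=f_i\circ\alpha$), one obtains
\[
  \ftU a(\mathfrak{x},\mathfrak{y})\le a_i\bigl(f_i(\alpha(\mathfrak{x})),f_i(\alpha(\mathfrak{y}))\bigr)
\]
for every $i\in I$, and taking the infimum over $i$ yields the required inequality $\ftU a(\mathfrak{x},\mathfrak{y})\le a(\alpha(\mathfrak{x}),\alpha(\mathfrak{y}))$. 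Hence $(X,a,\alpha)\in\CatCHs{\V}$, each $f_i$ is a morphism there, and initiality with respect to $\ftU$ is forced by the explicit formula together with the corresponding initiality property in $\Cats{\V}$.

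The remaining claims are then consequences of standard facts about topological functors: since $\COMPHAUS$ is complete and cocomplete, so is $\CatCHs{\V}$, and the (surjective, injective) factorisation system on $\COMPHAUS$ lifts along the topological functor $\ftU$ to a (surjective, initial monocone) factorisation system on $\CatCHs{\V}$.

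The main obstacle I anticipate is the verification that the $\V$-functor law for $\alpha$ is preserved when forming the pointwise infimum structure on $X$; this hinges on the monotonicity of the lax extension of $\ftU$ to $\V$-relations and on its compatibility with composition by ordinary maps, which are precisely the ingredients of the topological-theory framework of \citep{Hof07,Tho09} already invoked above.
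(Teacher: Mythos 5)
Your proof is correct and supplies exactly the argument that the paper outsources to \citep{Tho09}: monadicity over \(\Cats{\V}\) is definitional for an Eilenberg--Moore category, and your explicit initial-lift construction over \(\COMPHAUS\) --- infimum structure on \(X\), then the whiskering/monotonicity properties of the lax extension of \(\ftU\) to \(\V\)-relations combined with \(\alpha_i\cdot\ftU f_i=f_i\cdot\alpha\) --- is the standard verification of topologicity, from which completeness, cocompleteness and the lifted (surjective, initial monocone)-factorisation follow as you say. The only blemish is the pointer to Remark~\ref{d:rem:2}~(\ref{d:item:1}), which refers to the dual of the Scott topology rather than to the item of that remark introducing the extended ultrafilter monad on \(\Cats{\V}\).
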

\begin{proof}
  See \citep{Tho09}.
\end{proof}

\begin{definition}
  A \(\V\)-categorical compact Hausdorff space $X$ is called \df{Priestley}
  whenever the cone $\CatCHs{\V}(X,\V^\op)$ is point-separating and initial with
  respect to $\CatCHs{\V} \longrightarrow\COMPHAUS$.
\end{definition}

\begin{example}
  For \(\V=\two\), the notion of Priestley space coincides with the classical
  one.
\end{example}

\begin{remark}
  By definition, the \(\V\)-categorical compact Hausdorff space \(\V^{\op}\) is
  Priestley. Moreover, every finite separated \(\V\)-categorical compact
  Hausdorff space is Priestley.
\end{remark}

We denote the full subcategory of $\CatCHs{\V}$ defined by all Priestley spaces
by $\Priests{\V}$. Due to well-known facts about factorisation structures for
cones (see \citep{AHS90}), we have the following:

\begin{proposition}
  The full subcategory \( \Priests{\V}\) of \(\CatCHs{\V} \) is reflective.
\end{proposition}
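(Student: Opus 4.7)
The plan is to construct, for each object $X \in \CatCHs{\V}$, the reflection map by applying the (surjective, initial monocone)-factorisation system to the hom-cone into the dualising object $\V^{\op}$. Concretely, I would factor
\[
\bigl(f\colon X\longrightarrow\V^{\op}\bigr)_{f\in\CatCHs{\V}(X,\V^{\op})}
\]
as a surjection $\eta_X\colon X\longrightarrow rX$ followed by an initial monocone $(\tilde f\colon rX\longrightarrow\V^{\op})_{f}$ with $\tilde f\circ\eta_X=f$ for each $f$, and then take $\eta_X$ as the reflection unit.

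First I would check that $rX$ lies in $\Priests{\V}$. The family $(\tilde f)_{f}$ is a subcone of $\CatCHs{\V}(rX,\V^{\op})$, and both initiality and point-separation transfer from a subcone to any containing cone: point-separation trivially, and initiality because any additional morphism $h\colon rX\longrightarrow\V^{\op}$ contributes a term already bounded below by the structure of $rX$, so the meet characterising the structure (as in Theorem~\ref{d:thm:8}) remains equal to it. Hence $\CatCHs{\V}(rX,\V^{\op})$ itself is a point-separating initial cone, and $rX$ is Priestley.

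For the universal property I would invoke the diagonalisation property intrinsic to any $(E,M)$-factorisation structure for cones. Given $g\colon X\longrightarrow Y$ with $Y\in\Priests{\V}$, every $h\in\CatCHs{\V}(Y,\V^{\op})$ produces a morphism $h\circ g\in\CatCHs{\V}(X,\V^{\op})$, which factors as $\widetilde{h\circ g}\circ\eta_X$. Since $Y$ is Priestley, $\CatCHs{\V}(Y,\V^{\op})$ is itself an initial monocone, so the diagonal fill-in applied to $\eta_X$ against this M-cone yields a unique $\bar g\colon rX\longrightarrow Y$ with $\bar g\circ\eta_X=g$. Uniqueness also follows directly from $\eta_X$ being epi.

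The main subtlety I anticipate is that the cone $\CatCHs{\V}(X,\V^{\op})$ may be large, so the existence of its factorisation is not a priori automatic. This is handled by the fact that $\CatCHs{\V}$ is topological over $\COMPHAUS$ and therefore co-well-powered with respect to surjective quotients; the general cone-factorisation theorems in \citep{AHS90} then apply and deliver the factorisation used above, completing the argument.
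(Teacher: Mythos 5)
Your proposal is correct and follows essentially the same route as the paper: the reflection is obtained by the (surjective, initial monocone)-factorisation of the cone of all morphisms $X\longrightarrow\V^{\op}$, and the universal property follows from the diagonalisation property against the point-separating initial cone on a Priestley codomain. Your explicit checks that the factorised object is Priestley and that the size issue is handled by the factorisation structure for cones are welcome details the paper leaves implicit, but they do not change the argument.
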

We denote the left adjoint of the inclusion functor
\(\Priests{\V} \longrightarrow\CatCHs{\V}\) by
$\pi_0 \colon \CatCHs{\V}\longrightarrow\Priests{\V}$.
\begin{proof}
  For each $X$ in $\CatCHs{\V}$, its reflection $X \longrightarrow\pi_{0}(X)$
  into $\Priests{\V}$ is given by the (surjective, initial
  monocone)-factorisation of the cone
  $(\varphi \colon X \longrightarrow \V^{\op})_{\varphi}$ of all morphisms from
  $X$ to $\V^{\op}$ in $\CatCHs{\V}$.
  \begin{equation*}
    \begin{tikzcd}
      X \ar[r,->>] \ar[rr,"\varphi",bend left] %
      & \pi_0(X) \ar[r,"\widetilde{\varphi}"'] %
      & {\V^{\op}}
    \end{tikzcd}
  \end{equation*}
  To show that this construction defines indeed a left adjoint to
  $\Priests{\V} \longrightarrow\CatCHs{\V}$, consider
  $f \colon X \longrightarrow Y$ in $\CatCHs{\V}$ where $Y$ is Priestley. Then,
  for every $\varphi \colon Y \longrightarrow \V^{\op}$, there is some arrow
  $\widetilde{\varphi}\colon\pi_{0}(X) \longrightarrow \V^{\op}$ making the
  diagram
  \begin{equation}
    \label{d:eq:4}
    \begin{tikzcd}
      X \ar[r,->>]\ar[d,"f"'] & \pi_{0}(X)
      \ar[d,"\widetilde{\varphi}"] \\
      Y \ar[r,"\varphi"'] & {\V}^{\op}
    \end{tikzcd}
  \end{equation}
  commute. Since the top arrow of \eqref{d:eq:4} is surjective and the cone
  $(\varphi \colon Y \longrightarrow \V^{\op})_{\varphi}$ is point-separating
  and initial, there is a diagonal arrow
  $\bar{f}\colon\pi_{0}(X) \longrightarrow Y$ in \eqref{d:eq:4} making in
  particular the diagram
  \begin{equation*}
    \begin{tikzcd}
      X \ar[r,->>]\ar[d,"f"'] & \pi_{0}(X)\ar[dl,"\bar{f}"]\\
      Y
    \end{tikzcd}
  \end{equation*}
  commute.
\end{proof}

\begin{corollary}\label{d:cor:1}
  The category \(\Priests{\V}\) is complete and cocomplete.
\end{corollary}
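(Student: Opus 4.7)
The plan is to appeal to the classical fact that every full reflective subcategory of a complete and cocomplete category inherits both limits and colimits. The theorem preceding the definition of a Priestley space establishes that $\CatCHs{\V}$ is complete and cocomplete, and the proposition above exhibits $\Priests{\V}$ as a full reflective subcategory of $\CatCHs{\V}$ with reflector $\pi_0$, so the corollary should follow essentially for free.

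For cocompleteness, given a diagram $D \colon J \longrightarrow \Priests{\V}$, I would compute its colimit $(\iota_j \colon D(j) \longrightarrow C)_{j \in J}$ in the ambient category $\CatCHs{\V}$ and then post-compose with the reflection $C \longrightarrow \pi_0(C)$. A routine diagram chase, using the universal property of the colimit together with the universal property of the reflection (every morphism from $C$ to a Priestley space factors uniquely through $\pi_0(C)$), shows that the resulting cocone is a colimit of $D$ in $\Priests{\V}$.

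For completeness, I would show the stronger statement that $\Priests{\V}$ is closed under limits in $\CatCHs{\V}$. Given a limit cone $(p_j \colon L \longrightarrow D(j))_{j\in J}$ of a diagram $D \colon J \longrightarrow \Priests{\V}$, I would verify that the cone $\CatCHs{\V}(L,\V^{\op})$ is point-separating and initial. Point-separation is straightforward: the $p_j$ are jointly monic, and each $D(j)$ admits a point-separating family of morphisms into $\V^{\op}$, so the composites $\varphi \circ p_j$ separate points of $L$. For initiality, the key observation is that the sub-family $\{\varphi \circ p_j \mid j \in J,\ \varphi \in \CatCHs{\V}(D(j),\V^{\op})\}$ is already initial, because initial cones compose: the limit cone $(p_j)$ is initial, and each cone $(\varphi \colon D(j) \longrightarrow \V^{\op})_{\varphi}$ is initial by the assumption that $D(j)$ is Priestley.

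The main (mild) obstacle is purely bookkeeping: one must be careful about which forgetful functor one is checking initiality against, and for this I would invoke the equivalence of the three formulations of initiality recorded in the remark preceding the definition of a Priestley space. Once this is settled, the composite cone -- and hence $\CatCHs{\V}(L,\V^{\op})$ as a whole -- is initial with respect to $\CatCHs{\V} \longrightarrow \COMPHAUS$, so $L$ is Priestley by definition. Thus the limit computed in $\CatCHs{\V}$ already lives in $\Priests{\V}$, which completes the proof.
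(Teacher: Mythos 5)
Your proposal is correct and matches the route the paper intends: the corollary is stated as an immediate consequence of the preceding proposition (reflectivity of \(\Priests{\V}\) in the complete and cocomplete category \(\CatCHs{\V}\)), which is exactly the standard fact you invoke and then verify by hand. Your direct check that \(\Priests{\V}\) is closed under limits via composition of initial point-separating cones is a sound way of making the "limits are inherited" half explicit, and the reflection of ambient colimits handles the other half as usual.
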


We already observed in \citep[Remark~4.52]{HN20} that a monocone in
\(\Priests{\V}\) is initial with respect to \(\Priests{\V} \longrightarrow\SET\)
if and only if it is initial with respect to $\CatCHs{\V} \longrightarrow\SET$
(the same argument as in the proof of \citep[Theorem~A.6]{HN20} applies
here). At this moment we do not know whether, for instance, every separated
metric compact Hausdorff space is Priestley. However, since $[0,1]^\op$ is an
initial cogenerator in $\POSCH$ (see \citep{Nac65}), we have the following fact.

\begin{proposition}
  The inclusion functor $\POSCH \longrightarrow\CatCHs{[0,1]}$ corestricts to
  $\POSCH \longrightarrow\Priests{[0,1]}$.
\end{proposition}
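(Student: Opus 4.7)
The plan is to take an arbitrary $X\in\POSCH$, realise it as an object of $\CatCHs{[0,1]}$ via the canonical embedding induced by the subquantale $\two\hookrightarrow[0,1]$ (which equips $(X,\le,\alpha)$ with the $\{0,1\}$-valued $[0,1]$-enrichment $a(x,y)=1$ if $x\le y$ and $0$ otherwise), and then verify the Priestley condition by hand: namely, that the cone $\CatCHs{[0,1]}(X,[0,1]^\op)$ is point-separating and initial.

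The first substantive step is to identify the morphism set. Writing $\hom$ for the internal hom of the quantale $[0,1]$, a continuous map $f\colon X\to[0,1]$ is a morphism $X\to[0,1]^\op$ in $\CatCHs{[0,1]}$ precisely when $a(x,y)\le \hom(f(y),f(x))$ for all $x,y\in X$. Since $\hom(v,u)=1$ if and only if $v\le u$, this condition is vacuous when $a(x,y)=0$ and reduces to $f(y)\le f(x)$ when $x\le y$. Hence $\CatCHs{[0,1]}(X,[0,1]^\op)$ coincides with $\POSCH(X,[0,1]^\op)$, i.e., with the continuous antitone maps $X\to[0,1]$.

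Next, since $[0,1]^\op$ is an initial cogenerator in $\POSCH$ (Nachbin), the cone $\POSCH(X,[0,1]^\op)$ is point-separating, which transfers verbatim to $\CatCHs{[0,1]}$. For initiality in $\CatCHs{[0,1]}$, by the criterion recorded in the remark preceding the definition of Priestley objects, I must check
\begin{equation*}
  a(x,y) \;=\; \bigwedge_{f\in \CatCHs{[0,1]}(X,[0,1]^\op)} \hom(f(y),f(x))
\end{equation*}
for all $x,y\in X$. If $x\le y$ then both sides equal $1$: the antitone condition forces $f(y)\le f(x)$ hence $\hom(f(y),f(x))=1$ for every $f$ in the cone.

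The main step, and the only one requiring real work, is the case $x\not\le y$, where $a(x,y)=0$ and I must produce enough antitone continuous functions to drive the infimum down to $0$. Here I apply the Urysohn--Nachbin separation theorem to the disjoint closed sets $\upc x$ and $\downc y$ (they are closed since the partial order on a partially ordered compact space is closed in $X\times X$, and disjoint since $z\in \upc x\cap \downc y$ would force $x\le y$). This yields a continuous monotone $g\colon X\to[0,1]$ with $g(x)=1$ and $g(y)=0$; setting $f=1-g$ gives a continuous antitone map with $f(x)=0$ and $f(y)=1$, so that $\hom(f(y),f(x))=\hom(1,0)=0$, which holds for each of the quantale structures $\wedge$, $*$, $\odot$ considered in the paper. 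The infimum is therefore $0$, matching $a(x,y)$. This completes the verification that $X$ is Priestley in $\CatCHs{[0,1]}$, and hence that the inclusion corestricts as claimed.
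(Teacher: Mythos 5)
Your proof is correct and is essentially the paper's argument: the paper justifies the corestriction in one line by citing that $[0,1]^\op$ is an initial cogenerator in $\POSCH$ (Nachbin), and your write-up simply fills in the details of why that yields the Priestley condition. In particular, you correctly isolate the one point that needs care --- that mere order-reflection would only give $\hom(f(y),f(x))<1$ for some $f$ when $x\not\le y$, whereas the enriched initiality $a(x,y)=\bigwedge_f\hom(f(y),f(x))=0$ requires the full Urysohn--Nachbin separation producing $f(x)=0$, $f(y)=1$ --- and your computation $\hom(1,0)=0$ holds for any quantale structure on $[0,1]$ with unit $1=\top$, not just the three you list.
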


\section{Duality theory for enriched Priestley spaces: concretely}
\label{sec:dual-theory-enrich-conc}

In this section we build on the duality results of \citep{HN18} for Priestley
spaces enriched in the complete lattice $[0,1]$ with a continuous quantale
structure $\otimes \colon [0,1]\times [0,1] \longrightarrow [0,1]$ and with
neutral element $1$. We recall some of the principal results, and then, for the
\L{}ukasiewicz tensor on \([0,1]\), show how to restrict the \(\V\)-relational
duality results obtained in \citep[Section~9]{HN18} to categories of functions.

In analogy with the classical situation, our starting point is the category
$\FinSups{[0,1]}$ of finitely cocomplete $[0,1]$-categories and $[0,1]$-functors
that preserve finite weighted colimits.

\begin{theorem}
  The category $\FinSups{[0,1]}$ is a $\aleph_1$-ary quasivariety.
\end{theorem}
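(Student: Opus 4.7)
The plan is to invoke Theorem~\ref{d:thm:6} by exhibiting an $\aleph_1$-presentable regular projective regular generator of the cocomplete category $\FinSups{[0,1]}$.

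The first step is to present $\FinSups{[0,1]}$ as monadic over $\SET$. A finitely cocomplete $[0,1]$-category carries, besides an initial object, an operation of arity $|J|$ for each finite $[0,1]$-categorical shape $J$ and each weight $W\colon J^{\op}\longrightarrow[0,1]$, sending a diagram $D\colon J\longrightarrow X$ to its weighted colimit $\mathrm{colim}^{W}D$; in particular one recovers binary joins and all tensors $u\otimes(-)$ with $u\in[0,1]$. Morphisms in $\FinSups{[0,1]}$ are precisely the maps respecting these operations. The set of operations is parametrised by continuum-size data, but crucially \emph{every} operation has finite arity, so the corresponding monad $T$ on $\SET$ is finitary. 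Hence $\FinSups{[0,1]}\simeq\SET^{T}$ is cocomplete and the forgetful $U\colon\FinSups{[0,1]}\longrightarrow\SET$ creates all filtered colimits.

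For the generator I would take $G:=F(1)$, the free $T$-algebra on one element, so that $\hom_{\FinSups{[0,1]}}(G,-)\cong U$. Three properties then need verification. \emph{Regular generator}: for any $A$, the counit $\varepsilon_{A}\colon F(U A)\longrightarrow A$ is a regular epimorphism with domain a coproduct of copies of $G$. \emph{Regular projective}: in a category monadic over $\SET$, regular epimorphisms coincide with morphisms surjective on underlying sets, and $U$ evidently preserves surjections, so $\hom(G,-)\cong U$ preserves regular epis. \emph{$\aleph_{1}$-presentable}: since $T$ is finitary, $U$ preserves all filtered colimits, in particular $\aleph_{1}$-filtered ones; thus $G$ is even finitely presentable. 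Theorem~\ref{d:thm:6} then delivers the equivalence with a quasivariety, and because $G$ is $\aleph_{1}$-presentable the quasivariety is $\aleph_{1}$-ary.

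The main obstacle is the first step: rigorously identifying $\FinSups{[0,1]}$ with $\SET^{T}$ for a finitary monad $T$. One must check that the theory of finite weighted colimits, together with the $[0,1]$-category axioms (which involve the continuum of hom-values), can be encoded by operations of finite arity and equations between them, and that $[0,1]$-functors preserving finite weighted colimits are precisely the $T$-algebra homomorphisms. A secondary, but standard, point is the identification of regular epimorphisms with surjections, achieved by the usual construction of coequalizers in the Eilenberg--Moore category of a finitary $\SET$-monad. Once these foundational identifications are in place, the generator $G$ and the appeal to Theorem~\ref{d:thm:6} complete the proof with essentially no further calculation.
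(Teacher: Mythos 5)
Your argument stands or falls with the claim that $\FinSups{[0,1]}$ is $\SET^{T}$ for a \emph{finitary} monad $T$, and that claim is false --- not a deferrable technicality, but the precise point where the $\aleph_{1}$ in the statement comes from. The operations are indeed finitary (they reduce to $\bot$, binary $\vee$ and the tensors $u\otimes(-)$ for $u\in[0,1]$), but the algebras in this signature that actually underlie finitely cocomplete $[0,1]$-categories are cut out by the requirement that, for all $x,y$, the down-set $\{u\in[0,1]\mid u\otimes x\le y\}$ contains its supremum; this is what makes $a(x,y)=\max\{u\mid u\otimes x\le y\}$ well defined and hence makes the tensors genuine copowers. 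Via a countable dense subset of $[0,1]$ this becomes an implication with countably many premises, and it cannot be replaced by finitary (quasi-)equations: the forgetful functor $\FinSups{[0,1]}\longrightarrow\SET$ does not preserve filtered colimits. Concretely, let $L_{n}$ be the free finitely cocomplete $[0,1]$-category on the two-object $[0,1]$-category $\{x,y\}$ with $a_{n}(x,y)=1-1/n$ and $a_{n}(y,x)=0$. In the colimit of the chain $L_{1}\to L_{2}\to\cdots$ in $\FinSups{[0,1]}$ one has $a(x,y)=1$, hence $x\vee y=y$, while $x\vee y\neq y$ at every finite stage; so the underlying set of the colimit is not the colimit of the underlying sets, the induced monad (if any) is not finitary, and $F(1)$ is not finitely presentable. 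Your conclusion ``$G$ is even finitely presentable'' therefore cannot stand. A second, independent problem is the assumed equivalence with $\SET^{T}$: a quasivariety that is not a variety is in general \emph{not} monadic over $\SET$ (torsion-free abelian groups are the standard example: the induced monad is the free-abelian-group monad, whose Eilenberg--Moore category is all abelian groups), and nothing in your argument shows that $\FinSups{[0,1]}$ is a variety.

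For comparison, the paper proves the statement by citing \citep[Remark~2.10]{HN18}, where $\FinSups{[0,1]}$ is exhibited directly as the category of models of an implicational theory with the finitary operations above and countably-ary implications of the kind just described; the rank $\aleph_{1}$ is read off from the presentation rather than obtained through Theorem~\ref{d:thm:6}. Your strategy --- produce an $\aleph_{1}$-presentable regular projective regular generator and invoke the ranked version of Theorem~\ref{d:thm:6} --- is viable in principle, but then cocompleteness, the identification of regular epimorphisms, regular projectivity of $F(1)$ and its $\aleph_{1}$-presentability all have to be established without the crutch of finitary monadicity, which in practice amounts to reconstructing the explicit $\aleph_{1}$-ary presentation anyway.
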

\begin{proof}
  See \citep[Remark~2.10]{HN18}.
\end{proof}

In the sequel we consider the Vietoris monad \(\mH=\hmonad\) on the category
\(\POSCH\) of partially ordered compact Hausdorff spaces and monotone continuous
maps, more information on power constructions in topology can be found in
\citep{Sch93, Sch93a}. In our previous work \citep{HN18, HNN19} we used the
notation \(\mV\) instead of \(\mH\); however, in this paper we think of the
classic Vietoris topology \citep{Vie22} as an extension of the Hausdorff metric
and reserve the designation \(\mV\) for the monad based on presheafs
\(X \longrightarrow\V\) rather than subsets \(A\subseteq X\). Similarly to the
$\two$-enriched case mentioned in the Introduction, we obtain the commutative
diagram
\begin{displaymath}
  \begin{tikzcd}[column sep=large]
    \POSCH_\mH \ar[r,"\ftC"]
    & \FinSups{[0,1]}^\op \\
    \POSCH \ar[u] \ar[ur,"{\ftC=\hom(-,[0,1]^\op)}"']
  \end{tikzcd}
\end{displaymath}
of functors. However, unlike
\(\hom(-,1)\colon\PRIEST_{\mH}\longrightarrow\FINSUP^{\op}\), the functor
$\ftC \colon\POSCH_\mH \longrightarrow \FinSups{[0,1]}^\op$ is not fully
faithful, as the next example shows.

\begin{example}\label{d:ex:1}
  As observed in \citep[Example~6.16]{HN18}, for every $u\in [0,1]$, the map
  $u\otimes- \colon [0,1] \longrightarrow [0,1]$ is a morphism in
  $\FinSups{[0,1]}$ sending $1$ to $u$. On the other hand, there are only two
  morphisms of type $1\lmodto 1$ in $\POSCH_\mH$.
\end{example}

Therefore we have to consider further structure on the right-hand side. The
starting point is the following observation.

\begin{theorem}
  The category $\FinSups{[0,1]}$ has a bimorphism representing monoidal
  structure.
\end{theorem}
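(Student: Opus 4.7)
The plan is to exhibit the desired monoidal structure on $\FinSups{[0,1]}$ by realising this category as the Eilenberg--Moore category of a commutative strong monad on the symmetric monoidal closed category $\Cats{[0,1]}$, and then invoking the classical theorem of Kock that algebras for a commutative monad inherit a symmetric monoidal structure whose tensor product represents bimorphisms.

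First, I would recall that $\Cats{[0,1]}$ is itself symmetric monoidal closed: for $[0,1]$-categories $X$ and $Y$, the tensor $X \otimes Y$ has underlying set $X \times Y$ and hom-objects $(X \otimes Y)((x,y),(x',y')) = X(x,x') \otimes Y(y,y')$, with internal hom the $[0,1]$-category of $[0,1]$-functors equipped with the pointwise structure. This works precisely because $[0,1]$ is a \emph{commutative} quantale. Against this background, a bimorphism $A \times B \to C$ in $\FinSups{[0,1]}$ should be understood as a $[0,1]$-functor $A \otimes B \to C$ in $\Cats{[0,1]}$ whose partial evaluations $f(a,-)\colon B\to C$ and $f(-,b)\colon A\to C$ lie in $\FinSups{[0,1]}$ for every $a\in A$ and $b\in B$.

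Second, I would identify $\FinSups{[0,1]}$ as algebras for the finite-colimit-cocompletion monad $\ftD$ on $\Cats{[0,1]}$ (i.e.\ the sub-PROP of the free cocompletion monad spanned by the finite weights); the key point is that $\ftD$ is a \emph{strong} monad because cocompletion under weighted colimits is compatible with the closed structure of $\Cats{[0,1]}$, and it is \emph{commutative} because for finite weights $\varphi\colon A^\op\to[0,1]$ and $\psi\colon B^\op\to[0,1]$, the weighted colimit of a bifunctor $H\colon A\otimes B\to C$ with weight $\varphi\otimes\psi$ can be computed in either order, by the Fubini theorem for weighted colimits (see \citep{Kel82}) combined with the commutativity of $\otimes$ on $[0,1]$.

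Third, by Kock's theorem (see \citep{Kel82}), the category of algebras for a commutative monad on a symmetric monoidal closed base inherits a symmetric monoidal closed structure in which the tensor product $A\otimes B$ fits into a natural bijection
\[
\FinSups{[0,1]}(A\otimes B,C)\;\cong\;\{\text{bimorphisms } A\times B\to C\},
\]
which is exactly the bimorphism-representing property asserted in the statement. The main obstacle is the verification of commutativity of the monad $\ftD$; all the other ingredients are essentially general nonsense once one has a commutative monad in hand, so the technical core of the argument reduces to the Fubini-type interchange of two finite weighted colimits—an interchange which ultimately rests on the commutativity of $\otimes$ on the quantale $[0,1]$.
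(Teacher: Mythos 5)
Your argument is correct in outline, but it takes a genuinely different route from the paper. The paper simply invokes \citep[Section~6.5]{Kel82}, where Kelly constructs the symmetric monoidal closed structure on the $2$-category of $\Phi$-cocomplete categories and $\Phi$-cocontinuous functors directly: one first checks that the internal hom $\Phi\text{-}\mathrm{Cocts}(B,C)$ (functors preserving the chosen colimits, with pointwise structure) is again $\Phi$-cocomplete, and then obtains the tensor $A\otimes B$ representing separately cocontinuous functors by an adjoint-functor/reflection argument. You instead realise $\FinSups{[0,1]}$ as algebras for the free finite-cocompletion monad and appeal to the commutative-monad machinery of Kock. Both are ``general nonsense'' resting on the same Fubini interchange of weighted colimits, but Kelly's route avoids having to package the cocompletion as a monad and hands you the closed structure for free, whereas your route makes the role of commutativity of the quantale $[0,1]$ more visible and generalises cleanly to other commutative monads. (Note that Kock's theorem is not in \citep{Kel82}; you should cite Kock's papers on commutative monads, or Kelly's Section~6.5 for the direct construction.)

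Three points in your sketch need more care before the argument is complete. First, the free finite-cocompletion is a lax-idempotent $2$-monad, so identifying $\FinSups{[0,1]}$ with its Eilenberg--Moore category requires the observation that, over a quantale, an algebra structure is a left adjoint to the Yoneda unit, is essentially unique (strictly unique in the separated case), and that algebra morphisms are exactly the finite-colimit-preserving $[0,1]$-functors; for non-separated $[0,1]$-categories one must work with pseudo-algebras. Second, commutativity is not just ``Fubini plus commutativity of $\otimes$'': you must also check that the convolution $\varphi\otimes\psi$ of two finite weights lies again in the (saturation of the) class of finite weights, so that the interchange stays inside the monad; this is where the theory of saturated classes of weights enters. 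Third, Kock's theorem yields the internal hom of algebras immediately, but the \emph{tensor} of algebras only exists given suitable reflexive coequalisers in the Eilenberg--Moore category; this is available here because $\FinSups{[0,1]}$ is a quasivariety and hence cocomplete, but it should be said. With these points supplied, your proof goes through.
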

\begin{proof}
  See \citep[Section~6.5]{Kel82}.
\end{proof}

This leads us to the category
\begin{equation*}
  \Mnd(\FinSups{[0,1]})
\end{equation*}
of monoids and homomorphisms in $\FinSups{[0,1]}$ with respect to the
above-mentioned monoidal structure and \emph{with neutral element the
  top-element}, and to the category
\begin{equation*}
  \LaxMnd(\FinSups{[0,1]})
\end{equation*}
with the same objects as $\Mnd(\FinSups{[0,1]})$, but now with morphisms those
of $\FinSups{[0,1]}$ that preserve the monoid structure laxly:
\begin{equation*}
  \Phi(\psi_1\otimes \psi_2)\le\Phi(\psi_1)\otimes\Phi(\psi_2).
\end{equation*}
We obtain the commutative diagram
\begin{displaymath}
  \begin{tikzcd}[column sep=large, row sep=large]
    \POSCH_\mH \ar[r,"\ftC"]\ar[dotted, bend right=50]{d}
    & \LaxMnd(\FinSups{[0,1]})^\op \ar[dotted]{dl}[rotate=30, yshift=-2ex]{\top}\\
    \POSCH \ar{u}[xshift=-0.7ex]{\dashv} \ar[bend right]{ur}[swap]{\ftC=\hom(-,[0,1]^\op)}
  \end{tikzcd}
\end{displaymath}
of functors (represented by solid arrows), and the induced monad morphism
$j=(j_{X})_{X}$ is given by the family of maps
\begin{center}
  $j_X \colon \ftH X\longrightarrow[\ftC X,[0,1]],\;A\longmapsto\Phi_A$,
\end{center}
with
$\Phi_A \colon \ftC X \longrightarrow[0,1],\;\psi\longmapsto\sup_{x\in
  A}\psi(x)$.

\begin{proposition}
  Let $X$ be in $\POSCH$ and $A\subseteq X$ closed and upper. Then $A$ is
  irreducible if and only if $\Phi_A$ satisfies
  \begin{center}
    $\Phi_A(1)=1$\quad and \quad
    $\Phi_A(\psi_1\otimes \psi_2)=\Phi_A(\psi_1)\otimes\Phi_A(\psi_2)$.
  \end{center}
\end{proposition}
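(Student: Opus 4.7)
The plan is to prove each implication separately. In both directions the unit condition $\Phi_A(1)=1$ is trivial: it amounts to $A\neq\varnothing$, which is part of the usual definition of irreducibility. So all of the work lies in the multiplicativity condition. The two technical tools I would use are (i)\ the continuity of the quantale multiplication $\otimes\colon [0,1]\times[0,1]\longrightarrow [0,1]$, which provides the ``approximation from below'' principle ``$c<u_1\otimes u_2$ implies there exist $c_i<u_i$ with $c_1\otimes c_2\ge c$'', and (ii)\ Nachbin's order-theoretic Urysohn lemma for POSCH, which separates any disjoint closed upper set and closed lower set by a morphism $X\longrightarrow[0,1]^{\op}$, that is, an element of $\ftC X$.

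For the forward implication, I would assume $A$ irreducible. The inequality $\Phi_A(\psi_1\otimes\psi_2)\le\Phi_A(\psi_1)\otimes\Phi_A(\psi_2)$ is immediate from monotonicity of $\otimes$ and of $\sup$. For the converse, set $u_i:=\Phi_A(\psi_i)$, fix any $c<u_1\otimes u_2$, and choose $c_i<u_i$ with $c_1\otimes c_2\ge c$ via continuity of $\otimes$. The sets $A_i:=A\cap\psi_i^{-1}([0,c_i])$ are closed (by continuity of $\psi_i$ and closedness of $A$) and upper (since $\psi_i$, as a morphism into $[0,1]^{\op}$, is antitone, and $A$ is upper), and each $A_i$ is a proper subset of $A$ because $u_i>c_i$. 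Irreducibility then forces $A\ne A_1\cup A_2$, so some $x\in A$ satisfies $\psi_1(x)>c_1$ \emph{and} $\psi_2(x)>c_2$ simultaneously, giving $\psi_1(x)\otimes\psi_2(x)\ge c_1\otimes c_2\ge c$ by monotonicity of $\otimes$, and thus $\Phi_A(\psi_1\otimes\psi_2)\ge c$. Letting $c$ run through values below $u_1\otimes u_2$ yields the required inequality.

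For the backward implication, I would argue by contradiction. Suppose $A=B_1\cup B_2$ with $B_1,B_2$ proper closed upper subsets of $A$ and pick $x_i\in A\setminus B_i$. Since $B_i$ is upper and the order on $X$ is closed, $\downc x_i$ is closed lower and disjoint from $B_i$. Nachbin's lemma then supplies $\psi_i\in\ftC X$ with $\psi_i|_{B_i}=0$ and $\psi_i(x_i)=1$. Hence $\Phi_A(\psi_i)\ge\psi_i(x_i)=1$, so $\Phi_A(\psi_1)\otimes\Phi_A(\psi_2)=1\otimes 1=1$; on the other hand every $x\in A$ lies in some $B_i$ and so satisfies $\psi_i(x)=0$ for that $i$, which (using $0\otimes u=0$ in any quantale) gives $\psi_1(x)\otimes\psi_2(x)=0$ throughout $A$, and therefore $\Phi_A(\psi_1\otimes\psi_2)=0\ne 1$, contradicting the assumed identity.

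The main obstacle is the reverse inequality in the forward direction: one must pass from the \emph{separate} suprema $u_1$ and $u_2$ to a \emph{single} point of $A$ realising both $\psi_i(x)>c_i$ at once. This is exactly where the topological irreducibility of $A$ feeds nontrivially into the analytic continuity of $\otimes$; without the continuous-quantale hypothesis on $[0,1]$, the approximation-from-below step would fail and the argument would not go through.
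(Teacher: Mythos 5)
Your proof is correct. The paper itself gives no argument for this proposition, deferring entirely to \citep[Proposition~6.7]{HN18}; your two-sided argument --- the $\le$ inequality from monotonicity, the $\ge$ inequality via approximating $u_1\otimes u_2$ from below and using irreducibility to find a single point of $A$ witnessing both bounds, and Nachbin's order-separation lemma for the converse --- is exactly the standard proof given there, so it takes essentially the same approach. (One cosmetic remark: the approximation step $c<u_1\otimes u_2\Rightarrow\exists\,c_i<u_i$ with $c\le c_1\otimes c_2$ already follows from $\otimes$ preserving suprema in each variable, which holds in any quantale; topological continuity of $\otimes$ is what you need to know that $\psi_1\otimes\psi_2$ is again an element of $\ftC X$.)
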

\begin{proof}
  See \citep[Proposition~6.7]{HN18}.
\end{proof}

\begin{corollary}
  Let $\varphi \colon X\lmodto Y$ be a morphism in $\POSCH_{\mH}$. Then $\varphi$
  is a function if and only if $C\varphi$ is a morphism in
  $\Mnd(\FinSups{[0,1]})$.
\end{corollary}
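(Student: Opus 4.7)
The plan is to read the condition ``$\ftC\varphi$ is a morphism in $\Mnd(\FinSups{[0,1]})$'' pointwise in $x \in X$ and then invoke the preceding proposition. The monoidal structure on $\ftC X = \hom(X,[0,1]^{\op})$ is pointwise, with neutral element the constant function $1$ and tensor given pointwise by $\otimes$; combined with the formula $\ftC\varphi(\psi)(x) = \Phi_{\varphi(x)}(\psi)$ coming from the monad morphism $j$, this means that each evaluation map $\ftC X \to [0,1]$, $h \mapsto h(x)$, is itself a strict monoid homomorphism. Consequently $\ftC\varphi$ preserves the unit and the tensor strictly if and only if the composite $\Phi_{\varphi(x)} \colon \ftC Y \to [0,1]$ does so for every $x \in X$: the forward direction follows by composing with the evaluation maps, and the converse holds because equalities in $\ftC X$ are tested pointwise.

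By the preceding proposition, the latter condition is equivalent to $\varphi(x)$ being irreducible for every $x \in X$. It therefore suffices to show that $\varphi$ is a function precisely when each $\varphi(x)$ is irreducible. If $\varphi = \eta_Y \circ f$ for some $\POSCH$-morphism $f \colon X \to Y$, then $\varphi(x) = \upc f(x)$ is a principal upper set, which is non-empty and irreducible. For the converse I would invoke the standard fact that in a partially ordered compact Hausdorff space every irreducible closed upper subset is of the form $\upc y$ for some $y$; combined with separatedness of $Y$, this yields a unique element $f(x) \in Y$ with $\varphi(x) = \upc f(x)$, hence a set-function $f \colon X \to Y$ satisfying $\varphi = \eta_Y \circ f$.

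It remains to verify that the resulting $f$ is itself a morphism in $\POSCH$. Monotonicity follows from monotonicity of $\varphi$ and the description of the order on $\ftH Y$ by reverse inclusion of closed upper subsets, under which $\upc f(x) \le \upc f(x')$ translates into $f(x) \le f(x')$. Continuity is inherited from $\varphi$ via the fact that $\eta_Y \colon Y \to \ftH Y$ is a (regular) embedding in $\POSCH$, so that $Y$ is homeomorphic to its image $\eta_Y(Y) \subseteq \ftH Y$ and the factorisation $\varphi = \eta_Y \circ f$ forces $f$ to be continuous. The main non-routine ingredient is the identification of irreducible closed upper subsets with principal ones in a partially ordered compact Hausdorff space, which is a standard consequence of Nachbin's separation theorem; everything else is a matter of unwinding definitions and transferring properties along the embedding $\eta_Y$.
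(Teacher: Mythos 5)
Your argument is correct and is precisely the unwinding the paper intends: the corollary is stated without proof as an immediate consequence of the preceding proposition, and you correctly reduce ``$\ftC\varphi$ preserves the monoid structure'' pointwise to ``each $\Phi_{\varphi(x)}$ is a strict monoid morphism'', apply the proposition, and use the standard identification (via Nachbin separation and compactness, as in \citep{HN18}) of irreducible closed upper subsets with principal ones $\upc y$. The remaining verifications that the induced $f$ is monotone and continuous via the embedding $\eta_Y$ are routine and handled adequately.
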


\begin{theorem}\label{d:thm:7}
  For $\otimes=*$ or $\otimes=\luk$, the monad morphism $j$ is an
  isomorphism. Therefore the functors
  \begin{equation*}
    \ftC \colon \POSCH_{\mH} \longrightarrow\LaxMnd(\FinSups{[0,1]})^\op
    \quad\text{and}\quad
    \ftC \colon \POSCH \longrightarrow\Mnd(\FinSups{[0,1]})^\op
  \end{equation*}
  are fully faithful.
\end{theorem}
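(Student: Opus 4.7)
The plan is to reduce the theorem to the single claim that \(j_X\) is a bijection of underlying sets for every \(X \in \POSCH\); continuity and monotonicity of \(j_X\) and of its inverse will then be direct verifications from the explicit formulas below. Granting the bijectivity, fully-faithfulness of \(\ftC \colon \POSCH_{\mH} \to \LaxMnd(\FinSups{[0,1]})^{\op}\) follows from the displayed triangle of functors: a morphism \(X \lmodto Y\) in \(\POSCH_{\mH}\) is by definition a \(\POSCH\)-morphism \(X \to \ftH Y\) and is translated by \(j_Y\) (applied pointwise) into a \(\LaxMnd(\FinSups{[0,1]})\)-morphism \(\ftC Y \to \ftC X\), which is exactly the action of \(\ftC\) on morphisms. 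The second claim then follows from the preceding Corollary, which singles out the functions among Kleisli morphisms as precisely those whose image under \(\ftC\) is a strict monoid morphism.

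For injectivity of \(j_X\), suppose \(A \ne B\) are closed upper subsets of \(X\) and pick \(x \in A \setminus B\). Since \(X\) is a compact pospace and \(B\) is closed upper, Nachbin's separation theorem produces a continuous antitone map \(\psi \colon X \to [0,1]\), i.e.\ an element \(\psi \in \POSCH(X, [0,1]^{\op}) = \ftC X\), with \(\psi(x) = 1\) and \(\psi|_B \equiv 0\). Consequently \(\Phi_A(\psi) \geq \psi(x) = 1 > 0 = \Phi_B(\psi)\), so \(j_X(A) \ne j_X(B)\).

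For surjectivity, given \(\Phi \in \LaxMnd(\FinSups{[0,1]})(\ftC X, [0,1])\) define the candidate closed upper set
\[
A_\Phi := \{x \in X \mid \psi(x) \leq \Phi(\psi) \text{ for all } \psi \in \ftC X\}.
\]
Each constraint set \(\{x : \psi(x) \leq \Phi(\psi)\}\) is closed (continuity of \(\psi\)) and upper (antitonicity of \(\psi\) as a map into \([0,1]^{\op}\)), so \(A_\Phi\) is closed upper, and the inequality \(\Phi_{A_\Phi} \leq \Phi\) is immediate. The main obstacle is the reverse inequality \(\Phi(\psi) \leq \Phi_{A_\Phi}(\psi) = \sup_{A_\Phi} \psi\); this is the sole place where the restriction \(\otimes \in \{*, \luk\}\) is essential, and the argument is the core content of \citep[Section~9]{HN18}. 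The strategy is a compactness argument: assuming \(\Phi(\psi) > r > \sup_{A_\Phi}\psi\), for each \(x \in X \setminus A_\Phi\) fix a witness \(\psi_x \in \ftC X\) with \(\psi_x(x) > \Phi(\psi_x)\). Since the closed set \(\{\psi \geq r\}\) is disjoint from \(A_\Phi\) and is covered by the open sets \(\{\psi_x > \Phi(\psi_x)\}\), a finite subcover selects finitely many witnesses which can be assembled --- using preservation of finite weighted colimits and the lax monoidal inequality \(\Phi(\psi_1 \otimes \psi_2) \leq \Phi(\psi_1) \otimes \Phi(\psi_2)\) --- into an element of \(\ftC X\) whose \(\Phi\)-value would have to strictly exceed itself. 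The quantitative combination step is the hard one: it relies on the archimedean and continuous character of \(*\) and \(\luk\), a property conspicuously absent for \(\otimes = \wedge\) (whose abundance of idempotents admits lax monoid morphisms that are not represented by closed upper sets), which is precisely why the theorem restricts to these two tensors.
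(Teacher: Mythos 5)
Your proposal is correct in outline and follows essentially the same route as the paper, whose proof of this theorem is simply a citation of \citep[Theorem~6.14 and Corollary~6.15]{HN18}: the reduction to bijectivity of $j_X$ (plus order-reflection, needed so that $j_X^{-1}$ is again a $\POSCH$-morphism), the Nachbin-separation argument for injectivity, and the candidate set $A_\Phi$ for surjectivity all match the cited argument. The one genuinely hard step --- the inequality $\Phi\le\Phi_{A_\Phi}$, which is where the restriction to $\otimes=*$ or $\otimes=\luk$ enters --- is only sketched and ultimately deferred to the same reference, which is exactly what the paper itself does, so no gap arises relative to the paper.
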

\begin{proof}
  See \citep[Theorem~6.14 and Corollary~6.15]{HN18}.
\end{proof}

Theorem~\ref{d:thm:7} does not extend to arbitrary continuous quantale
structures on $[0,1]$ since, by Example~\ref{d:ex:1}, the functor
$\ftC\colon\POSCH_\mH \longrightarrow \LaxMnd(\FinSups{[0,1]_\wedge})^\op$ is
not full. In fact, this example also shows that its restriction
$\ftC\colon\COMPHAUS_\mH \longrightarrow \LaxMnd(\FinSups{[0,1]_\wedge})^\op$ to
compact Hausdorff spaces is not full. However, passing from relations to
functions improves the situation: it is shown in \citep{Ban83} that the functor
$\ftC \colon\COMPHAUS \longrightarrow \Mnd(\FinSups{[0,1]_\wedge})^\op$ is fully
faithful (see also \citep[Remark~2.7]{HN18}). This result generalises to our
setting.

\begin{theorem}
  \label{p:2}
  The functor $\ftC \colon\COMPHAUS \longrightarrow\Mnd(\FinSups{[0,1]})^\op$ is
  fully faithful.
\end{theorem}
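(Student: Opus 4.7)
The natural strategy is to realise $\COMPHAUS$ as a full subcategory of $\POSCH$ via the \emph{discrete} order, and then to deduce the statement from Theorem~\ref{d:thm:7}. Concretely, equip each compact Hausdorff space $X$ with the equality relation as its partial order. Because any map into (or out of) a discretely ordered compact Hausdorff space is vacuously monotone, this assignment defines a full and faithful embedding $\iota\colon\COMPHAUS\longrightarrow\POSCH$ whose image is the replete full subcategory of $\POSCH$ spanned by the discretely ordered spaces.

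Next, I would verify that the composite $\ftC\circ\iota$ agrees (up to natural isomorphism) with the functor $\ftC\colon\COMPHAUS\longrightarrow\Mnd(\FinSups{[0,1]})^{\op}$ as defined in the paper. Since monotonicity from a discretely ordered $X$ is automatic, the underlying set of $\hom_{\POSCH}((X,=),[0,1]^{\op})$ is just $C(X,[0,1])$; the $[0,1]$-enriched structure (computed pointwise) and the monoid structure (pointwise $\otimes$ with the constant function at the top as neutral element) coincide in the two interpretations. Thus the diagram
\begin{equation*}
  \begin{tikzcd}[column sep=large]
    \COMPHAUS \ar[r,"\iota"] \ar[dr,"\ftC"'] & \POSCH \ar[d,"\ftC"] \\
    & \Mnd(\FinSups{[0,1]})^{\op}
  \end{tikzcd}
\end{equation*}
commutes up to a natural isomorphism.

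Since $\ftC\colon\POSCH\longrightarrow\Mnd(\FinSups{[0,1]})^{\op}$ is fully faithful by Theorem~\ref{d:thm:7} and $\iota$ is fully faithful, the composite $\ftC\circ\iota$ is fully faithful, which is exactly the claim. Explicitly, for $X,Y\in\COMPHAUS$ and a morphism $\Phi\colon\ftC Y\longrightarrow\ftC X$ in $\Mnd(\FinSups{[0,1]})$, Theorem~\ref{d:thm:7} applied to $(X,=)$ and $(Y,=)$ produces a unique $\POSCH$-morphism $f\colon(X,=)\longrightarrow(Y,=)$ with $\ftC f=\Phi$, and such $f$ is simply a continuous map $X\longrightarrow Y$.

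The only non-trivial point is the compatibility check in Step~2: one must confirm that the description of $\ftC X$ as a monoid object in $\FinSups{[0,1]}$ for $X\in\COMPHAUS$ is literally obtained from the $\POSCH$-level construction when $X$ carries the discrete order. This is a direct but necessary verification, since both the $[0,1]$-enrichment on $C(X,[0,1])$ and the neutral element of the monoid are defined with reference to $[0,1]^{\op}$, and the indices must be read correctly in the discrete case. Once this is in place, no further calculation with continuous quantale structures is required.
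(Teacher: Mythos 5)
There is a genuine gap here, and it concerns the scope of the tensor $\otimes$. Your reduction rests entirely on Theorem~\ref{d:thm:7}, which is stated (and true) only for $\otimes=*$ and $\otimes=\luk$. Theorem~\ref{p:2}, however, is asserted for an \emph{arbitrary} continuous quantale structure on $[0,1]$ with neutral element $1$ --- that is the standing hypothesis of this section, and the sentence immediately preceding the theorem makes clear that its point is precisely to generalise \citeauthor{Ban83}'s result for $\otimes=\wedge$, a case Theorem~\ref{d:thm:7} does not cover. Worse, for $\otimes=\wedge$ the route through $\POSCH$ is provably blocked: the example given right after Theorem~\ref{p:2} exhibits, for the two-point chain $X=\{0>1\}$, monoid morphisms $\Phi_w\colon\ftC X\longrightarrow[0,1]$ in $\Mnd(\FinSups{[0,1]_\wedge})$ not induced by any point, so the functor $\ftC\colon\POSCH\longrightarrow\Mnd(\FinSups{[0,1]_\wedge})^\op$ through which you want to factor is simply not full. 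The $\COMPHAUS$-level statement is nevertheless true for such tensors, but it requires an argument that exploits compact Hausdorffness directly in the spirit of \citeauthor{Ban83}'s lattice-theoretic recovery of the points of $X$ from $C(X,[0,1])$; this is what the paper's citation of \citep[Theorem~6.23]{HN18} supplies, and it cannot be obtained by restricting a $\POSCH$-level fullness result along the discrete-order embedding.

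That said, the mechanics of your argument are sound as far as they go: the discrete-order embedding $\iota\colon\COMPHAUS\longrightarrow\POSCH$ is fully faithful, the compatibility $\ftC\circ\iota\cong\ftC$ is a routine check (monotonicity into $[0,1]^{\op}$ is vacuous for the discrete order, and the monoid and enriched structures are computed pointwise in both readings), and a composite of fully faithful functors is fully faithful. So for $\otimes=*$ and $\otimes=\luk$ you do obtain the conclusion, and cleanly. What is missing is exactly the content of the theorem beyond Theorem~\ref{d:thm:7}: the tensors, such as $\wedge$, for which passing from ordered spaces to bare compact Hausdorff spaces is what rescues fullness.
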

\begin{proof}
  See \citep[Theorem~6.23]{HN18}.
\end{proof}

\begin{remark}
  To identify the image of the functor of Theorem~\ref{p:2}, we can proceed as
  in Section~7 of \citep{HN18}, although with a small adjustment.  Since we
  consider now ``initial with respect to \(\COMPHAUS \longrightarrow \SET \)''
  instead of ``initial with respect to \(\POSCH \longrightarrow \SET \)'' in
  \citep[Lemma~7.3 and 7.4]{HN18}, at the beginning of the proof we do not kown
  whether ``\(\psi(x) > \psi(y)\) or \(\psi(x) < \psi(y)\)''. We can remedy the
  situation by requiring that \(L\) is also closed in \(CX\) under an additional
  unary operation, which in \([0,1]\) is interpreted as \(u \mapsto 1 - u \).
  This new operation acts as a ``complement'', and introducing it corresponds to
  the passage from distributive lattices to Boolean algebras in the classical
  case (see also~\citep[Example~3.5]{Hof02}).
\end{remark}

However, \citeauthor{Ban83}'s result does not extend to partially ordered
compact spaces, as the following example shows.

\begin{example}
  The functor
  $\ftC\colon\POSCH \longrightarrow \Mnd(\FinSups{[0,1]_\wedge})^\op$ is not
  full. As pointed out in \citep[Example~6.16]{HN18}, for the separated ordered
  compact space $X=\{0 > 1\}$,
  \begin{equation*}
    \ftC X= \{(u,v)\in [0,1]\times[0,1] \mid u \leq v\}.
  \end{equation*}
  $\ftH X$ contains three elements; however, for every $w\in [0,1]$, the map
  \begin{equation*}
    \Phi_w \colon\ftC X\longrightarrow[0,1],\;(u,v)\longmapsto
    u\vee\left(w\wedge v\right)
  \end{equation*}
  is a morphism in $\Mnd(\FinSups{[0,1]_\wedge})$ with $\Phi_w(0,1)=w$.
\end{example}

\begin{theorem}
  We consider an additional operation $\ominus$ in our theory (which is
  interpreted as truncated minus in $[0,1]$). Then
  $\ftC\colon\POSCH_\mH \longrightarrow \LaxMnd_\ominus(\FinSups{[0,1]})^\op$ is
  fully faithful.
\end{theorem}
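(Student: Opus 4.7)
The plan is to adapt the strategy of Theorem~\ref{d:thm:7} and its analogue for $\COMPHAUS$ (Theorem~\ref{p:2}) to the present setting, where the enriched signature is augmented with the truncated-minus $\ominus$. The guiding idea is that the counterexample preceding the theorem (the maps $\Phi_w$) shows the lax-monoidal structure alone is too coarse to detect which morphisms in $\FinSups{[0,1]}^{\op}$ come from $\POSCH_\mH$, and that requiring compatibility with $\ominus$ is precisely what kills these ``phantom'' morphisms. Concretely, one should think of $\ominus$ as giving, for each scalar $c \in [0,1]$, a unary operation $\psi \mapsto \psi \ominus c$ on $\ftC X$, with the lax-monoidal morphisms in $\LaxMnd_\ominus$ required to preserve these (strictly, for each scalar), on top of laxly preserving $\otimes$.

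First I would check that the corestriction to $\LaxMnd_\ominus(\FinSups{[0,1]})^{\op}$ is well-defined: for $\varphi \colon X \lmodto Y$ in $\POSCH_\mH$, the morphism $\ftC\varphi$ is computed by $\ftC\varphi(\psi)(x)=\sup_{y\in\varphi(x)}\psi(y)$ on the closed upper irreducible set $\varphi(x)$, and the identity $\sup_{y\in A}(\psi(y)\ominus c) = (\sup_{y\in A}\psi(y))\ominus c$ (valid for compact $A$) shows $\ftC\varphi$ preserves the new operations strictly. Faithfulness then follows from the faithfulness of $\ftC$ into the larger target $\LaxMnd(\FinSups{[0,1]})^\op$, which in turn follows from the initiality of the cone $(\varphi\colon X\longrightarrow[0,1]^{\op})$ (Proposition~\ref{d:prop:1} and Remark~\ref{d:rem:2}).

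For fullness, given $\Phi\colon \ftC Y\longrightarrow \ftC X$ in $\LaxMnd_\ominus(\FinSups{[0,1]})$, I would for each $x\in X$ compose with evaluation $\mathrm{ev}_x\colon \ftC X\longrightarrow[0,1]$ (which is a strict monoid and $\ominus$-morphism) to obtain $\Phi_x=\mathrm{ev}_x\circ\Phi\colon \ftC Y\longrightarrow[0,1]$ in $\LaxMnd_\ominus$. The key proposition to prove is that every such $\Phi_x$ is of the form $\Phi_A$ for a unique closed upper irreducible $A\subseteq Y$; that is, $\ominus$-preservation upgrades the family of ``lax unit + lax $\otimes$''-preserving $[0,1]$-valued morphisms (which parametrise arbitrary closed upper subsets via $j_Y$) to precisely those corresponding to irreducible ones under $\ftH Y\hookrightarrow[\ftC Y,[0,1]]$. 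Granted this, one defines $\varphi\colon X\longrightarrow \ftH Y$ by $\varphi(x)=A_{\Phi_x}$, verifies continuity and monotonicity from the analogous properties of $x\mapsto \mathrm{ev}_x$ and the continuity of $\Phi$, and checks the identity $\ftC\varphi=\Phi$ directly from $\ftC\varphi(\psi)(x)=\Phi_{A_{\Phi_x}}(\psi)=\Phi_x(\psi)$ and the initiality of $(\mathrm{ev}_x)_{x\in X}$.

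The main obstacle is the core proposition that $\ominus$-preservation characterises irreducibility of the associated closed upper set. Qualitatively, one must show that the ``extra'' maps $\Phi_w\colon \ftC X\longrightarrow[0,1]$ arising in the counterexample fail $\ominus$-preservation: for instance, with $X=\{0>1\}$, $w=\tfrac12$, $\psi=(0,1)$, and $c=\tfrac12$, one computes $\Phi_w(\psi\ominus c)=\tfrac12\neq 0=\Phi_w(\psi)\ominus c$, confirming the obstruction disappears once $\ominus$ is added to the signature. Turning this observation into a general characterisation of irreducibility in terms of $\ominus$-preservation, and then checking that the closed upper set recovered from $\Phi_x$ varies continuously and monotonically in $x$ so as to land in $\ftH Y$ as a morphism of $\POSCH$, are the main technical steps; they should proceed in close parallel to [HN18, Sections~6--7] with the $\ominus$-preservation condition taking the role of the ``complementation'' used to pass from $\DLAT$-type to $\BOOL$-type signatures in the classical case alluded to in the preceding remark.
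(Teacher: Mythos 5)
The paper offers no in-house proof of this statement --- it defers entirely to [HN18, Theorem~6.19] --- so I can only measure your proposal against the structure of the argument that the surrounding results make visible. Your overall architecture (evaluate pointwise to get $\Phi_x=\mathrm{ev}_x\circ\Phi\colon\ftC Y\to[0,1]$, identify which such functionals come from $\ftH Y$, reassemble a continuous map $X\to\ftH Y$, and use the separation/initiality properties of $\POSCH$ for faithfulness) is the right one, and your two concrete computations --- that $\ftC\varphi$ strictly preserves $-\ominus c$ because $\sup$ commutes with truncated subtraction of a constant, and that the phantom morphism $\Phi_{1/2}$ of the preceding example violates $\ominus$-compatibility --- are both correct and are exactly the right sanity checks.

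There is, however, a genuine conceptual error in your ``key proposition''. You assert that $\ominus$-preservation forces each $\Phi_x$ to be $\Phi_A$ for an \emph{irreducible} closed upper set $A$, and later speak of ``a general characterisation of irreducibility in terms of $\ominus$-preservation''. If that were true, every morphism of $\LaxMnd_\ominus(\FinSups{[0,1]})$ would be induced by a \emph{function}, whereas the theorem concerns the Kleisli category $\POSCH_\mH$ of \emph{relations}; worse, it contradicts your own well-definedness step: for a relation $\varphi$ with $\varphi(x)=\{a,b\}$, $a,b$ incomparable, you showed $\ftC\varphi$ lands in $\LaxMnd_\ominus$, yet $\Phi_{\{a,b\}}(\psi)=\psi(a)\vee\psi(b)$ is not multiplicative (take $\psi_1,\psi_2$ supported at $a$ and $b$ respectively) and hence cannot equal $\Phi_A$ for irreducible $A$. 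The two conditions play orthogonal roles. Irreducibility is detected by \emph{strict} preservation of the unit and of $\otimes$ --- that is the Proposition quoted from [HN18, Proposition~6.7], and it is what one imposes \emph{afterwards} to pass from relations to functions (the Corollary following it). The operation $\ominus$ does something different: it forces a lax monoidal functional to be of the form $\Phi_A$ for \emph{some} closed upper $A$ at all, i.e.\ it makes the monad morphism $j_Y\colon\ftH Y\to[\ftC Y,[0,1]]$ surjective onto the $\LaxMnd_\ominus$-morphisms (this is the content of the counterexample: $\Phi_w$ is even \emph{strictly} monoidal for $\otimes=\wedge$ but corresponds to no closed upper set). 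So the lemma you must prove is: every lax monoidal, $\ominus$-compatible $\Phi\colon\ftC Y\to[0,1]$ equals $\Phi_A$ for a unique closed upper $A\subseteq Y$ --- with no irreducibility --- where one recovers $A$ as $\{y\in Y\mid\psi(y)\le\Phi(\psi)\text{ for all }\psi\}$ and uses $\ominus$ together with Nachbin's separation of a point from a closed upper set to establish $\Phi=\Phi_A$. (A minor further caveat: you take the new unary operations to be preserved strictly, whereas the definition in [HN18] only demands a lax inequality; the lemma has to be proved for whichever class the theorem actually quantifies over.) With the key lemma corrected in this way, the remaining steps you outline --- continuity of $x\mapsto A_{\Phi_x}$ into the Vietoris space and the identity $\ftC\varphi=\Phi$ --- go through as you describe.
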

\begin{proof}
  See \citep[Theorem~6.19][]{HN18}.
\end{proof}

As we already observed in \citep{HN18}, the setting above is not really
consequential since we still consider \emph{ordered} compact Hausdorff spaces as
well as the Vietoris functor based on \emph{subsets}, that is, continuous
functions into the Sierpiński space \(\two\). We obtain results closer to the
classical case by also enriching the topological side. That is, we consider
enriched Priestley spaces and the \emph{enriched} Vietoris monad
$\mV=\vmonad$. The latter is introduced in \citep{Hof14} in the context of
\(\thU\)-categories and \(\thU\)-functors, for a \emph{topological theory}
\(\thU=\utheory\) based on the ultrafilter monad \(\mU=\umonad\). For an
overview of the background theory we refer to \citep[Section~1]{Hof14}, and
mention here only that
\begin{itemize}
\item an \(\thU\)-category \((X,a)\) is given by a set \(X\) and a map
  \(a \colon \ftU X\times X \longrightarrow \V\) satisfying two axioms similar
  to the ones of a \(\V\)-category,
\item the category of \(\thU\)-categories and \(\thU\)-functors is denoted as
  \(\Cats{\thU}\),
\item by combining the internal hom and the convergence
  \(\xi \colon \ftU\V\longrightarrow\V\), the quantale \(\V\) becomes an
  \(\thU\)-category where \((\fv,v)\longmapsto\hom(\xi(\fv),v)\),
\item the underlying set of $\ftV X$ is the set
  \begin{equation*}
    \{\text{all $\thU$-functors $\varphi \colon X \longrightarrow \V$}\}.
  \end{equation*}
\end{itemize}
For \(\V=\two\), \(\thU\)-categories correspond to topological spaces and
\(\thU\)-functors to continuous maps (see \citep{Bar70}), and \(\V=\two\) is the
Sierpiński space \(\two\) where \(\{1\}\) is closed. On the other hand, for the
multiplication \(*\) on \([0,1]\), an \(\thU\)-category is essentially an
approach space (see \citep{Low97}), thanks to the isomorphism of quantales
\([0,1]_{*}\simeq\Pp\).

\begin{remark}
  An interesting connection between topological theories and lax distributive
  laws is exposed in \citep{Tho19}.
\end{remark}

\begin{theorem}
  If $\otimes=*$, \(\otimes=\wedge\) or $\otimes=\luk$, then the monad
  $\mV=\vmonad$ on \(\Cats{\thU}\) restricts to $\Priests{[0,1]}$.
\end{theorem}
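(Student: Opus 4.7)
The plan is to show that for each $X$ in $\Priests{[0,1]}$, the $\thU$-category $\ftV X$ produced by $\mV$ underlies a Priestley structure; once this is established, the monad structure maps $\coyoneda_X$ and $\coyonmult_X$ land in $\Priests{[0,1]}$ by full faithfulness of the inclusion $\Priests{[0,1]} \hookrightarrow \Cats{\thU}$ (factoring through $\CatCHs{[0,1]}$). The work thus splits into two tasks: (i) equipping $\ftV X$ with a $[0,1]$-categorical compact Hausdorff structure lifting its $\thU$-category structure, and (ii) verifying the Priestley condition on this structure.

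For (i), I would embed $\ftV X \hookrightarrow \V^X$ and observe that the product compact Hausdorff topology (using the Lawson topology on $\V$ from Remark~\ref{d:rem:2}) restricts to $\ftV X$, since the defining inequality for $\varphi \colon X \to \V$ to be a $\thU$-functor is stable under pointwise ultrafilter convergence in $\V$. The pointwise $[0,1]$-category structure, with hom $[\varphi,\psi] = \bigwedge_{x \in X}\hom(\varphi(x),\psi(x))$, is compatible with this topology, and its associated $\thU$-category structure coincides with the one produced by $\mV$.

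For (ii), I would exhibit a point-separating initial cone $\ftV X \longrightarrow \V^{\op}$. The evaluations $\mathrm{ev}_x \colon \ftV X \longrightarrow \V$ for $x \in X$ form a point-separating initial cone to $\V$ by construction of the pointwise structure. The three cases of the theorem are then handled differently. For $\otimes = \luk$, the complementation $u \mapsto 1 - u$ is an involutive isomorphism $\V \cong \V^{\op}$ of $[0,1]$-categorical compact Hausdorff spaces, so post-composing the evaluations yields the required cone. For $\otimes = \wedge$, any order-reversing self-homeomorphism of $[0,1]$ induces an analogous isomorphism, since $\wedge$ is symmetric under order-reversal. For $\otimes = *$, no such involution is available; one uses instead the family $\hom(-, v) \colon \V \longrightarrow \V^{\op}$ indexed by $v \in \V$, and then the composite family $(\hom(\mathrm{ev}_x(-), v))_{x \in X,\, v \in \V}$ gives the required point-separating initial cone.

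The main obstacle is the multiplicative case $\otimes = *$, where initiality of $(\hom(\mathrm{ev}_x(-), v))_{x,v}$ reduces to showing that $\V$ itself is Priestley with respect to $\otimes = *$, i.e.\ that $(\hom(-, v))_{v \in \V} \colon \V \longrightarrow \V^{\op}$ is point-separating and initial in $\CatCHs{[0,1]}$. This follows from the enriched Yoneda identity $\hom(u, u') = \bigwedge_{v \in \V} \hom(\hom(u', v), \hom(u, v))$, which holds in any quantale with completely distributive underlying lattice, combined with Proposition~\ref{d:prop:1} applied to $\V^{\op}$ in lieu of $\V$ to secure the topological part of initiality.
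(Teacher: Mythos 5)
The paper's own ``proof'' is just a citation of \citep[Corollary~9.7]{HN18}, so there is nothing internal to compare against; but your argument, as written, has a gap that is fatal already for \(\V=\two\). The problem is step~(i): the set of \(\thU\)-functors \(\varphi\colon X\longrightarrow\V\) is \emph{not} closed in \(\V^{X}\) under pointwise (Lawson) ultrafilter convergence, so the subspace you propose is not compact, and --- more importantly --- it is not the structure that the monad \(\mV\) of \citep{Hof14} actually puts on \(\ftV X\). Concretely, take \(\V=\two\) and \(X=\mathbb{N}\cup\{\infty\}\) the one-point compactification with discrete order: the characteristic functions of the closed sets \(\{0,\dots,n\}\) converge pointwise to \(\chi_{\mathbb{N}}\), which is not a \(\thU\)-functor because \(\mathbb{N}\) is not closed. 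The genuine compact Hausdorff topology on \(\ftV X\) is the Vietoris one, which differs from the product topology, and for it the evaluations \(\mathrm{ev}_{x}\) are in general \emph{not} continuous: in the same example \(\mathrm{ev}_{\infty}\) sends each \(\{n\}\) to \(0\) but sends their Vietoris limit \(\{\infty\}\) to \(1\). Hence the cone underlying all of step~(ii) is not a cone in \(\CatCHs{[0,1]}\) at all. The continuous maps out of \(\ftV X\) that one actually has are the ``hit/miss''-type operations indexed by morphisms out of \(X\) (this is what the monad morphism \(j_{X}(\varphi)=(\psi\mapsto\psi\cdot\varphi)\) is built from), and the substance of the result is showing that enough of these are morphisms into \([0,1]^{\op}\) and form an initial cone. (Your appeal to ``full faithfulness of the inclusion \(\Priests{[0,1]}\hookrightarrow\Cats{\thU}\)'' for the unit and multiplication is also unavailable: already \(\CatCHs{\two}\longrightarrow\Cats{\thU}\) is faithful but not full.)

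Independently of this, the case \(\otimes=\wedge\) in step~(ii) is wrong: there is no isomorphism \([0,1]_{\wedge}\simeq[0,1]_{\wedge}^{\op}\) of \([0,1]_{\wedge}\)-categories induced by an order-reversing bijection (nor by any bijection). Indeed, \(\hom_{\wedge}(u,v)=v\) whenever \(v<u\), so an isomorphism \(\sigma\) would have to satisfy \(v=\hom(u,v)=\hom(\sigma v,\sigma u)=\sigma u\) for \emph{all} \(v<u\), which is absurd; compare \(\hom(1,\tfrac12)\) with \(\hom(1,\tfrac13)\). This is exactly the \(\two\) versus \(\two^{\op}\) pitfall flagged in the Introduction, and the paper records the isomorphism \((-)^{\bot}\colon[0,1]\longrightarrow[0,1]^{\op}\) only for the \L{}ukasiewicz tensor. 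Your treatment of \(\otimes=*\) via the cone \((\hom(-,v))_{v}\) is a reasonable way to see that \(\V\) itself is Priestley (modulo checking Lawson continuity of \(\hom(-,v)\), which fails at \(v=0\)), but it cannot be bootstrapped to \(\ftV X\) through the evaluations for the reason above.
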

\begin{proof}
  See \citep[Corollary~9.7]{HN18}.
\end{proof}

We obtain now the commutative diagram
\begin{displaymath}
  \begin{tikzcd}[column sep=large, row sep=large]
    \Priests{[0,1]}_\mV \ar[r,"\ftC"]\ar[dotted, bend right=50]{d}
    & \FinSups{[0,1]}^\op \ar[dotted]{dl}[rotate=30, yshift=-2ex]{\top} \\
    \Priests{[0,1]} \ar{u}[xshift=-0.7ex]{\dashv} \ar[bend right]{ur}[swap]{\ftC=\hom(-,[0,1]^\op)}
  \end{tikzcd}
\end{displaymath}
of functors (represented by solid arrows), we stress that here the functor
\(\ftC \colon \Priests{[0,1]}_\mV \longrightarrow\FinSups{[0,1]}^\op\) is a
lifting of the hom-functor \(\hom(-,1)\). The $X$-component of the induced monad
morphism $j$ is given by
\begin{equation*}
  j_X \colon \ftV X\longrightarrow [\ftC X,[0,1]],\quad
  (\varphi \colon
  1\modto X)\longmapsto\left(\psi\mapsto\psi\cdot\varphi=\bigvee_{x\in
      X}(\psi(x)\otimes\varphi(x))\right).
\end{equation*}

\begin{theorem}\label{d:thm:3}
  If $\otimes=*$ or $\otimes=\luk$, then the monad morphism $j$ is an
  isomorphism. Consequently, the functor
  \begin{equation}\label{d:eq:6}
    \ftC \colon \Priests{[0,1]}_\mV \longrightarrow\FinSups{[0,1]}^\op
  \end{equation}
  is fully faithful.
\end{theorem}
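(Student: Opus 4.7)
The plan is to verify in two stages that (1) the monad morphism $j$ is an isomorphism and (2) that $\ftC\colon\Priests{[0,1]}_\mV\longrightarrow\FinSups{[0,1]}^\op$ is fully faithful. The second stage is purely formal: once $j$ is a monad isomorphism, the Kleisli category of $\mV$ coincides with the Kleisli category of the adjunction-induced monad on $\Priests{[0,1]}$, and the canonical comparison functor from the Kleisli category of any adjunction-induced monad into the codomain of the left adjoint is fully faithful via the adjunction hom-bijection. So the bulk of the argument is stage (1), showing that each component
\[
  j_X\colon\ftV X\longrightarrow\FinSups{[0,1]}(\ftC X,[0,1]),\qquad
  \varphi\longmapsto\Bigl(\psi\mapsto\bigvee_{x\in X}\psi(x)\otimes\varphi(x)\Bigr),
\]
is a bijection.

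For injectivity of $j_X$: if $\psi\cdot\varphi_1=\psi\cdot\varphi_2$ for every $\psi\in\ftC X$, the Priestley property---that the cone $(\psi\colon X\to[0,1]^\op)_{\psi\in\ftC X}$ is initial and point-separating---forces $\varphi_1=\varphi_2$. The argument is essentially pointwise: for each $x\in X$ the pairings against enough $\psi$ recover $\varphi_i(x)$, using that the $\thU$-category structure on $X$ (and hence on presheaves over it) is the initial one induced by $\ftC X$.

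For surjectivity, given $\Phi\in\FinSups{[0,1]}(\ftC X,[0,1])$, the natural Yoneda-type candidate is
\[
  \varphi(x):=\bigwedge_{\psi\in\ftC X}\hom(\psi(x),\Phi(\psi)).
\]
I would then check that $\varphi$ is a $\thU$-functor, so $\varphi\in\ftV X$, and verify $j_X(\varphi)=\Phi$. The inequality $j_X(\varphi)\le\Phi$ is immediate from the counit $u\otimes\hom(u,v)\le v$. The reverse inequality $\Phi(\psi)\le\bigvee_x\psi(x)\otimes\varphi(x)$ is the main obstacle, and is where the hypothesis $\otimes=*$ or $\otimes=\luk$ enters: both of these tensors satisfy the divisibility identity $u\otimes\hom(u,v)=u\wedge v$, which permits presenting a general $\psi\in\ftC X$ as a join of ``scaled'' elements that $\Phi$ manipulates correctly because it preserves finite weighted colimits, thereby allowing $\Phi(\psi)$ to be reduced to values controlled by $\varphi$.

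The hardest step is precisely this reverse inequality. Once the candidate $\varphi$ is in hand and the divisibility identity is identified as the algebraic engine, the remaining work---verifying the $\thU$-functor property of $\varphi$ and the easier inequality---is essentially bookkeeping, specific to the two admissible tensor structures on $[0,1]$.
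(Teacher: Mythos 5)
The paper does not actually prove this theorem---it defers entirely to \citep[Theorem~9.10]{HN18}---so your proposal has to be judged against that cited proof. Your architecture matches it in outline: the passage from ``$j$ is a monad isomorphism'' to full faithfulness of $\ftC$ on the Kleisli category is the standard comparison-functor argument and is fine, and the Yoneda-type candidate $\varphi(x)=\bigwedge_{\psi}\hom(\psi(x),\Phi(\psi))$ together with the easy inequality $j_X(\varphi)\le\Phi$ is indeed how the surjectivity half begins. Two steps, however, have genuine gaps.

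First, injectivity of $j_X$ is not ``essentially pointwise''. The pairing $\psi\cdot\varphi=\bigvee_{x\in X}\psi(x)\otimes\varphi(x)$ is a global supremum over $X$, so recovering $\varphi(y)$ from these values requires exhibiting maps $\psi\in\ftC X$ that are large at $y$ and controlled on the rest of $X$; this is a Urysohn-type separation argument using compactness and the Priestley cone, and it is exactly the same lemma needed for your ``reverse inequality'': one proves $\varphi(y)=\bigwedge_{\psi}\hom\bigl(\psi(y),j_X(\varphi)(\psi)\bigr)$ for every $\varphi\in\ftV X$, which simultaneously gives injectivity and identifies the candidate in the surjectivity step. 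Initiality of the cone alone does not produce these $\psi$; treating injectivity as formal hides most of the analytic content.

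Second, and more seriously, divisibility cannot be ``the algebraic engine''. The identity $u\otimes\hom(u,v)=u\wedge v$ holds for \emph{every} continuous quantale structure on $[0,1]$ with unit $1$, in particular for $\otimes=\wedge$. Yet the theorem is asserted only for $*$ and $\luk$, even though the enriched Vietoris monad restricts to $\Priests{[0,1]_\wedge}$ as well, and the analogous full-faithfulness statements demonstrably fail for $\wedge$ (Example~\ref{d:ex:1} and the discussion following Theorem~\ref{d:thm:7}). An argument whose only use of the hypothesis is divisibility therefore proves too much. What actually singles out $*$ and $\luk$ is that they are, up to isomorphism, the continuous \emph{Archimedean} t-norms: $u\otimes v<u$ whenever $0<u$ and $v<1$, equivalently $0$ and $1$ are the only idempotents. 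It is this cancellation-type property, not divisibility, that lets one control a finite-weighted-colimit-preserving $\Phi$ on joins of scaled generators and close the inequality $\Phi(\psi)\le\bigvee_x\psi(x)\otimes\varphi(x)$. As written, your key step would apply verbatim to $\otimes=\wedge$, so it cannot be correct without an additional ingredient.
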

\begin{proof}
  See \citep[Theorem~9.10]{HN18}.
\end{proof}

We recall that, in the classic case
\(\PRIESTDIST \longrightarrow \FINSUP^{\op}\) mentioned in the Introduction, we
can first restrict the objects on the right-hand side to (distributive)
lattices, and then observe that those continuous distributors coming from
continuous monotone maps correspond precisely to lattice homomorphisms on the
right-hand side. We aim now at a similar result for the fully faithful functor
\eqref{d:eq:6}. To do so, we wish to identify those $[0,1]$-functors
$\Phi \colon \ftC X \longrightarrow [0,1]$ which correspond to ``the points of
$X$ inside $\ftV X$''; that is, to the $\thU$-functors of the form
$a(\doo{x},-) \colon X \longrightarrow [0,1]$. For now we are only able to do so
for the \L{}ukasiewicz tensor on \([0,1]\) employing the fact that the quantale
$[0,1]_\luk$ is a \df{Girard quantale}: for every $u\in [0,1]$,
$u=\hom(\hom(u,\bot),\bot)$. We recall that $\hom(u,\bot)=1-u$ and put
$u^\bot=1-u$. Also note that $(-)^\bot \colon [0,1] \longrightarrow [0,1]^\op$
is an isomorphism in $\Priests{[0,1]_\luk}$.

In a nutshell, our strategy is the same as in the ordered case: we show that an
additional property on $\Phi$ translates to
``$\varphi \colon X \longrightarrow [0,1]$ is irreducible'', and ``soberness''
of $X$ guarantees $\varphi=a(\doo{x},-)$, for some $x\in X$. Hence, we need to
introduce these notions for $\thU$-categories, which fortunately was already
done in \citep{CH09}. In our context, ``sober'' means \emph{Cauchy-complete}
(called \emph{Lawvere complete} in \citep{CH09}) and ``irreducible'' means
\emph{left adjoint $\thU$-distributor}. We do not introduce these notions here
but rather refer to the before-mentioned literature; however, we recall the
following two results.

\begin{theorem}\label{d:thm:1}
  An $\thU$-functor $\varphi \colon X \longrightarrow [0,1]$ (viewed as an
  $\thU$-distributor from $1$ to $X$) is left adjoint if and only if the
  representable $[0,1]$-functor
  \begin{equation*}
    [\varphi,-] \colon \Cats{\thU}(X,[0,1]) \longrightarrow [0,1],\quad
    \varphi' \longmapsto \bigwedge_{x\in X}\hom(\varphi(x),\varphi'(x))
  \end{equation*}
  preserves copowers and finite suprema.
\end{theorem}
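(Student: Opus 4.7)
The plan is to apply the standard characterisation of left adjoint modules in enriched category theory: a distributor $\varphi \colon 1 \longrightarrow X$ is left adjoint if and only if it is a \emph{small projective} object in the presheaf category, equivalently, its representable hom-functor preserves the class of colimits that generate that presheaf category. Since the enrichment is in the quantale $[0,1]$ (with topological structure encoded by $\thU$), the generating colimits can be taken to be copowers and finite suprema, which matches the statement.

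\textbf{Forward direction.} Suppose $\varphi$ admits a right adjoint $\varphi^{*} \colon X \longrightarrow 1$ in the bicategory of $\thU$-distributors. The two triangle identities imply, by a direct calculation, a natural isomorphism of $[0,1]$-functors
\begin{equation*}
[\varphi,\varphi']\;\cong\;\bigvee_{x\in X}\varphi^{*}(x)\otimes\varphi'(x),
\end{equation*}
valid for every $\varphi' \in \Cats{\thU}(X,[0,1])$. The right-hand side is manifestly cocontinuous in $\varphi'$ because joins and tensors in $[0,1]$ are themselves cocontinuous. Hence $[\varphi,-]$ preserves copowers (distributing $u\otimes-$ past the join) and arbitrary suprema, in particular the finite ones.

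\textbf{Backward direction.} Assume now that $[\varphi,-]$ preserves copowers and finite suprema. Define the candidate right adjoint $\varphi^{*} \colon X \longrightarrow 1$ pointwise by evaluating $[\varphi,-]$ on the $\thU$-analogue of representable presheaves based at $x$, following the recipe in \citep{CH09}. Then verify the two triangle inequalities: the counit inequality drops out immediately from the definition of $\varphi^{*}$, while the unit, which takes the form $k \leq \bigvee_{x\in X}\varphi^{*}(x)\otimes\varphi(x)$, is obtained by rewriting its right-hand side as $[\varphi,-]$ applied to a supremum of copowers of $\varphi$ itself. The preservation hypotheses then allow us to move $[\varphi,-]$ outside, reducing the expression to (a bound on) $[\varphi,\varphi]$, which is at least $k$ by definition of the hom.

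\textbf{Main obstacle.} The delicate point is the backward direction. The difficulty is twofold: first, one must check that the pointwise-defined $\varphi^{*}$ is actually a $\thU$-distributor (respecting the ultrafilter-indexed convergence, not merely a set-theoretic map), which uses the $\thU$-functoriality of $\varphi$ and the quantale structure; second, one must justify that preservation of \emph{finite} suprema together with copowers is sufficient. This last point rests on a Yoneda-style density argument showing that every colimit needed to witness the unit identity in $\Cats{\thU}(X,[0,1])$ can be obtained from copowers of $\varphi$ by finite joins, so that the finitary preservation hypothesis is enough to force the full triangle identity to hold.
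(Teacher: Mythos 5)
The paper does not prove this statement itself but defers entirely to \citep[Proposition~3.5]{HS11}, so your attempt must be judged against what that proof actually has to accomplish. Your forward direction is correct in spirit: a right adjoint turns $[\varphi,-]$ into ``composition with $\varphi^{*}$'', which is cocontinuous. Note, however, that for $\thU$-distributors the composite is a supremum over \emph{ultrafilters}, of the form $\bigvee_{\fx\in\ftU X}\xi\ftU\varphi'(\fx)\otimes\varphi^{*}(\fx)$, not the pointwise expression $\bigvee_{x\in X}\varphi^{*}(x)\otimes\varphi'(x)$ you wrote; showing that $\varphi'\mapsto\xi\ftU\varphi'(\fx)$ preserves finite suprema and copowers is itself a small ultrafilter argument, but the direction goes through.

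The backward direction contains a genuine gap, in two places. First, a right adjoint to $\varphi\colon 1\lmodto X$ is an $\thU$-distributor $X\lmodto 1$, i.e.\ a map $\ftU X\longrightarrow[0,1]$ defined on ultrafilters, and the unit inequality reads $k\le\bigvee_{\fx\in\ftU X}\xi\ftU\varphi(\fx)\otimes\varphi^{*}(\fx)$ (Kleisli-style composition); so ``define $\varphi^{*}$ pointwise at $x\in X$ via representables'' does not even produce an object of the correct type, and the would-be representable $a(\fx,-)$ need not be an $\thU$-functor into $[0,1]$ in the first place. Second, and more seriously, the reduction to \emph{finite} suprema cannot be obtained by a ``Yoneda-style density argument'': representables are dense only for colimits generated by copowers together with \emph{arbitrary} suprema, and without the topological structure the statement is simply false. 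For instance, in $\Cats{\two}=\ORD$ take $X=\field{Q}$ with the usual order and $\varphi$ the down-set of rationals below $\sqrt{2}$: then $[\varphi,-]$ preserves copowers and finite suprema, yet $\varphi$ is not a left adjoint distributor since it is not representable. What makes finite suprema sufficient for $\thU$-categories is compactness of the ultrafilter convergence: one uses the preservation properties to build a filter base of subsets of $X$ (exactly as in the proof of Lemma~\ref{d:lem:3} of this paper) and extracts an ultrafilter witnessing the right adjoint. That compactness step is the entire content of \citep[Proposition~3.5]{HS11} and is absent from your outline, so the proof as proposed does not go through.
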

\begin{proof}
  See \citep[Proposition~3.5]{HS11}.
\end{proof}

\begin{theorem}
  Every \(\V\)-categorical compact Hausdorff space $X$ is Cauchy complete
  (viewed as an $\thU$-category); that is, every left adjoint $\thU$-distributor
  $\varphi$ from $1$ to $X$ is of the form $\varphi=a(\doo{x},-)$, for some
  $x\in X$.
\end{theorem}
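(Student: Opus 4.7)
Plan. My plan is to produce a representing point $x_0 \in X$ for $\varphi$ by combining the adjunction $\varphi \dashv \psi$ in the bicategory of $\thU$-distributors with the compact Hausdorff convergence $\alpha\colon UX \to X$.

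First, I would unpack the adjunction data. Let $\psi$ be the right adjoint of $\varphi$, and write the $\thU$-category structure on $X$ induced by the Eilenberg--Moore algebra as $(\mathfrak{x},y)\mapsto a(\alpha(\mathfrak{x}),y)$. Applying the formula for $\thU$-distributor composition, the unit $k \leq \psi\cdot\varphi$ and counit $\varphi\cdot\psi \leq a$ unfold to
\begin{align*}
    k &\leq \bigvee_{\mathfrak{x}\in UX}\psi(\mathfrak{x})\otimes\xi(U\varphi(\mathfrak{x})), \\
    \psi(\mathfrak{x})\otimes\varphi(y) &\leq a(\alpha(\mathfrak{x}),y)\qquad(\mathfrak{x}\in UX,\ y\in X).
\end{align*}
Moreover, the $\thU$-functoriality of $\varphi\colon X\to\V$, where $\V$ carries its canonical $\thU$-category structure $(\mathfrak{v},v)\mapsto\hom(\xi(\mathfrak{v}),v)$, yields $\xi(U\varphi(\mathfrak{x}))\otimes a(\alpha(\mathfrak{x}),y)\leq \varphi(y)$.

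Next, I would use compactness to realise the supremum in the unit at a single ultrafilter. For each $u\lll k$ the set $A_u := \{\mathfrak{x}\in UX \mid \psi(\mathfrak{x})\otimes\xi(U\varphi(\mathfrak{x}))\geq u\}$ is nonempty, and a finite-intersection-property argument invoking the compact convergence $\alpha$, the interpolation of $\lll$ from complete distributivity of $\V$, together with the semicontinuity encoded in the $\thU$-distributor structure of $\psi$, produces an ultrafilter $\mathfrak{x}_0$ lying in $A_u$ for every $u\lll k$. Then $\psi(\mathfrak{x}_0)\otimes\xi(U\varphi(\mathfrak{x}_0))\geq k$; since $k$ is the top element of $\V$ in the integral quantales of interest, both factors equal $k$. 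Setting $x_0 := \alpha(\mathfrak{x}_0)$, the counit now reads $\varphi(y)\leq a(x_0,y)$, while the $\thU$-functoriality inequality reads $a(x_0,y)\leq\varphi(y)$; hence $\varphi = a(\doo{x_0},-)$.

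The main obstacle is the compactness step: the unit supplies only a supremum $\geq k$, and upgrading it to a value attained at a single ultrafilter requires careful coordination between $\alpha\colon UX \to X$, the Scott-topological characterisation of $\lll$ (Proposition~\ref{p:0}), and the compatibility of $\psi$ with $\alpha$ implicit in being a $\thU$-distributor. Once such an $\mathfrak{x}_0$ is in hand, the identification of $\varphi$ with $a(\doo{x_0},-)$ is a purely formal manipulation of the three inequalities above.
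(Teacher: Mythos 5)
The paper offers no proof of this theorem---it simply cites Corollary~4.18 of \citep{HR18}---so your argument has to be judged on its own terms. The formal skeleton is right: the unit, counit and $\thU$-functoriality inequalities are the correct unfoldings of the adjunction and of the structure $K(X,a,\alpha)=(X,a\cdot\alpha)$, and once an ultrafilter $\mathfrak{x}_0$ with $\psi(\mathfrak{x}_0)\geq k$ and $\xi U\varphi(\mathfrak{x}_0)\geq k$ is in hand, the sandwich $\varphi(y)\leq a(\alpha(\mathfrak{x}_0),y)\leq\varphi(y)$ does close the proof. (Your appeal to integrality is harmless in the section's $[0,1]$-context, though the theorem is stated for general completely distributive $\V$.)

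The genuine gap is the production of $\mathfrak{x}_0$, and the mechanism you gesture at would not work as described. A closed-sets-with-finite-intersection-property argument in the compact space $UX$ fails at the outset: $\xi U\varphi$ is indeed continuous, but the right adjoint $\psi\colon UX\to\V$ carries no topological semicontinuity---its axioms are lax-algebraic---so the sets $A_u$ need not be closed in $UX$. What does work, using the ingredients you half-name: (i) provided $\{u\mid u\lll k\}$ is directed (Assumption~\ref{d:ass:3}, true for $[0,1]$ but not in force where the theorem is stated), the nonempty sets $A_u$ form a filter base; extend it to an ultrafilter $\mathfrak{A}\in UUX$ and set $\mathfrak{x}_0:=m_X(\mathfrak{A})$. (ii) The ``semicontinuity encoded in the distributor structure of $\psi$'' is precisely the unit axiom $\xi U\psi(\mathfrak{A})\leq\psi(m_X(\mathfrak{A}))$---compatibility with $m_X$ on $UX$, not with $\alpha$. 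Every member of $\mathfrak{A}$ meets every $A_u$, on which $\psi\geq u$ and $\xi U\varphi\geq u$ by integrality, so complete distributivity gives $\psi(\mathfrak{x}_0)\geq\bigvee\{u\mid u\lll k\}=k$ and, via $\xi\cdot U\varphi\cdot m_X=\xi\cdot U(\xi\cdot U\varphi)$, also $\xi U\varphi(\mathfrak{x}_0)\geq k$. Neither Proposition~\ref{p:0}, nor interpolation of $\lll$, nor the convergence $\alpha$ is the operative tool here. With (i) and (ii) supplied your proof is complete; as written, the decisive step is asserted rather than proven, and its advertised justification points at the wrong devices.
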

\begin{proof}
  See \citep[Corollary~4.18]{HR18}.
\end{proof}

Let now $\varphi \colon X \longrightarrow [0,1]$ be an $\thU_\luk$-functor. To
link Theorem~\ref{d:thm:1} with our situation, we view $\varphi$ as a
$[0,1]_\luk$-distributor $\varphi \colon 1\lmodto X$ and note that
\begin{equation*}
  \begin{tikzcd}[row sep=large, column sep=large,ampersand replacement=\&]
    \Dists{[0,1]}(X,1) \ar[r,"{(-)^\bot}"]\ar[d,"(-\cdot\varphi)"'] \&
    \Dists{[0,1]}(1,X)^\op
    \ar[d,"[{\varphi,-]^\op}"] \\
    {[0,1]} \ar[r,"{(-)^\bot}"'] \& {[0,1]}^\op
  \end{tikzcd}
\end{equation*}
commutes in $\Cats{[0,1]_\luk}$ (see
\citep[Proposition~4.35]{HR18}). Furthermore, we can restrict the top line of
diagram above to the $[0,1]_\luk$-functor
\begin{equation*}
  (-)^\bot \colon \Cats{\thU_\luk}(X,[0,1]^\op) \longrightarrow
  \Cats{\thU_\luk}(X,[0,1])^\op,
\end{equation*}
which implies at once:

\begin{proposition}
  An $\thU_\luk$-functor $\varphi \colon X \longrightarrow [0,1]$ is a left
  adjoint $\thU_\luk$-distributor $\varphi \colon 1\lmodto X$ if and only if the
  $[0,1]_\luk$-functor
  $(-\cdot\varphi)\colon\Cats{\thU}(X,[0,1]^\op) \longrightarrow [0,1]$
  preserves powers and finite infima.
\end{proposition}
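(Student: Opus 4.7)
The plan is to derive the proposition from Theorem~\ref{d:thm:1} by transporting the preservation condition along the \L{}ukasiewicz complement $(-)^\bot$, using the commutative square displayed immediately above the statement. The key tool is that $(-)^\bot\colon[0,1]\longrightarrow[0,1]^\op$ is an isomorphism in $\Cats{[0,1]_\luk}$ (this is precisely where the Girard quantale structure of $[0,1]_\luk$ enters), and hence preserves and reflects all weighted limits and colimits.

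First, I would apply Theorem~\ref{d:thm:1}: viewing $\varphi$ as a $\thU_\luk$-distributor $1\lmodto X$, the left-adjointness of $\varphi$ is equivalent to the $[0,1]_\luk$-functor $[\varphi,-]\colon\Cats{\thU_\luk}(X,[0,1])\longrightarrow[0,1]$ preserving copowers and finite suprema. Since taking opposite $[0,1]_\luk$-categories converts copowers into powers and finite suprema into finite infima, this is the same as asking that $[\varphi,-]^\op\colon\Cats{\thU_\luk}(X,[0,1])^\op\longrightarrow[0,1]^\op$ preserves powers and finite infima.

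Next, I would invoke the commutative square. Both the restricted top arrow $(-)^\bot\colon\Cats{\thU_\luk}(X,[0,1]^\op)\longrightarrow\Cats{\thU_\luk}(X,[0,1])^\op$ and the bottom arrow $(-)^\bot\colon[0,1]\longrightarrow[0,1]^\op$ are isomorphisms in $\Cats{[0,1]_\luk}$, so precomposition with the top iso and postcomposition with the inverse of the bottom iso each preserve and reflect the property ``preserves powers and finite infima''. Hence $[\varphi,-]^\op$ preserves powers and finite infima iff $[\varphi,-]^\op\circ(-)^\bot$ does; by commutativity of the square this composite equals $(-)^\bot\circ(-\cdot\varphi)$; and the latter preserves powers and finite infima iff $(-\cdot\varphi)\colon\Cats{\thU_\luk}(X,[0,1]^\op)\longrightarrow[0,1]$ does. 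Chaining these equivalences yields the proposition.

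The step that requires most care is the bookkeeping: one must keep track of which side of each $(-)^\bot$ one is on, and verify that the op-duality in the target ($[0,1]$ versus $[0,1]^\op$) is consistently aligned with the op-duality in the source ($\Cats{\thU_\luk}(X,[0,1])^\op$ versus $\Cats{\thU_\luk}(X,[0,1]^\op)$), so that at every stage ``copowers and finite suprema'' on one side correspond exactly to ``powers and finite infima'' on the other. Once this alignment is checked, the remainder is a diagram chase built on top of Theorem~\ref{d:thm:1}.
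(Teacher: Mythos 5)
Your proposal is correct and follows essentially the same route as the paper: the paper derives the proposition ``at once'' from Theorem~\ref{d:thm:1} together with the commutative square from \citep[Proposition~4.35]{HR18} and the restriction of its top arrow to $(-)^\bot \colon \Cats{\thU_\luk}(X,[0,1]^\op) \longrightarrow \Cats{\thU_\luk}(X,[0,1])^\op$, which is exactly the transport argument you spell out. Your careful bookkeeping of the op-dualities (copowers/finite suprema versus powers/finite infima, and the two occurrences of $(-)^\bot$) correctly fills in the details the paper leaves implicit.
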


Finally, for an object $X$ in $\Priests{[0,1]_\luk}$, we will show that the
inclusion $[0,1]_\luk$-functor
$\ftC X\hookrightarrow\Cats{\thU_\luk}(X,[0,1]^\op)$ is $\bigvee$-dense. This
property guarantees that
$-\cdot\varphi \colon\Cats{\thU_\luk}(X,[0,1]^\op) \longrightarrow [0,1]$
preserves powers and finite infima if and only if
$-\cdot\varphi \colon CX \longrightarrow [0,1]$ does so.

For every $\thU_\luk$-category $(X,a)$, the $[0,1]_\luk$-subcategory
\begin{equation}\label{d:eq:2}
  \{\text{all $\thU_\luk$-functors $\varphi \colon X \longrightarrow [0,1]$}\}
  \subseteq [0,1]^X
\end{equation}
is closed under weighted limits and finite weighted colimits; we shall show now
that this property characterises the collection of all $\thU_\luk$-functors
$\varphi \colon X \longrightarrow [0,1]$. This way we transport a well-known
fact from approach theory to the ``\L{}ukasiewicz setting'' (see \citep{Low97}).

In general, every $[0,1]$-subcategory $\calR\subseteq [0,1]^X$ closed under
weighted limits and finite weighted colimits corresponds to a monad
\begin{equation*}
  \mu \colon [0,1]^X \longrightarrow [0,1]^X\qquad (1\le\mu,\;\mu\mu\le\mu)
\end{equation*}
where the $[0,1]$-functor $\mu$ preserves finite weighted colimits. Here, given
$\calR\subseteq [0,1]^X$,
\begin{equation*}
  \mu(\alpha)=
  \bigwedge\{\varphi\mid \varphi\in\calR,\quad \alpha\le\varphi\},
\end{equation*}
and, for a monad $\mu \colon [0,1]^X \longrightarrow [0,1]^X$,
\begin{equation*}
  \calR=\{\alpha\in [0,1]^X\mid\mu(\alpha)=\alpha\}.
\end{equation*}
For a subset $A\subseteq X$, we write $\chi_A \colon X \longrightarrow [0,1]$
for the characteristic function of $A$. The following key result is essentially
\citep[Proposition 1.6.5]{Low97}.

\begin{proposition}\label{d:prop:2}
  Let $\mu,\mu' \colon [0,1]^X \longrightarrow [0,1]^X$ be monads that preserve
  finite weighted colimits. Then $\mu=\mu'$ if and only if
  $\mu(\chi_A)=\mu'(\chi_A)$, for all $A\subseteq X$.
\end{proposition}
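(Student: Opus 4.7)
The `only if' direction is immediate, so assume $\mu(\chi_A) = \mu'(\chi_A)$ for every $A \subseteq X$. The plan is to consider the set $S := \{\alpha \in [0,1]^X \mid \mu(\alpha) = \mu'(\alpha)\}$ and to prove $S = [0,1]^X$ by a uniform approximation argument. First, I would observe that, since both $\mu$ and $\mu'$ preserve finite weighted colimits, $S$ is closed under them; and in the $[0,1]$-category $[0,1]^X$, finite weighted colimits are generated by finite joins together with the copowers $u \otimes (-)$ for $u \in [0,1]$. Combined with the hypothesis $\chi_A \in S$ for all $A \subseteq X$, this forces $S$ to contain every finite combination of the form $\bigvee_{i=1}^n u_i \otimes \chi_{A_i}$.

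Next, I would approximate an arbitrary $\alpha \in [0,1]^X$ by such finite combinations. For each integer $n \ge 1$, set $A_{k,n} := \{x \in X \mid \alpha(x) \ge k/n\}$ for $k = 1, \ldots, n$ and
\[
\alpha_n := \bigvee_{k=1}^n \tfrac{k}{n} \otimes \chi_{A_{k,n}},
\]
so that $\alpha_n \in S$ by the previous step. A pointwise computation (using $u \otimes 0 = 0$ and $1 \otimes \chi_A = \chi_A$) yields $\alpha_n(x) = \lfloor n\alpha(x) \rfloor / n$, hence $0 \le \alpha(x) - \alpha_n(x) \le 1/n$ uniformly in $x \in X$.

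Finally, I would pass to the limit $n \to \infty$ using the non-expansiveness of $\mu$ and $\mu'$. Every $[0,1]$-functor satisfies $[\beta, \gamma] \le [\mu(\beta), \mu(\gamma)]$ for the internal hom $[\beta,\gamma] = \bigwedge_{x \in X} \hom(\beta(x), \gamma(x))$, and continuity of $\hom$ at the diagonal of $[0,1]$ transports the uniform estimate $|\alpha(x) - \alpha_n(x)| \le 1/n$ into a corresponding uniform estimate for $\mu(\alpha)$ against $\mu(\alpha_n)$, and likewise for $\mu'$. Because $\mu(\alpha_n) = \mu'(\alpha_n)$ by the previous step, letting $n \to \infty$ forces $\mu(\alpha) = \mu'(\alpha)$, which finishes the proof. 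The main obstacle is the precise quantitative form of this non-expansiveness step: it depends on the chosen continuous tensor on $[0,1]$, but for the \L{}ukasiewicz tensor used elsewhere in this section the explicit bound $\hom(u,v) \ge 1 - |u-v|$ makes the transfer of uniform estimates completely transparent.
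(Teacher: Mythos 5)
Your argument is correct for the case the paper actually needs, and it is worth noting that the paper itself offers no proof here: it simply defers to \citep[Proposition~1.6.5]{Low97}, so you are supplying the standard discretisation argument that underlies that reference, transported from the $[0,\infty]$-setting of approach spaces to $[0,1]$. The three steps are sound: the set $S$ of functions on which $\mu$ and $\mu'$ agree is closed under finite joins and copowers (only nonempty joins and $\chi_\varnothing$ are needed, so no appeal to the empty colimit); the computation $\alpha_n(x)=\lfloor n\alpha(x)\rfloor/n$ uses exactly $u\otimes 0=0$ and $u\otimes 1=u$, both valid since $0=\bot$ and $1=k$; and monotonicity of $\mu$ together with $[\alpha,\alpha_n]\ge[\mu\alpha,\mu\alpha_n]^{-1}$-type transfer squeezes $\mu\alpha$ between $\mu\alpha_n$ and $\mu\alpha_n+1/n$ pointwise. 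The one place where care is genuinely required is the one you flag: the final step needs $\hom(u,v)\to 1$ uniformly as $v\uparrow u$, which holds for $\otimes=\luk$ via $\hom(u,v)=\min(1,1-u+v)$ but fails for the additive $1/n$-grid when $\otimes=*$ (there $\hom(u,0)=0$ for $u>0$, so $[\alpha,\alpha_n]$ can be $0$ near the bottom of the interval); for that tensor one must modify the discretisation, which is essentially what Lowen does. Since Lemma~\ref{d:lem:3} and everything downstream in this section is stated for $[0,1]_\luk$, your proof covers the intended application, but if the proposition is meant for an arbitrary continuous tensor on $[0,1]$ the last step should be reworked tensor by tensor.
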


Note that, for $\calR\subseteq [0,1]^X$ closed under weighted limits and finite
weighted colimits and with corresponding monad $\mu$, we have
\begin{equation*}
  \mu(\chi_A)(x) =\bigwedge\{\varphi\mid
  \varphi\in\calR,\chi_A\le\varphi\} =\bigwedge\{\varphi\mid
  \varphi\in\calR \text{ and, for all $z\in A$, }\varphi(z)=1\},
\end{equation*}
for all $x\in X$. For a $\thU$-category $(X,a)$, the monad $\mu$ corresponding
to \eqref{d:eq:2} is given by
\begin{equation*}
  \mu(\alpha)(x)=\bigvee_{\fx\in UX}a(\fx,x)\luk \xi U\alpha(\fx),
\end{equation*}
for all $\alpha\in [0,1]^X$. In particular, for every $A\subseteq X$,
\begin{equation*}
  \mu(\chi_A)(x)
  =\bigvee_{\fx\in UX}a(\fx,x)\luk \xi U\chi_A(\fx),\\
  =\bigvee_{\fx\in UA}a(\fx,x),
\end{equation*}
for all $x\in X$.

\begin{lemma}\label{d:lem:3}
  Let $\calR\subseteq [0,1]^X$ be a $[0,1]_\luk$-subcategory closed under
  weighted limits and finite weighted colimits and
  $a \colon UX\times X \longrightarrow [0,1]$ be the initial convergence induced
  by the cone $(\varphi \colon X \longrightarrow [0,1])_{\varphi\in\calR}$ in
  $\Cats{\thU_\luk}$. Then the following assertions hold.
  \begin{enumerate}
  \item
    $a(\fx,x)=\bigwedge\{\varphi(x)\mid \varphi\in\calR,\,\xi
    U\varphi(\fx)=1\}$, for all $\fx\in UX$ and $x\in X$.
  \item For all $A\subseteq X$ and $x\in X$,
    \begin{equation*}
      \bigwedge\{\varphi(x)\mid \varphi\in\calR \text{ and, for all
        $z\in A$, }\varphi(z)=1\} =\bigvee_{\fx\in UA}a(\fx,x).
    \end{equation*}
  \end{enumerate}
\end{lemma}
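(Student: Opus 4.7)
The plan is to prove (i) by a power (cotensor) construction leveraging $\calR$'s closure under weighted limits, and then to derive (ii) from (i) together with an argument about the ``$\chi_A$-value'' of the monad $\mu_a$ associated with the initial $\thU_\luk$-structure on $X$.

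For (i), the standard formula for initial cones in $\Cats{\thU_\luk}$ gives
\[
a(\fx, x) = \bigwedge_{\varphi \in \calR} \hom\bigl(\xi U\varphi(\fx),\, \varphi(x)\bigr).
\]
The inequality ``$\le$'' is immediate: restricting the infimum to those $\varphi$ with $\xi U\varphi(\fx) = 1$ reduces each term to $\hom(1, \varphi(x)) = \varphi(x)$. For ``$\ge$'', fix $\varphi \in \calR$ and set $u := \xi U\varphi(\fx)$. The cotensor $\varphi' := \hom(u, -) \circ \varphi$ is a weighted limit of $\varphi$ by $u$, hence lies in $\calR$. Since $\hom(u, -) \colon [0,1] \to [0,1]$ is continuous for the Lawson (= Euclidean) topology on $[0,1]$, whose ultrafilter convergence is $\xi$, one computes $\xi U\varphi'(\fx) = \hom(u, \xi U\varphi(\fx)) = \hom(u, u) = 1$. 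Thus $\varphi'$ participates in the right-hand infimum of (i) with value $\varphi'(x) = \hom(u, \varphi(x))$, and taking the infimum over $\varphi \in \calR$ reproduces $a(\fx, x)$.

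For (ii), the direction $\mathrm{RHS} \le \mathrm{LHS}$ follows directly from (i): given $\varphi \in \calR$ with $\varphi|_A = 1$ and $\fx \in UA$, the hypothesis $A \in \fx$ forces $U\varphi(\fx) = \doo 1$ and so $\xi U\varphi(\fx) = 1$; then (i) yields $a(\fx, x) \le \varphi(x)$, and supremising over $\fx \in UA$ followed by infimising over $\varphi$ concludes.

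The converse $\mathrm{LHS} \le \mathrm{RHS}$ is the main difficulty. My strategy is to show that the function $\mu_a(\chi_A)(y) := \bigvee_{\fx \in UA} a(\fx, y)$ itself belongs to $\calR$: since $\mu_a(\chi_A) \ge \chi_A$ (using that the unit $k = 1$ of $[0,1]_\luk$ forces $a(\doo z, z) = 1$ for $z \in A$), this would place $\mu_a(\chi_A)$ in the indexing set of LHS and hence give $\mathrm{LHS} \le \mu_a(\chi_A)(x) = \mathrm{RHS}$. Part (i) already gives each $a(\fx, -) = \bigwedge_{\varphi \in \calR,\, \xi U\varphi(\fx) = 1}\varphi$ in $\calR$ as an arbitrary infimum of elements of $\calR$; the genuine obstacle is that the defining supremum over $\fx \in UA$ is typically not finite, whereas $\calR$ is only closed under \emph{finite} weighted colimits. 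I expect this gap to be bridged either by exploiting the Girard involution of $[0,1]_\luk$ (expressing the arbitrary supremum as the involutive dual of an arbitrary infimum, which is a weighted limit) or by a compactness argument on $UA$ combined with upper-semicontinuity of $\fx \mapsto a(\fx, y)$ induced by the initial structure.
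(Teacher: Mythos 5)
Your part (i) and the easy direction of part (ii) are correct and essentially the paper's own argument. The gap is in the hard direction of (ii), which you explicitly leave open: you reduce it to showing that $\mu_a(\chi_A)=\bigvee_{\fx\in UA}a(\fx,-)$ lies in $\calR$, but neither of your two proposed bridges closes this. The Girard involution does not help: writing $\bigvee_{\fx}a(\fx,y)$ as $\bigl(\bigwedge_{\fx}\hom(a(\fx,y),\bot)\bigr)^{\bot}$ would require $\calR$ to be closed under $\varphi\mapsto\hom(\varphi,\bot)$, which is neither a weighted limit nor a finite weighted colimit (powers are of the form $\hom(u,\varphi)$ for a \emph{constant} $u$, not $\hom(\varphi,u)$), so there is no reason for $\calR$ to contain these duals. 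Worse, your intermediate goal is essentially circular: the element of $\calR$ that naturally dominates $\chi_A$ is $\mu_{\calR}(\chi_A)=\bigwedge\{\varphi\in\calR\mid\chi_A\le\varphi\}$ (an infimum, hence a weighted limit of members of $\calR$), whose value at $x$ is exactly the left-hand side of (ii); the membership $\mu_a(\chi_A)\in\calR$ would follow from the identity $\mu_a(\chi_A)=\mu_{\calR}(\chi_A)$ --- but that identity is precisely what assertion (ii) states.

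What the paper does instead is a direct filter-base construction, which is the concrete form of the ``compactness argument'' you gesture at. Set $u$ equal to the left-hand side, fix $v<u$ and put $\varepsilon=u-v$. The key observation is: every $\varphi\in\calR$ with $\varphi(x)<v$ must satisfy $\varphi(z)<1-\varepsilon$ for some $z\in A$, since otherwise $\hom(1-\varepsilon,\varphi)\in\calR$ would be identically $1$ on $A$ yet equal to $\varphi(x)+\varepsilon<u$ at $x$, contradicting the definition of $u$. Combined with closure of $\calR$ under finite suprema (this is where the finite weighted colimits enter), this gives the finite intersection property for $\{\varphi^{-1}([0,1-\varepsilon])\mid\varphi\in\calR,\ \varphi(x)<v\}\cup\{A\}$; any ultrafilter $\fx$ refining this family lies in $UA$ and satisfies $\xi U\varphi(\fx)\le 1-\varepsilon$ for every $\varphi\in\calR$ with $\varphi(x)<v$, whence $a(\fx,x)\ge v$ by part (i). Letting $v\to u$ yields the missing inequality. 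You would need to supply this (or an equivalent) argument to complete the proof.
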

\begin{proof}
  To see the first statement, note that
  \begin{equation*}
    a(\fx,x) =\bigwedge\{\hom(\xi U\varphi(\fx),\varphi(x))\mid
    \varphi\in\calR\} \le\bigwedge\{\varphi(x)\mid
    \varphi\in\calR,\,\xi U\varphi(\fx)=1\}.
  \end{equation*}
  On the other hand, for every $\varphi\in\calR$, put $u=\xi
  U\varphi(\fx)$. Then $\hom(u,\varphi)\in\calR$ and
  $\xi U(\hom(u,\varphi))(\fx)=1$, which proves the assertion. Regarding the
  second statement, the inequality
  \begin{equation*}
    \bigwedge\{\varphi(x)\mid \varphi\in\calR \text{ and, for all
      $z\in A$, }\varphi(z)=1\} \ge \bigvee_{\fx\in UA}a(\fx,x)
  \end{equation*}
  is certainly true. To see the opposite inequality, put
  \begin{equation*}
    u=\bigwedge\{\varphi(x)\mid \varphi\in\calR \text{ and, for all
      $z\in A$, }\varphi(z)=1\}.
  \end{equation*}
  Let $v<u$ und put $\varepsilon=u-v$, then $\hom(u,v)=1-\varepsilon$. For every
  $\varphi\in\calR$ with $\varphi(x)<v$, there exists some $z\in A$ with
  $\varphi(z)<1-\varepsilon$. In fact, if $\varphi(z)\ge 1-\varepsilon$ for all
  $z\in A$, then $\hom(1-\varepsilon,\varphi(z))=1$ for all $z\in A$, but
  $\hom(1-\varepsilon,\varphi(x))=\varphi(x)+\varepsilon<u$. Therefore
  \begin{equation*}
    \ff=\{\varphi^{-1}([0,1-\varepsilon])\mid \varphi\in\calR,
    \varphi(x)<v\}\cup\{A\}
  \end{equation*}
  is a filter base, let $\fx$ be an ultrafilter finer than $\ff$. Then, for
  every $\varphi\in\calR$ with $\varphi(x)<v$,
  $\xi U\varphi(\fx)\le 1-\varepsilon$. Therefore
  \begin{equation*}
    a(\fx,x)=\bigwedge\{\varphi(x)\mid \varphi\in\calR,\,\xi
    U\varphi(\fx)=1\}\ge v.\qedhere
  \end{equation*}
\end{proof}

From Proposition~\ref{d:prop:2} and Lemma~\ref{d:lem:3} we conclude now:

\begin{corollary}
  Let $\calR\subseteq [0,1]^X$ be a $[0,1]_\luk$-subcategory closed under
  weighted limits and finite weighted colimits. Then
  \begin{equation*}
    \calR=\{\text{all $\thU_\luk$-functors $\varphi \colon X \longrightarrow [0,1]$}\},
  \end{equation*}
  where we consider on $X$ the initial convergence
  $a \colon UX\times X \longrightarrow [0,1]$ induced by $\calR$.
\end{corollary}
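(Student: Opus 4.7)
The plan is to deduce the corollary directly from Proposition~\ref{d:prop:2} and Lemma~\ref{d:lem:3}, by showing that $\calR$ and the ambient set of all $\thU_\luk$-functors define the same monad on $[0,1]^X$.

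First I would set $\calR'=\{\text{all $\thU_\luk$-functors $\varphi \colon X \longrightarrow [0,1]$}\}$ with respect to the initial convergence $a$ induced by $\calR$. By the very definition of initiality, each $\varphi\in\calR$ is a $\thU_\luk$-functor $(X,a)\longrightarrow[0,1]$, so $\calR\subseteq\calR'$. Both $\calR$ and $\calR'$ are $[0,1]_\luk$-subcategories of $[0,1]^X$ closed under weighted limits and finite weighted colimits: this was noted explicitly for $\calR'$ just before \eqref{d:eq:2}, and it is the hypothesis on $\calR$. Consequently, each gives rise to a corresponding idempotent monad $\mu,\mu'\colon [0,1]^X\longrightarrow [0,1]^X$ preserving finite weighted colimits, with fixed points $\calR$ and $\calR'$ respectively.

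Next I would invoke Proposition~\ref{d:prop:2}: to conclude $\mu=\mu'$ it suffices to check equality on characteristic functions $\chi_A$ for $A\subseteq X$. For the monad $\mu'$ associated with $\calR'$ the text already records the formula
\begin{equation*}
  \mu'(\chi_A)(x)=\bigvee_{\fx\in UA}a(\fx,x).
\end{equation*}
For the monad $\mu$ associated with $\calR$, the same formula is produced by Lemma~\ref{d:lem:3}: part (ii) identifies $\bigwedge\{\varphi(x)\mid \varphi\in\calR,\;\forall z\in A\colon\varphi(z)=1\}$ with $\bigvee_{\fx\in UA}a(\fx,x)$, and the left-hand side is precisely $\mu(\chi_A)(x)$ by the description of $\mu$ recalled above the lemma. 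Hence $\mu(\chi_A)=\mu'(\chi_A)$ for every $A\subseteq X$, so $\mu=\mu'$ and therefore $\calR=\calR'$, as required.

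The only nontrivial step is the second, and that work is already carried out in Lemma~\ref{d:lem:3}; once one notices that both $\calR$ and $\calR'$ are determined by monads preserving finite weighted colimits, the proof reduces to matching two closed-form expressions on characteristic functions, so there is no real obstacle beyond assembling these ingredients.
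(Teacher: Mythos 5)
Your proof is correct and is exactly the argument the paper intends: the corollary is stated there with no explicit proof beyond ``From Proposition~\ref{d:prop:2} and Lemma~\ref{d:lem:3} we conclude,'' and your write-up supplies precisely the missing assembly — both $\calR$ and the set of all $\thU_\luk$-functors for the initial convergence $a$ correspond to finite-weighted-colimit-preserving monads on $[0,1]^X$, and Lemma~\ref{d:lem:3}(ii) together with the displayed formula $\mu(\chi_A)(x)=\bigvee_{\fx\in UA}a(\fx,x)$ shows they agree on characteristic functions, so Proposition~\ref{d:prop:2} finishes the job.
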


\begin{corollary}
  Let $\calR,\calR'\subseteq [0,1]^X$ be $[0,1]_\luk$-subcategories closed under
  weighted limits and finite weighted colimits. If $\calR$ and $\calR'$ induce
  the same convergence, then $\calR=\calR'$.
\end{corollary}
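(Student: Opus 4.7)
The plan is to derive this as an immediate consequence of the previous corollary. That corollary tells us that any $[0,1]_\luk$-subcategory $\calR \subseteq [0,1]^X$ which is closed under weighted limits and finite weighted colimits is completely determined by the initial convergence it induces on $X$: namely, $\calR$ coincides with the set of \emph{all} $\thU_\luk$-functors $\varphi \colon (X,a) \longrightarrow [0,1]$, where $a \colon UX \times X \longrightarrow [0,1]$ is that initial convergence.

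Concretely, I would apply the previous corollary to $\calR$ to obtain $\calR = \{\varphi \colon (X,a) \longrightarrow [0,1] \text{ an } \thU_\luk\text{-functor}\}$, where $a$ is the initial convergence induced by $\calR$. Applying it again to $\calR'$, we get $\calR' = \{\varphi \colon (X,a') \longrightarrow [0,1] \text{ an } \thU_\luk\text{-functor}\}$, where $a'$ is the initial convergence induced by $\calR'$. By hypothesis $a = a'$, and hence the two right-hand sides coincide, yielding $\calR = \calR'$.

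There is essentially no obstacle to overcome here: the entire content of the statement has already been absorbed into the preceding corollary, which in turn rested on Proposition~\ref{d:prop:2} and Lemma~\ref{d:lem:3}. The only thing worth remarking explicitly in the write-up is that the notation ``the convergence induced by $\calR$'' in the hypothesis unambiguously refers to the initial convergence in $\Cats{\thU_\luk}$ along the cone $(\varphi \colon X \longrightarrow [0,1])_{\varphi\in\calR}$, so that the identification used in the previous corollary applies verbatim on both sides.
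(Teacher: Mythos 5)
Your argument is correct and is exactly the intended derivation: the paper states this corollary without proof as an immediate consequence of the preceding one, which identifies $\calR$ with the set of all $\thU_\luk$-functors for the initial convergence it induces. Applying that identification to both $\calR$ and $\calR'$ and using the equality of the two convergences, as you do, is precisely what is needed.
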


We return now \([0,1]_{\luk}\)-enriched Priestley spaces.

\begin{corollary}
  Let $X$ be in $\Priests{[0,1]_\luk}$ and $\calR$ be the closure of
  $\Priests{[0,1]_\luk}(X,[0,1])$ in $[0,1]^X$ under infima. Then the
  $[0,1]_\luk$-subcategory $\calR\subseteq [0,1]^X$ is closed under weighted
  limits and finite weighted colimits.
\end{corollary}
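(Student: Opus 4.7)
The plan is to unpack $\calR$ concretely and verify closure under the generators of all weighted limits and of finite weighted colimits: powers $\hom(u,-)$, copowers $u\luk -$, and finite suprema. By definition every $\varphi\in\calR$ has the shape $\varphi=\bigwedge_{i\in I}\psi_{i}$ with each $\psi_{i}\in\Priests{[0,1]_\luk}(X,[0,1])$, and closure under arbitrary infima is immediate from the definition.

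The uniform observation at the heart of the argument is the following: if $F\colon[0,1]\longrightarrow[0,1]$ is a morphism in $\Priests{[0,1]_\luk}$ (that is, non-expansive with respect to $\hom(v,w)=\min(1,1-v+w)$ and continuous with respect to the Euclidean topology) and $F$ preserves infima, then post-composition with $F$ maps $\calR$ into itself: each $F\circ\psi_{i}$ is again a morphism, and $F\circ\bigwedge_{i}\psi_{i}=\bigwedge_{i}F\circ\psi_{i}$ lies in $\calR$. For $F=\hom(u,-)$, non-expansiveness is a short \L{}ukasiewicz computation and preservation of infima holds because $F$ is a right adjoint; for $F=u\luk -$, non-expansiveness is equally routine and preservation of arbitrary infima follows from continuity and monotonicity of $F$ as a self-map of the chain $[0,1]$. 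This disposes of powers and copowers.

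Finite suprema require the additional ingredient of distributivity. The join $\vee\colon[0,1]\times[0,1]\longrightarrow[0,1]$ is non-expansive with respect to the product \L{}ukasiewicz structure and continuous, hence $\psi\vee\psi'\in\Priests{[0,1]_\luk}(X,[0,1])$ whenever both $\psi,\psi'$ belong to that hom-set. Applying the distributive identity $(\bigwedge_{i}a_{i})\vee(\bigwedge_{j}b_{j})=\bigwedge_{i,j}(a_{i}\vee b_{j})$ pointwise shows that binary joins of elements of $\calR$ again lie in $\calR$. The empty join is the constant map $0\colon X\longrightarrow[0,1]$, which is already a morphism and so lies in $\calR$.

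I do not anticipate a substantive obstacle: all three verifications reduce to \L{}ukasiewicz arithmetic together with the fact that a continuous monotone self-map of the chain $[0,1]$ commutes with arbitrary infima and suprema. These operations suffice because every weighted limit in a $[0,1]_\luk$-enriched setting decomposes into arbitrary products and powers, while every finite weighted colimit decomposes into finite conical coproducts and copowers by elements of the quantale.
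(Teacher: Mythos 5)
Your proof is correct and takes essentially the same route as the paper: reduce to the generating operations (powers $\hom(u,-)$, copowers $u\luk-$, finite suprema, arbitrary infima), observe that the relevant unary and binary operations on $[0,1]$ are morphisms in $\Priests{[0,1]_\luk}$, and push them through infima using right-adjointness of $\hom(u,-)$, continuity-plus-monotonicity of $u\luk-$, and the distributive law $\left(\bigwedge_{i}a_{i}\right)\vee\left(\bigwedge_{j}b_{j}\right)=\bigwedge_{i,j}(a_{i}\vee b_{j})$. The only nitpick is that $u\luk-$ preserves only \emph{non-empty} infima (it does not fix $1$), as the paper is careful to state; but the empty infimum is the constant map $1$, whose image under $u\luk-$ is the constant map $u$, itself already a morphism, so nothing breaks.
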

\begin{proof}
  Since the maps
  \begin{align*}
    \vee \colon [0,1]\times [0,1] & \longrightarrow [0,1],
    & [0,1] &\longrightarrow [0,1],\;u \longmapsto 0,\\
    \wedge \colon [0,1]\times [0,1] & \longrightarrow [0,1],
    & [0,1] &\longrightarrow [0,1],\;u \longmapsto 1
  \end{align*}
  as well as the maps
  \begin{displaymath}
    \hom(u,-)\colon [0,1] \longrightarrow [0,1]
    \quad\text{and}\quad
    u\luk- \colon [0,1] \longrightarrow [0,1]
    \qquad (u\in [0,1])
  \end{displaymath}
  are morphisms in $\Priests{[0,1]_\luk}$, the \([0,1]_{\luk}\)-subcategory
  $\Priests{[0,1]_\luk}(X,[0,1])$ of $[0,1]^X$ is closed under finite weighted
  limits and finite weighted colimits. Clearly, \(\mathcal{R}\subseteq
  [0,1]^{X}\) is closed under all weighted limits. Since
  \begin{displaymath}
    \left ( \bigwedge_{i\in I}\varphi_{i}\right)\vee \left( \bigwedge_{i\in J}\varphi_{j}\right)
    =\bigwedge_{(i,j)\in I\times J}(\varphi_{i}\vee\varphi_{j}),
  \end{displaymath}
  \(\mathcal{R}\) is closed in $[0,1]^X$ under binary suprema, and
  \(\mathcal{R}\) is closed in $[0,1]^X$ under tensors since \(u\luk-\)
  preserves non-empty infima.
\end{proof}

\begin{corollary}
  Let $X$ be in $\Priests{[0,1]_\luk}$. Then every $\thU_\luk$-functor
  $X \longrightarrow [0,1]$ is an infimum of morphisms $X \longrightarrow [0,1]$
  in $\Priests{[0,1]_\luk}$.
\end{corollary}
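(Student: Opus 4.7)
The plan is to show that $\calR$, defined as the closure of $\Priests{[0,1]_\luk}(X,[0,1])$ in $[0,1]^{X}$ under infima, coincides with the collection of all $\thU_\luk$-functors $X\longrightarrow[0,1]$; this is precisely the desired statement. The immediately preceding corollary guarantees that $\calR$ is a $[0,1]_\luk$-subcategory of $[0,1]^{X}$ closed under weighted limits and finite weighted colimits, so the earlier corollary characterising such subcategories identifies $\calR$ with the collection of all $\thU_\luk$-functors $X\longrightarrow[0,1]$ with respect to the initial $\thU_\luk$-convergence induced by $\calR$ on $X$. Hence the claim reduces to verifying that this initial convergence is precisely the given $\thU_\luk$-convergence $a$ on $X$.

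To establish this, I would combine the Priestley hypothesis with the isomorphism $(-)^{\bot}\colon[0,1]\longrightarrow[0,1]^{\op}$ in $\Priests{[0,1]_\luk}$ recorded earlier in this section. Since $X$ is Priestley, the cone $\Priests{[0,1]_\luk}(X,[0,1]^{\op})$ is point-separating and initial in $\CatCHs{[0,1]_\luk}$, and composing with $(-)^{\bot}$ transfers this to the cone $\Priests{[0,1]_\luk}(X,[0,1])$. Via the topological theory $(\mU,[0,1],\xi)$, initiality in $\CatCHs{[0,1]_\luk}$ lifts to initiality at the level of $\thU_\luk$-categories, so the initial convergence induced by $\Priests{[0,1]_\luk}(X,[0,1])$ is exactly $a$. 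Moreover, every morphism of $\Priests{[0,1]_\luk}$ is a $\thU_\luk$-functor for $a$, and pointwise infima of $\thU_\luk$-functors remain $\thU_\luk$-functors; thus every element of $\calR$ is a $\thU_\luk$-functor for $a$. Enlarging the cone from $\Priests{[0,1]_\luk}(X,[0,1])$ to $\calR$ by adding functions that are already $\thU_\luk$-functors for $a$ cannot alter the induced initial convergence, so the initial convergence induced by $\calR$ is still $a$.

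The step I expect to be the main obstacle is the passage from ``initiality of the Priestley cone in $\CatCHs{[0,1]_\luk}$'' to ``initiality at the level of the induced $\thU_\luk$-category''. This amounts to matching the explicit initial-cone formula of Theorem~\ref{d:thm:8} with the convergence on $X$ produced by the topological theory $(\mU,[0,1],\xi)$ out of $(X,a,\alpha)$; once this compatibility is in hand, the remainder of the argument is a routine chaining of the three preceding corollaries.
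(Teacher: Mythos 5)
Your proposal is correct and follows essentially the same route as the paper's proof: the paper likewise uses the isomorphism $[0,1]\simeq[0,1]^{\op}$ in $\CatCHs{[0,1]_\luk}$ to transfer initiality of the Priestley cone to $\Priests{[0,1]_\luk}(X,[0,1])$, invokes the fact that the functor $\CatCHs{[0,1]_\luk}\longrightarrow\Cats{\thU_\luk}$ preserves initial mono-cones, and then concludes via the two preceding corollaries that the infimum-closure of this cone is all of $\Cats{\thU_\luk}(X,[0,1])$. The step you flag as the main obstacle is exactly the point the paper settles by citing preservation of initial mono-cones, and your observation that enlarging the cone by $\thU_\luk$-functors does not change the induced initial convergence is a correct (if implicit in the paper) detail.
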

\begin{proof}
  Since $[0,1]\simeq [0,1]^\op$ in $\Cats{[0,1]_\luk}^\mU$ and $X$ is Priestley,
  the cone $\Priests{[0,1]_\luk}(X,[0,1])$ is point-separating and initial with
  respect to $\CatCHs{[0,1]_\luk} \longrightarrow\COMPHAUS$. Then, since the
  functor $K \colon \CatCHs{[0,1]_\luk}\longrightarrow\Cats{\thU_\luk}$
  preserves initial mono-cones, the closure of $\Priests{[0,1]_\luk}(X,[0,1])$
  in $[0,1]^X$ under infima coincides with $\Cats{\thU_\luk}(X,[0,1])$.
\end{proof}

Using the isomorphism $(-)^\bot\colon [0,1] \longrightarrow [0,1]^\op$, we
obtain the desired result.

\begin{corollary}
  For every $X$ in $\Priests{[0,1]_\luk}$, the inclusion
  $\ftC X\hookrightarrow\Cats{\thU_\luk}(X,[0,1]^\op)$ is
  $\bigvee$-dense. Therefore, for every $\thU_\luk$-functor
  $\varphi \colon X \longrightarrow [0,1]$, the $[0,1]_\luk$-functor
  \begin{displaymath}
    (-\cdot\varphi)\colon\Cats{\thU_\luk}(X,[0,1]^\op)\longrightarrow [0,1]
  \end{displaymath}
  preserves finite weighted limits if and only if the $[0,1]_\luk$-functor
  $(-\cdot\varphi)\colon\ftC X \longrightarrow [0,1]$ does so.
\end{corollary}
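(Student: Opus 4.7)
The plan is to obtain the $\bigvee$-density statement as a direct translation of the preceding corollary, and then to deduce the second assertion by a standard density argument. For the first assertion, the key is that $(-)^{\bot}\colon [0,1] \longrightarrow [0,1]^{\op}$ is an isomorphism in $\Priests{[0,1]_\luk}$. Post-composition with $(-)^{\bot}$ sets up a bijection between $\thU_\luk$-functors $X \longrightarrow [0,1]$ and $X \longrightarrow [0,1]^\op$, which restricts to a bijection between the respective sets of $\Priests{[0,1]_\luk}$-morphisms. Since the underlying set map $(-)^{\bot}$ is order-reversing, pointwise infima in $[0,1]^X$ correspond to pointwise suprema in $([0,1]^{\op})^X$. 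The preceding corollary therefore translates into the statement that every $\thU_\luk$-functor $X \longrightarrow [0,1]^\op$ is a supremum of elements of $\ftC X$, which is exactly the required $\bigvee$-density.

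For the second assertion, I would first observe that $(-\cdot\varphi)$ preserves $\bigvee$-colimits by its explicit formula $\psi \longmapsto \bigvee_{x\in X} \psi(x)\otimes\varphi(x)$, using that $\otimes = \luk$ distributes over suprema and that joins in $\Cats{\thU_\luk}(X,[0,1]^{\op})$ are computed pointwise. Consequently, the $[0,1]_\luk$-functor $(-\cdot\varphi)\colon \Cats{\thU_\luk}(X,[0,1]^{\op}) \longrightarrow [0,1]$ is already the $\bigvee$-continuous extension of its restriction along the $\bigvee$-dense inclusion $\ftC X \hookrightarrow \Cats{\thU_\luk}(X,[0,1]^{\op})$. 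A standard density argument then guarantees that preservation of any class of limits that interchanges appropriately with $\bigvee$-colimits — in particular the finite weighted limits at hand, which by an earlier proposition are generated by finite infima and powers by elements of $[0,1]$ — can be tested on $\ftC X$. The trivial ``only if'' direction is automatic because $\ftC X$ is closed in $\Cats{\thU_\luk}(X,[0,1]^{\op})$ under the operations in question (each of $\wedge$ and $\hom(u,-)$ being a morphism in $\Priests{[0,1]_\luk}$).

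The main obstacle is making precise the ``standard density argument'' in the last step: one needs to verify that finite weighted limits in the target $[0,1]$ indeed interchange with the $\bigvee$-colimits through which an arbitrary element of $\Cats{\thU_\luk}(X,[0,1]^{\op})$ is reconstructed from $\ftC X$. Since $[0,1]_\luk$ is completely distributive and all operations involved are evaluated pointwise, this boils down to distributivity identities in $[0,1]$, but a careful write-up should spell them out — in fact, this is precisely the same interchange of suprema and finite pointwise limits that was used implicitly in the preceding corollary to show closure of $\mathcal{R}$ under binary suprema and tensors.
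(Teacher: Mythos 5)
Your proposal is correct and takes essentially the same route as the paper, which likewise obtains the density statement by transporting the preceding corollary along the isomorphism $(-)^{\bot}\colon [0,1]\longrightarrow [0,1]^{\op}$ and treats the ``therefore'' as the standard density argument you describe (using that $(-\cdot\varphi)$ preserves the suprema in question). The interchange you rightly flag as needing to be spelled out is exactly the pair of identities already invoked in the proof of the preceding corollary --- distributivity of binary suprema over infima and preservation of non-empty infima by $u\luk -$, transported along $(-)^{\bot}$ --- so no new ingredient is required.
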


We let $\FinLats{[0,1]_{\luk}}$ denote the category of finitely complete and
finitely cocomplete $[0,1]_{\luk}$-categories and $[0,1]_{\luk}$-functors that
preserve finite weighted limits and colimits. We note that
$\FinLats{[0,1]_{\luk}}$ is a $\aleph_1$-ary quasivariety which can be shown as
in \citep[Remark~2.10]{HN18} by adding operations and equations for powers and
finite infima. From the results above we obtain:

\begin{theorem}\label{d:thm:2}
  The fully faithful functor
  \begin{equation*}
    \ftC \colon\left(\Priests{[0,1]_\luk}_\mV\right)^\op
    \longrightarrow\FinSups{[0,1]_\luk}
  \end{equation*}
  restricts to a fully faithful adjoint functor
  \begin{equation*}
    \ftC \colon (\Priests{[0,1]_\luk})^\op\longrightarrow\FinLats{[0,1]_\luk}.
  \end{equation*}
\end{theorem}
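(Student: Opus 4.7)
The plan is to identify, within the fully faithful $\ftC$ of Theorem~\ref{d:thm:3}, those Kleisli morphisms $\varphi \colon Y \to X$ in $\Priests{[0,1]_\luk}_\mV$ that come from genuine $\thU_\luk$-functors $g \colon Y \to X$ in $\Priests{[0,1]_\luk}$, and to match these with precisely the $[0,1]_\luk$-functors $\ftC X \to \ftC Y$ that preserve finite weighted limits in addition to the finite weighted colimits already guaranteed by Theorem~\ref{d:thm:3}. A Kleisli morphism $\varphi$ is an $\thU_\luk$-functor $Y \to \ftV X$, and so it decomposes as a family of slice distributors $\varphi_y \colon 1 \lmodto X$, one for each $y \in Y$. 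Cauchy completeness of $\V$-categorical compact Hausdorff spaces (cited after Theorem~\ref{d:thm:1}) then tells us that $\varphi$ factors through the Yoneda embedding $X \hookrightarrow \ftV X$, that is, $\varphi_y = a_X(\doo{g(y)},-)$ for some function $g \colon Y \to X$, precisely when every $\varphi_y$ is a left adjoint $\thU_\luk$-distributor.

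The translation of this condition across $\ftC$ is carried out slicewise. Under the monad-morphism isomorphism $j$ of Theorem~\ref{d:thm:3}, the pointwise evaluation $\mathrm{ev}_y \circ \ftC\varphi \colon \ftC X \to [0,1]$ coincides, up to the $(-)^\bot$ identification, with the $[0,1]_\luk$-functor $(-\cdot\varphi_y)$. The density corollary proved just before the statement then shows that $(-\cdot\varphi_y)\colon\ftC X \to [0,1]$ preserves finite weighted limits if and only if its extension $(-\cdot\varphi_y)\colon\Cats{\thU_\luk}(X,[0,1]^{\op}) \to [0,1]$ does, and the Girard-quantale involution $(-)^\bot$, combined with Theorem~\ref{d:thm:1}, establishes the latter to be equivalent to $\varphi_y$ being a left adjoint distributor. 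Since finite-weighted-limit preservation by a $[0,1]_\luk$-functor into $\ftC Y$ can be checked pointwise at each $\mathrm{ev}_y$, I would thus conclude that $\ftC\varphi \in \FinLats{[0,1]_\luk}$ if and only if $\varphi$ comes from an underlying function $g$; full faithfulness of the restricted functor is then inherited from that of Theorem~\ref{d:thm:3}.

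The adjoint assertion I would obtain by combining the adjunction represented by the dotted arrow in the diagram preceding Theorem~\ref{d:thm:3} with the reflectivity of $\FinLats{[0,1]_\luk}$ inside $\FinSups{[0,1]_\luk}$ (via closure under finite weighted limits) and the analogous inclusion of $\Priests{[0,1]_\luk}$ into $\Priests{[0,1]_\luk}_\mV$ afforded by the Kleisli unit; a standard adjoint-lifting argument then produces the required adjoint at the restricted level. The main technical obstacle I anticipate is the slicewise reconstruction step: once each $\varphi_y$ has been realised as $a_X(\doo{g(y)},-)$, one must verify that the resulting $g \colon Y \to X$ is genuinely a morphism in $\Priests{[0,1]_\luk}$, not merely a set-theoretic function. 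This should follow from initiality of the Priestley cone $(\psi \colon X \to [0,1]^{\op})_\psi$ together with the original continuity of $\varphi$ as an $\thU_\luk$-functor into $\ftV X$, but keeping track of the convergence and $\mU$-algebra structures under the Yoneda factorisation will require some care.
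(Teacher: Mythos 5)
Your identification of the Kleisli morphisms coming from genuine morphisms of $\Priests{[0,1]_\luk}$ --- the slicewise decomposition into distributors $\varphi_y \colon 1\lmodto X$, the translation of left-adjointness into preservation of finite weighted limits by $(-\cdot\varphi_y)$ on $\ftC X$ via the Girard involution, Theorem~\ref{d:thm:1} and the $\bigvee$-density corollary, and finally Cauchy completeness to recover $\varphi_y=a(\doo{g(y)},-)$ --- is exactly the argument the paper intends: the theorem is stated there as a direct consequence of the preceding results, whose announced strategy (``an additional property on $\Phi$ translates to irreducibility, and soberness guarantees representability'') you reproduce faithfully. Your residual worry about $g$ being a morphism rather than a mere map is also settled as you suggest, since the unit $Y\longrightarrow\ftV Y$ is an initial embedding in $\Priests{[0,1]_\luk}$, so the factorisation through it is automatically a morphism.

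The one step that would not go through as written is your derivation of the adjointness. The category $\FinLats{[0,1]_\luk}$ is \emph{not} a full subcategory of $\FinSups{[0,1]_\luk}$ --- its morphisms must in addition preserve finite weighted limits --- so it is not ``reflective inside $\FinSups{[0,1]_\luk}$ via closure under finite weighted limits'', and restricting the codomain of the adjoint functor $\ftC\colon\Priests{[0,1]_\luk}\longrightarrow\FinSups{[0,1]_\luk}^\op$ along the non-full forgetful functor $\FinLats{[0,1]_\luk}\longrightarrow\FinSups{[0,1]_\luk}$ does not preserve adjointness; an adjoint-lifting argument has no purchase here. The adjoint is instead obtained directly from the natural dual adjunction induced by the dualising object $[0,1]$ in the sense of \citep{PT91}: $\FinLats{[0,1]_\luk}$ is an $\aleph_1$-ary quasivariety, hence complete and cocomplete; the closure corollaries equip $\hom(X,[0,1]^{\op})$ with the structure of an object of $\FinLats{[0,1]_\luk}$; and for every $L$ in $\FinLats{[0,1]_\luk}$ the initial structure on $\hom(L,[0,1])$ is Priestley by construction. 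This is made explicit in the remark following Theorem~\ref{d:thm:4}. The repair is routine, but the mechanism you propose is the wrong one.
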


\section{Duality theory for enriched Priestley spaces: abstractly}
\label{sec:dual-theory-enrich-abstr}

In Section~\ref{sec:dual-theory-enrich-conc} we presented some duality results
for the category \(\Priests{[0,1]_\luk}\) which in particular expose some
algebraic flavour of \(\Priests{[0,1]_\luk}^{\op}\). For a general quantale
\(\V\), we are still far away from concrete duality results, and in this section
we investigate properties of \(\V\)-categorical compact Hausdorff spaces which
help us to recognise \(\big(\Priests{\V}\big)^{\op}\) as some sort of algebraic
category.

Since we will use it frequently, below we recall an intrinsic characterisation
of cofiltered limits in \(\COMPHAUS\) which goes back to \citep{Bou42}. We refer
to this result commonly as the \emph{Bourbaki-criterion}.

\begin{theorem}
  Let $D \colon I \longrightarrow \COMPHAUS$ be a cofiltered diagram. Then a
  cone $(p_i \colon L \longrightarrow D(i))_{i\in I}$ for $D$ is a limit cone if
  and only if
  \begin{enumerate}
  \item $(p_i \colon L \longrightarrow D(i))_{i\in I}$ is point-separating, and
  \item for every $i\in I$,
    \begin{displaymath}
      \bigcap_{j{\to}i}\im D(j\to i)=\im p_i.
    \end{displaymath}
    That is, ``the image of each $p_i$ is as large as possible''.
  \end{enumerate}
\end{theorem}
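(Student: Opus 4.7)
My plan is to reduce the theorem to the standard construction of the limit and then separately verify each direction. Concretely, I would first construct the canonical limit $L'$ of $D$ as the closed subspace
\begin{equation*}
  L' = \Bigl\{(x_j)_{j\in I} \in \prod_{j\in I}D(j) \,\Bigm|\, D(f)(x_j)=x_{j'} \text{ for every } f \colon j \to j' \text{ in } I \Bigr\}
\end{equation*}
of the compact Hausdorff product, with canonical projections $\pi_i\colon L' \longrightarrow D(i)$. Any cone $(p_i)$ induces a unique continuous $\phi \colon L \longrightarrow L'$ with $\pi_i\cdot\phi=p_i$, and $(p_i)$ is a limit cone if and only if $\phi$ is a homeomorphism. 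Since $L$ is compact and $L'$ Hausdorff, this reduces to showing that $\phi$ is bijective, and injectivity of $\phi$ is equivalent to point-separation (i). So the entire theorem reduces to: given (i), surjectivity of $\phi$ is equivalent to (ii).

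For the ``only if'' direction, I would verify that $\pi_i(L')=\bigcap_{f\colon k \to i}\im D(f)$. The inclusion $\subseteq$ is immediate from the definition of $L'$. For the reverse inclusion, fix $x\in\bigcap_{f\colon k \to i}\im D(f)$ and, for each finite subcategory $J \subseteq I$ containing $i$ (closed under composition), set
\begin{equation*}
  F_J = \Bigl\{(y_j)_j \in \prod_{j}D(j) \,\Bigm|\, y_i=x,\ D(f)(y_j)=y_{j'} \text{ for all } f\colon j \to j' \text{ in } J \Bigr\}.
\end{equation*}
Each $F_J$ is closed in the compact product, and the collection $\{F_J\}_J$ has the finite intersection property (indeed, it is downward directed by reverse inclusion). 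By cofilteredness of $I$, for every such $J$ there is an object $k\in I$ admitting a compatible cone $(g_j\colon k \to j)_{j\in J}$ over the inclusion $J\hookrightarrow I$; using that $x\in\im D(g_i)$, pick $z\in D(k)$ with $D(g_i)(z)=x$ and extend $(D(g_j)(z))_{j\in J}$ arbitrarily to a point of $F_J$, showing $F_J\neq\varnothing$. By compactness the intersection $\bigcap_J F_J$ is nonempty, and any of its elements lies in $L'$ with $i$-th coordinate $x$.

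For the ``if'' direction, assume (i) and (ii); then $\phi$ is injective. Since $L$ is compact and $L'$ Hausdorff, $\phi(L)$ is closed in $L'$. Suppose $\ell'=(x_j)_j \in L'\setminus\phi(L)$; then some basic product-neighbourhood $U$ of $\ell'$, determined by finitely many indices $j_1,\ldots,j_n$, misses $\phi(L)$. By cofilteredness, choose $k\in I$ with arrows $h_s\colon k \to j_s$ ($s=1,\ldots,n$). For any $l\in I$ and $f\colon l \to k$, compatibility of $\ell'$ gives $x_k=D(f)(x_l)\in\im D(f)$; hence $x_k\in\bigcap_{l\to k}\im D(l\to k)=\im p_k$ by (ii). Pick $\ell\in L$ with $p_k(\ell)=x_k$; then $p_{j_s}(\ell)=D(h_s)(x_k)=x_{j_s}$ for each $s$, so $\phi(\ell)\in U$, contradicting $U\cap\phi(L)=\varnothing$. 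Thus $\phi$ is surjective. The main obstacle is the FIP/compactness argument in the ``only if'' direction, since it is there that the cofilteredness of $I$ is used essentially to coherently select compatible families, whereas the ``if'' direction uses only a single application of cofilteredness to collapse finitely many coordinates.
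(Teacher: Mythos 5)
The paper does not actually prove this theorem: it is stated as a known result (the ``Bourbaki-criterion'') with a reference to Bourbaki, so there is no in-text proof to compare against. Your argument is the standard self-contained one --- realise the limit canonically as the closed subspace $L'\subseteq\prod_{j}D(j)$ of compatible families, use that a continuous bijection of compact Hausdorff spaces is a homeomorphism to reduce everything to bijectivity of the comparison map $\phi$, match injectivity with condition (i), and match surjectivity with condition (ii) via the identity $\pi_i(L')=\bigcap_{f\colon k\to i}\im D(f)$ --- and it is correct in substance; both the finite-intersection-property argument for that identity and the converse direction (closedness of $\phi(L)$ plus a basic neighbourhood determined by finitely many coordinates, collapsed to a single index by cofilteredness) go through. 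One small wrinkle: you index the sets $F_J$ by finite subcategories $J$ \emph{closed under composition}. The composition-closure of a finite set of morphisms need not be finite (a single non-idempotent endomorphism already generates infinitely many composites), so such $J$ need not exhaust the morphisms of $I$, in which case $\bigcap_J F_J\subseteq L'$ is not guaranteed. The fix is immediate: index instead by arbitrary finite sets $S$ of morphisms of $I$, imposing $y_i=x$ and compatibility only along the morphisms of $S$. Cofilteredness still supplies a cone $(g_j)$ over the finite diagram spanned by $S$, the verification $D(f)(y_j)=y_{j'}$ for $f\in S$ uses only the cone identities $f\circ g_j=g_{j'}$ for those $f$, the family $\{F_S\}_S$ is still directed downwards, and now every morphism of $I$ lies in some $S$, so any point of $\bigcap_S F_S$ does lie in $L'$.
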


\begin{remark}
  The second condition above is automatically satisfied if
  \(p_{i}\colon X \longrightarrow D(i)\) is surjective.
\end{remark}

\begin{remark}
  The Bourbaki-criterion applies also to complete categories \(\catA\) with a
  limit preserving faithful functor
  \(\ftII{-} \colon \catA \longrightarrow \COMPHAUS \). In this case, the first
  condition above reads as
  \begin{quote}
    $(p_i \colon L \longrightarrow D(i))_{i\in I}$ is point-separating and
    initial with respect to the functor
    \(\ftII{-} \colon \catA \longrightarrow \COMPHAUS \).
  \end{quote}
\end{remark}

\begin{example}
  From the Bourbaki-criterion it follows at once that, for instance, every
  Priestley space \(X\) is a cofiltered limit of finite Priestley spaces. In
  fact, let \((p_{i}\colon X \longrightarrow X_{i})_{i\in I}\) be the canonical
  cone for the canonical diagram of \(X\) with respect to all finite
  spaces. Clearly, the cone \((p_{i}\colon X \longrightarrow X_{i})_{i\in I}\)
  is point-separating and initial since \(\two\) is finite. For every index
  \(i\), consider the image factorisation of \(p_{i}\).
  \begin{displaymath}
    \begin{tikzcd}
      & X
      \ar[d,"p_{i}"]\ar[dr,"",->>]\\
      \text{finite spaces:} & X_{i} & \im(p_{i}) \ar[l,"",>->]
    \end{tikzcd}
  \end{displaymath}
  Since \(\im(p_{i})\hookrightarrow X_{i}\) belongs to the diagram, the second
  condition is satisfied.

  We can deduce in a similar fashion the well-known facts that every Boolean
  space $X$ is a cofiltered limit of finite spaces, every compact Hausdorff
  space is a cofiltered limit of metrizable compact Hausdorff spaces, and so on.
\end{example}

\begin{remark}\label{d:rem:4}
  The classic \citeauthor{Sto38}/\citeauthor{Pri70} duality
  \(\PRIEST^{\op}\sim\DLAT\) implies in particular that $\PRIEST^{\op}$ is a
  finitary variety, a fact which can also be seen abstractly using
  Theorems~\ref{d:thm:6} and \ref{d:thm:9}. Below we explain the argument in
  some detail as it serves as a motivation for the investigation in the
  remainder of this section.
  \begin{enumerate}
  \item \(\PRIEST\) has all limits and colimits. This is well-known, but we
    stress that it is a special case of Corollary~\ref{d:cor:1}.
  \item\label{d:item:2} Every embedding in \(\PRIEST\) is a regular
    monomorphism; therefore the class of embeddings coincides with the class of
    regular monomorphisms. We use the argument of \citep[Lemma~4.8]{Hof02}: for
    an embedding \(m \colon X \longrightarrow Y\) in \(\PRIEST\), consider a
    presentation \((q_{i} \colon Y \longrightarrow Y_{i})_{i\in I}\) as a
    cofiltered limit of finite Priestley spaces (= finite partially ordered
    sets). For every \(i\in I\), take the (surjetive, embedding)-factorisation
    \begin{displaymath}
      X\xrightarrow{\quad p_{i}\quad} X_{i}\xrightarrow{\quad m_{i}\quad} Y_{i}
    \end{displaymath}
    of \(q_{i}\cdot m\). Then also
    \((p_{i} \colon X \longrightarrow X_{i})_{i\in I}\) is a limit cone (by the
    Bourbaki-criterion); moreover, \(m\) is the limit of the family
    \((m_{i})_{i\in I}\).
    \begin{equation}\label{d:eq:5}
      \begin{tikzcd}
        X \ar[r,"m",>->]\ar[d,"p_{i}"',->>] & Y
        \ar[d,"q_{i}"] \\
        X_{i} \ar[r,>->,"m_{i}"'] & Y_{i}
      \end{tikzcd}
    \end{equation}
    Having finite and hence discrete domain and codomain, each
    \(m_{i}\colon X_{i} \longrightarrow Y_{i}\) is a regular monomorphism in
    \(\POST_{\fin}=\PRIEST_{\fin}\) (this is a special case of
    Theorem~\ref{d:thm:8}) and therefore also in \(\PRIEST\). Consequently, also
    \(m=\lim_{i}m_{i}\) is a regular monomorphism in \(\PRIEST\).
  \item By definition and by the above, the two-element space is a regular
    cogenerator in \(\PRIEST\).
  \item The two-element space is finitely copresentable in \(\PRIEST\). This is
    very well-known; for our purpose we mention here that it is a consequence of
    \citep[Lemma~2.2]{Hof02a}. In this section we observe that this result
    generalises beyond the finitary case (see Lemma~\ref{d:lem:4}).
  \item\label{d:item:3} The two-element space is regular injective in \(\PRIEST\). This follows
    immediately from finite copresentability: Consider a regular monomorphism
    \(m \colon X \longrightarrow Y\) in \(\PRIEST\) together with
    \eqref{d:eq:5}, and let \(f \colon X \longrightarrow\two\) be a morphism in
    \(\PRIEST\). Since \(\two\) is finitely copresentable, there is some
    \(i_{0}\in I\) and a morphism
    \(\bar{f}\colon X_{i_{0}} \longrightarrow\two\) with
    \(\bar{f}\cdot p_{i_{0}}=f\). Since \(\two\) is injective in \(\POST\) (we
    stress that this is a special case of Proposition~\ref{d:prop:1}), there is
    some \(\bar{g}\colon X_{i_{0}} \longrightarrow Y_{i_{0}}\) with
    \(\bar{g}\cdot m_{i_{0}}=\bar{f}\). Hence, \(\bar{g}\cdot q_{i_{0}}\) is an
    extension of \(f\) along \(m\).
    \begin{displaymath}
      \begin{tikzcd}
        X \ar[r,"m",>->]\ar[d,"p_{i_{0}}"',->>]\ar[ddr,bend right=100,"f"'] & Y
        \ar[d,"q_{i_{0}}"] \\
        X_{i_{0}} \ar[r,>->,"m_{i_{0}}"']\ar[dr,dashed,"\bar{f}"']
        & Y_{i_{0}}\ar[d,dotted,"\bar{g}"]\\
        & \two
      \end{tikzcd}
    \end{displaymath}
  \item \(\PRIEST\) has effective equivalence corelations. A direct proof, even
    for partially ordered compact Hausdorff spaces in general, can be found in
    \citep{AR19_tmp}.
  \end{enumerate}
\end{remark}

Note that our treatment of properties of \(\PRIEST\) rests on results about
\(\ORD\) and \(\POST\), therefore we have first a look at \(\V\)-categories.

\begin{theorem}
  \(\Cats{\V}^{\op}\) is a quasivariety.
\end{theorem}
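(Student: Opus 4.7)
The plan is to invoke the dual form of Theorem~\ref{d:thm:6}: to show that $\Cats{\V}^{\op}$ is a quasivariety it suffices to exhibit a regular injective regular cogenerator in the complete category $\Cats{\V}$. Completeness is provided by Theorem~\ref{d:thm:8}, so the essential task is to identify a suitable cogenerator.

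The obvious candidate is $\V$ itself, which is regular injective by Proposition~\ref{d:prop:1} together with Theorem~\ref{d:thm:8} (recalling that regular monomorphisms in $\Cats{\V}$ are precisely the embeddings), and for which the cone $(f\colon X \longrightarrow \V)_{f}$ is initial. The difficulty is that $\V$ does not separate points of a non-separated $\V$-category: any distinct $x\ne y$ with $k\le a(x,y)$ and $k\le a(y,x)$ are sent to the same element by every $\V$-functor into $\V$. To remedy this I would pair $\V$ with the indiscrete $\V$-category $I=(\{0,1\},\top)$, whose hom sends every pair to the top element of $\V$, and take
\begin{equation*}
  C = \V\times I
\end{equation*}
as the cogenerator. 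Because $I$ is indiscrete, every function into $I$ is automatically a $\V$-functor, so any pair of distinct points of an arbitrary $\V$-category can be separated by a morphism into $C$.

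The verification then consists of two checks. Regular injectivity of $C$ follows from Proposition~\ref{d:prop:1} for the first factor, from the triviality of extending any function into an indiscrete structure for the second factor, and from the closure of (regular) injectives under products. For the cogenerator property, computing the product $\V$-categorical structure yields $a_{C}((u,i),(v,j))=\hom(u,v)$; consequently the initiality of the cone $(f\colon X\longrightarrow C)_{f}$ reduces, via the $\V$-component, to Proposition~\ref{d:prop:1}, while the $I$-component witnesses point-separation. Therefore the canonical morphism $X\longrightarrow C^{\Cats{\V}(X,C)}$ is both injective and initial with respect to $\Cats{\V}\longrightarrow\SET$, that is, an embedding, and hence a regular monomorphism by Theorem~\ref{d:thm:8}.

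The only conceptually non-routine step is recognising that the natural cogenerator $\V$ must be enlarged by the indiscrete structure on two points in order to handle non-separated $\V$-categories, and that this augmentation preserves both initiality and regular injectivity. Once $C$ is chosen, the quasivariety conclusion follows at once by the dual of Theorem~\ref{d:thm:6}.
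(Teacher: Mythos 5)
Your proof is correct and follows essentially the same route as the paper's: both exhibit a regular injective regular cogenerator of the form $\V\times(\text{indiscrete})$, using Proposition~\ref{d:prop:1} for initiality and injectivity of the $\V$-factor, the indiscrete factor for point-separation, and Theorem~\ref{d:thm:8} to identify embeddings with regular monomorphisms before invoking the dual of Theorem~\ref{d:thm:6}. The only (immaterial) difference is that the paper takes the indiscrete structure $\V_{I}$ on the underlying set of $\V$ where you take the two-element indiscrete $\V$-category.
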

\begin{proof}
  First recall from Theorem~\ref{d:thm:8} that the regular monomorphisms in
  \(\Cats{\V}\) are precisely the embeddings, and from
  Proposition~\ref{d:prop:1} that \(\V\) is injective and
  \((f \colon X \longrightarrow \V)_{f}\) is initial, for every \(\V\)-category
  \(X\). Moreover, \(\V_{I}\) (indiscrete structure) is a cogenerator in
  \(\Cats{\V}\) and therefore \(\V\times\V_{I}\) is a regular injective regular
  cogenerator. Since \(\Cats{\V}\) is also complete, the assertion follows.
\end{proof}

\begin{remark}
  The observation above should be compared to the fact that ``\(\TOP^{\op}\) is
  a quasivariety'', for details see \citep{BP95, BP96} and \citep{AP97, PW99}.
\end{remark}

On the other hand, the quasivariety \(\Cats{\V}^{\op}\) does not have any
rank. To see this, we recall first the following result from
\citep[Page~64]{GU71} (see also \citep{Ulm71a}).

\begin{proposition}
  A set is copresentable in \(\SET\) if and only if it is a singleton.
\end{proposition}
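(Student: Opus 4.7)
The claim splits into two implications, and I would attack each separately.

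For the easy direction, a singleton $1$ in $\SET$ satisfies $\hom_{\SET}(-,1)\cong 1$ (the constant functor at a singleton). Since a constant functor at a terminal object preserves all connected colimits -- and filtered colimits are connected and nonempty -- this functor preserves every filtered colimit. Hence $1$ is already $\aleph_0$-copresentable.

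The content is in the reverse direction: if $|X|\ne 1$, then $X$ is not $\lambda$-copresentable for any regular cardinal $\lambda$. My strategy is, for each regular $\lambda$, to exhibit a concrete $\lambda$-cofiltered diagram whose limit is empty but whose entries are all nonempty, and then use it to detect failure of the copresentability condition. Fix a set $I$ with $|I|\ge\lambda$. Let $\mathcal{J}$ be the poset of subsets $J\subseteq I$ with $|J|<\lambda$, ordered by inclusion; this is $\lambda$-filtered because $\lambda$ is regular (the union of fewer than $\lambda$ many $<\lambda$-sized sets is still $<\lambda$-sized). Define $D\colon\mathcal{J}^{\op}\longrightarrow\SET$ by $D(J)=I\setminus J$, with transition maps the obvious inclusions $I\setminus J_{2}\hookrightarrow I\setminus J_{1}$ when $J_{1}\subseteq J_{2}$. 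Since the transition maps are inclusions, $\lim D=\bigcap_{J}(I\setminus J)=\emptyset$ (each point of $I$ lies in some singleton $J$).

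With this diagram in hand, I split cases. If $X=\emptyset$, then $\hom(D(J),\emptyset)=\emptyset$ for every $J$ (as $D(J)$ is nonempty), so the filtered colimit of $\hom(D(J),X)$ is empty; on the other hand, $\hom(\lim D,\emptyset)=\hom(\emptyset,\emptyset)$ is a singleton, so the canonical comparison fails. If $|X|\ge 2$, pick distinct $x_{0},x_{1}\in X$; the two constant functions with these values represent elements of $\hom(D(J),X)$ which restrict to distinct constant functions on every $D(J')$ with $J'\supseteq J$. Hence they stay apart in the filtered colimit, which therefore has at least two elements, whereas $\hom(\lim D,X)=\hom(\emptyset,X)$ is a singleton.

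The only step that requires a moment of care is checking that $\mathcal{J}^{\op}$ indeed indexes a $\lambda$-cofiltered diagram in the correct sense and that transitions are the right way; with the inclusion-based definition, this reduces to the regularity of $\lambda$, which I consider the main (minor) technicality. Everything else is then a direct cardinality comparison and the conclusion follows since the construction works for arbitrary regular $\lambda$, ruling out $\lambda$-copresentability for all ranks.
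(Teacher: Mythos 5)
Your proof is correct. Note that the paper does not actually prove this proposition --- it simply cites \citep[Page~64]{GU71} --- so there is no in-text argument to compare against; what you supply is the standard self-contained proof. Both directions check out: for a singleton, $\hom(-,1)$ is constant at a terminal object and filtered categories are nonempty and connected, so the comparison map is a bijection between singletons; for the converse, your codirected system $D(J)=I\setminus J$ over the $\lambda$-directed poset of subsets $J\subseteq I$ of size $<\lambda$ (with $|I|\ge\lambda$) is genuinely $\lambda$-codirected by regularity of $\lambda$, has empty limit but all entries nonempty, and the two cases ($X=\emptyset$ gives an empty colimit mapping to a singleton; $|X|\ge 2$ gives a colimit with at least two elements, since distinct constant functions never agree on any nonempty $I\setminus J'$, mapping to a singleton) each kill $\lambda$-copresentability. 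Since $\lambda$ was an arbitrary regular cardinal, $X$ is not copresentable of any rank. The one point worth making explicit in a final write-up is the description of equality in a filtered colimit of sets (two elements are identified iff they agree after some common transition map), which is exactly what your ``they stay apart'' step uses; with that spelled out, the argument is complete.
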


The corresponding result for \(\Cats{\V}\) is now an immediate consequence of
the following observation.

\begin{proposition}
  \label{d:prop:5}
  The ``discrete'' functor \(\ftD \colon\SET \longrightarrow\Cats{\V}\)
  preserves non-empty limits, in particular cofiltered limits. If \(k=\top\) is
  the top-element of \(\V\), then \(\ftD\) preserves also the terminal object.
\end{proposition}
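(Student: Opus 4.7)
The plan is to compute both sides and compare. Recall that $\ftD X$ is the $\V$-category with underlying set $X$ whose structure $a_X$ is given by $a_X(x,x')=k$ if $x=x'$ and $a_X(x,x')=\bot$ otherwise; this is indeed a $\V$-category because $\bot\otimes-$ is constantly $\bot$ (the tensor being a left adjoint preserves the empty supremum).

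First, I would fix a non-empty small diagram $D\colon J \longrightarrow \SET$ and compute the limit of $\ftD\cdot D$ in $\Cats{\V}$. Since by Theorem~\ref{d:thm:8} the forgetful functor $\Cats{\V}\longrightarrow\SET$ is topological, it creates limits: the underlying set of $\lim(\ftD\cdot D)$ is $L:=\lim D$ (the usual set-theoretic limit), and the $\V$-categorical structure is the initial one with respect to the canonical cone $(p_j\colon L \longrightarrow D(j))_{j\in J}$. Again by Theorem~\ref{d:thm:8}, this initial structure is computed pointwise as
\begin{displaymath}
  a\bigl((x_j)_j,(y_j)_j\bigr)=\bigwedge_{j\in J}a_{D(j)}\bigl(x_j,y_j\bigr),
\end{displaymath}
where $a_{D(j)}$ denotes the discrete structure on $D(j)$.

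Second, I would compare $a$ with the discrete structure on $L$. If $(x_j)_j=(y_j)_j$, then $a_{D(j)}(x_j,y_j)=k$ for every $j$, so the infimum equals $k$. If $(x_j)_j\neq(y_j)_j$, then there is some $j_0\in J$ with $x_{j_0}\neq y_{j_0}$, hence $a_{D(j_0)}(x_{j_0},y_{j_0})=\bot$, and the infimum equals $\bot$. This second case is precisely where non-emptiness of $J$ is used; without at least one index, one cannot force the infimum down to $\bot$. This shows $a$ coincides with the discrete structure on $L$, so $\ftD L\cong \lim(\ftD\cdot D)$ canonically.

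Third, for the terminal object ($J=\varnothing$), the limit of $\ftD\cdot D$ in $\Cats{\V}$ is the terminal $\V$-category, i.e.\ a singleton $\{*\}$ with $a(*,*)=\top$ (the empty initial infimum), whereas $\ftD(\{*\})$ has $a(*,*)=k$. The two agree exactly when $k=\top$, which yields the second assertion. There is no real obstacle here: the only subtle point is the role of non-emptiness of $J$ in collapsing the infimum to $\bot$, which is what isolates the terminal case.
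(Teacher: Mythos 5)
Your computation is correct, and it is precisely the routine verification that the paper leaves implicit (no proof is given for this proposition): since \(\Cats{\V}\longrightarrow\SET\) is topological, the limit carries the initial structure \(\bigwedge_{j}a_{D(j)}(x_j,y_j)\), and this agrees with the discrete structure on \(\lim D\) exactly when the indexing category is non-empty. One small slip in your commentary: non-emptiness is actually needed in the \emph{diagonal} case, to get \(\bigwedge_{j}k=k\) rather than the empty meet \(\top\); the off-diagonal case is simply vacuous for the empty diagram, since a singleton has no distinct pairs. Your third paragraph identifies the correct obstruction (\(\top\) versus \(k\)), so the argument as a whole is unaffected.
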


\begin{corollary}
  If \(X\) is copresentable in \(\Cats{\V}\), then \(\ftII{X}=1\).
\end{corollary}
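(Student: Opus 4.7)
The plan is to transfer the copresentability of $X$ from $\Cats{\V}$ to a copresentability statement for its underlying set $\ftII{X}$ in $\SET$, and then invoke the proposition just cited. The key input is Proposition~\ref{d:prop:5}: even though $\ftD \colon \SET \longrightarrow \Cats{\V}$ is a left adjoint, it still preserves non-empty, and in particular cofiltered, limits.

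Concretely, I would fix a regular cardinal $\lambda$ such that $X$ is $\lambda$-copresentable in $\Cats{\V}$; by definition, the contravariant hom-functor $\Cats{\V}(-,X)$ sends $\lambda$-cofiltered limits in $\Cats{\V}$ to $\lambda$-filtered colimits in $\SET$. Given an arbitrary $\lambda$-cofiltered diagram $D\colon J \longrightarrow \SET$ with limit cone $(p_{j}\colon L \longrightarrow D(j))_{j\in J}$, the non-emptiness of $J$ together with Proposition~\ref{d:prop:5} ensure that $(\ftD p_{j}\colon \ftD L \longrightarrow \ftD D(j))_{j\in J}$ is still a $\lambda$-cofiltered limit cone, now in $\Cats{\V}$. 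Combining this with the adjunction $\ftD \dashv \ftII{-}$ from Theorem~\ref{d:thm:8} and the assumed copresentability of $X$ produces a natural chain of bijections
\begin{displaymath}
  \SET(L,\ftII{X}) \cong \Cats{\V}(\ftD L, X) \cong \operatorname{colim}_{j}\Cats{\V}(\ftD D(j), X) \cong \operatorname{colim}_{j}\SET(D(j),\ftII{X}),
\end{displaymath}
which is precisely the statement that $\ftII{X}$ is $\lambda$-copresentable in $\SET$. The preceding proposition then forces $\ftII{X}$ to be a singleton, i.e.\ $\ftII{X}=1$.

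I do not anticipate any real obstacle here: the argument is a formal transfer along $\ftD \dashv \ftII{-}$, with the only non-routine ingredient being the extra preservation property of $\ftD$ recorded in Proposition~\ref{d:prop:5}. The point worth flagging for the reader is precisely this asymmetry --- that the \emph{left} adjoint $\ftD$ also happens to preserve the relevant limits --- since it is this which allows the limit $L$ in $\SET$ to reappear as the limit $\ftD L$ on the $\Cats{\V}$ side and thus lets $\lambda$-copresentability cross the adjunction.
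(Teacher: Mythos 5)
Your argument is correct and is essentially the paper's own proof, just written out in full: the paper's one-line justification ``$\hom(-,\ftII{X})\simeq\hom(\ftD-,X)$ together with Proposition~\ref{d:prop:5}'' is exactly your chain of bijections transferring $\lambda$-copresentability along $\ftD\dashv\ftII{-}$, followed by the cited proposition that copresentable sets are singletons. Nothing to add.
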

\begin{proof}
  By Proposition~\ref{d:prop:5}, the forgetful functor
  \(\ftII{-}\colon \Cats{\V}\longrightarrow\SET\) preserves copresentable
  objects since, for every \(\V\)-category \(X\),
  \(\hom(-,\ftII{X})\simeq\hom(D-,X)\).
\end{proof}

We turn now our attention to separated \(\V\)-categories (see \citep{HT10}, for
instance).

\begin{theorem}
  The full subcategory \(\Cats{\V}_{\sep}\) of \(\Cats{\V}\) is closed under
  initial monocones. Therefore the inclusion functor
  \(\Cats{\V}_{\sep} \longrightarrow\Cats{\V}\) has a left adjoint; moreover,
  the canonical forgetful functor \(\Cats{\V}_{\sep} \longrightarrow\SET\) is
  mono-topological with left adjoint
  \(\ftD \colon\SET \longrightarrow\Cats{\V}_{\sep}\) (discrete
  structures). Consequently, \(\Cats{\V}_{\sep}\) is complete and cocomplete,
  with concrete limits. A morphism $f \colon X \longrightarrow Y$ in
  $\Cats{\V}_\sep$ is a monomorphism if and and only if the map \(f\) is
  injective.
\end{theorem}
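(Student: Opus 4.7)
The plan is to make closure under initial monocones the central technical step, from which every remaining assertion follows formally. I would start by showing this directly: let $(f_i\colon X \to X_i)_{i\in I}$ be an initial monocone in $\Cats{\V}$ with each $X_i$ separated. By the characterisation of initial cones in Theorem~\ref{d:thm:8}, if $k \le a(x,y)$ and $k \le a(y,x)$, then for each $i \in I$ one has $k \le a_i(f_i(x), f_i(y))$ and $k \le a_i(f_i(y), f_i(x))$, so separatedness of $X_i$ forces $f_i(x) = f_i(y)$; the point-separating property of the cone then gives $x = y$.

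For the left adjoint to the inclusion $\Cats{\V}_\sep \hookrightarrow \Cats{\V}$, I would construct a quotient: on $X=(X,a)$ put $x \sim y$ iff $k \le a(x,y)$ and $k \le a(y,x)$. Reflexivity comes from $k \le a(x,x)$, symmetry is immediate, and transitivity uses the inequality $a(x,y) \otimes a(y,z) \le a(x,z)$ together with $k \otimes k = k$. The key compatibility check is that $\sim$ is a congruence for $a$: for $x \sim x'$, one has $a(x',y) \ge a(x',x) \otimes a(x,y) \ge k \otimes a(x,y) = a(x,y)$ and symmetrically, which together with the analogous argument in the second variable shows $a$ descends to a separated $\V$-category structure on $X/{\sim}$; the surjective $\V$-functor $X \to X/{\sim}$ is the reflection unit. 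Mono-topologicity of $\Cats{\V}_\sep \to \SET$ now follows from topologicity of $\Cats{\V} \to \SET$ combined with the closure property: any point-separating structured cone in $\SET$ with separated targets lifts to an initial monocone in $\Cats{\V}$, which already lies in $\Cats{\V}_\sep$. The left adjoint $\ftD$ is simply the usual discrete structure $a(x,x)=k$ and $a(x,y)=\bot$ for $x \ne y$, which is separated (in the non-trivial case, and trivially otherwise).

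The remaining statements are formal consequences. Completeness with concrete limits follows because initial cones in $\Cats{\V}$ are the limit cones (Theorem~\ref{d:thm:8}) and $\Cats{\V}_\sep$ is closed under them, while cocompleteness follows from reflectivity in the cocomplete $\Cats{\V}$. The monomorphism characterisation is the standard argument: faithfulness of the forgetful functor makes every injective $\V$-functor monic, and conversely, if $f(x_1) = f(x_2)$ with $x_1 \ne x_2$, the two $\V$-functors $\ftD\{*\} \rightrightarrows X$ selecting $x_1$ and $x_2$ are equalised by $f$ without being equal. The only mildly delicate step in the whole argument is verifying that $\sim$ is a congruence and that the induced structure on the quotient genuinely satisfies the $\V$-category axioms, but as indicated above this calculation is routine.
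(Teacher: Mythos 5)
Your proof is correct and takes the standard route; the paper states this theorem without proof (deferring to \citep{HT10}), and your argument --- closure under initial monocones read off from the initiality formula of Theorem~\ref{d:thm:8}, the explicit separated reflection $X\to X/{\sim}$, and the formal derivation of mono-topologicity, (co)completeness and the monomorphism characterisation --- is exactly the expected one; note that your quotient is precisely the (surjective, initial monocone)-factorisation of the cone of all $\V$-functors from $X$ into separated $\V$-categories, the same mechanism the paper uses later for the reflection onto $\Priests{\V}$. The one imprecision is the parenthetical claim that the discrete structure is separated ``trivially'' in the degenerate case $k=\bot$: there $\V$ is the one-element quantale, a discrete $\V$-category with more than one point is indiscrete and hence \emph{not} separated, and $\ftD$ must be taken as discrete-followed-by-reflection; this has no bearing on any case of interest.
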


\begin{remark}
  We do not know if \(\TOP_{0}^{\op}\) or \(\Cats{\V}_{\sep}^{\op}\) are
  quasivarieties. Note that in both cases the class of regular monomorphisms
  does not coincide with the class of embeddings, as we also explain below (see
  also \citep{Bar68}).
\end{remark}

The description of further classes of morphisms in $\Cats{\V}_\sep$ is
facilitated by the notion of \emph{L-closure} introduced in \citep{HT10}.

\begin{lemma}
  \label{d:lem:2}
  Let $X$ be a $\V$-category, $M\subseteq X$ and $x\in X$. Then the following
  assertions are equivalent.
  \begin{tfae}
  \item $x\in\overline{M}$.
  \item For all $f,g \colon X \longrightarrow Y$ in $\Cats{\V}$, if $f|_M=g|_M$,
    then $f(x)\simeq g(x)$.
  \item For all $f,g \colon X \longrightarrow Y$ in $\Cats{\V}$ with \(Y\)
    separated, if $f|_M=g|_M$, then $f(x)=g(x)$.
  \item For all $f,g \colon X \longrightarrow \V$ in $\Cats{\V}$, if
    $f|_M=g|_M$, then $f(x)=g(x)$.
  \end{tfae}
\end{lemma}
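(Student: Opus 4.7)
The plan is to close the cycle (i) $\Rightarrow$ (ii) $\Rightarrow$ (iii) $\Rightarrow$ (iv) $\Rightarrow$ (i), with nearly all the weight sitting on the last arrow. I will use throughout the Hofmann--Tholen description
\begin{displaymath}
  x\in\overline{M}\quad\Longleftrightarrow\quad k\le\bigvee_{y\in M}a(x,y)\otimes a(y,x).
\end{displaymath}

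For (i) $\Rightarrow$ (ii), given $\V$-functors $f,g\colon X\longrightarrow Y$ with $f|_M=g|_M$, I plan to use $\V$-functoriality of $f$ and $g$ together with $f(y)=g(y)$ to get, for each $y\in M$,
\begin{displaymath}
  a(x,y)\otimes a(y,x)\le b(f(x),g(y))\otimes b(g(y),g(x))\le b(f(x),g(x)),
\end{displaymath}
taking suprema over $M$ and invoking (i) to obtain $k\le b(f(x),g(x))$; the symmetric estimate gives $k\le b(g(x),f(x))$, hence $f(x)\simeq g(x)$. The implication (ii) $\Rightarrow$ (iii) is immediate since $\simeq$ coincides with $=$ in a separated $\V$-category, and (iii) $\Rightarrow$ (iv) follows because $\V$ itself is separated ($k\le\hom(u,v)$ is equivalent to $u\le v$).

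The substantive step is (iv) $\Rightarrow$ (i). My strategy is to exhibit a single pair of $\V$-functors $X\longrightarrow\V$ agreeing on $M$ whose values at $x$ between them encode the L-closure condition. The natural candidates are
\begin{displaymath}
  f=a(x,-)\qquad\text{and}\qquad g(z)=\bigvee_{y\in M}a(x,y)\otimes a(y,z),
\end{displaymath}
the second being the left Kan extension of $f|_M$ along $M\hookrightarrow X$. Transitivity of $a$ makes both maps $\V$-functors and forces $g(y_0)\le a(x,y_0)=f(y_0)$ for $y_0\in M$; the reverse inequality comes by retaining the single summand $y=y_0$ and using $k\le a(y_0,y_0)$. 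Thus $f|_M=g|_M$, and (iv) yields $a(x,x)=f(x)=g(x)=\bigvee_{y\in M}a(x,y)\otimes a(y,x)$, so that $k\le a(x,x)$ delivers $x\in\overline{M}$.

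The main obstacle is conceptual rather than computational: one has to guess the correct companion $g$ for the representable $a(x,-)$. Once the Kan-extension form above is chosen, every step reduces to repeated application of transitivity $a(u,v)\otimes a(v,w)\le a(u,w)$ and the unit law $k\le a(u,u)$, with no further ingredients beyond the definition of the L-closure.
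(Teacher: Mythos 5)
Your proof is correct and complete: the cycle (i)$\Rightarrow$(ii)$\Rightarrow$(iii)$\Rightarrow$(iv)$\Rightarrow$(i) is sound, and the key step (iv)$\Rightarrow$(i) via the pair $f=a(x,-)$ and its Kan-extension companion $g(z)=\bigvee_{y\in M}a(x,y)\otimes a(y,z)$ is exactly the standard argument. The paper itself states the lemma without proof, deferring to the cited work of Hofmann and Tholen, where essentially this same construction is used, so nothing further is needed.
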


\begin{corollary}\label{d:cor:4}
  The epimorphisms in $\Cats{\V}_\sep$ are precisely the L-dense $\V$-functors,
  and the regular monomorphisms the closed embeddings.
\end{corollary}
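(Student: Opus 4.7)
The plan is to deduce both assertions of the corollary from Lemma~\ref{d:lem:2} together with the observation that equalizers in $\Cats{\V}_\sep$ are computed as in $\Cats{\V}$: as the set-theoretic equalizer equipped with the initial $\V$-category structure inherited from the domain.

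The epimorphism claim and the forward direction of the regular mono claim both fall out by unfolding definitions. A $\V$-functor $f\colon X\longrightarrow Y$ in $\Cats{\V}_\sep$ is epi iff, for every separated $Z$ and parallel pair $g,h\colon Y\longrightarrow Z$, the identity $g\cdot f=h\cdot f$---equivalently, $g|_{f(X)}=h|_{f(X)}$---forces $g=h$; by (iii) of Lemma~\ref{d:lem:2} (taking $M=f(X)$), this says exactly that every $y\in Y$ lies in $\overline{f(X)}$, i.e.\ $f$ is L-dense. Similarly, if $m\colon X\longrightarrow Y$ equalizes some $g,h\colon Y\longrightarrow Z$ in $\Cats{\V}_\sep$, then $Z$ is separated and up to isomorphism $m$ is the inclusion of $\{y\in Y\mid g(y)=h(y)\}$ endowed with the initial structure, so $m$ is an embedding; applying (iii) to the pair $(g,h)$ then gives $\overline{m(X)}\subseteq m(X)$, so the image is L-closed.

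The substantive part is the converse of the regular mono claim. Given a closed embedding $m\colon X\hookrightarrow Y$, the idea is to realize $m$ as a single equalizer by bundling all $\V$-valued tests that agree on the image. Let
\begin{equation*}
  I = \{(\varphi,\psi) \mid \varphi,\psi\in\Cats{\V}(Y,\V),\; \varphi|_{m(X)}=\psi|_{m(X)}\},
\end{equation*}
which is a bona fide set, and form $F,G\colon Y\longrightarrow\V^I$ by pairing components. They agree on $m(X)$ by construction, and Lemma~\ref{d:lem:2}(iv) guarantees that for every $y\in Y\setminus m(X)=Y\setminus\overline{m(X)}$ some coordinate separates $F(y)$ from $G(y)$. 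Since $\V$ is separated and products of separated $\V$-categories are separated, $F$ and $G$ live in $\Cats{\V}_\sep$, and their equalizer---the set-theoretic equalizer with initial structure---is exactly $m(X)$ with its subspace structure, i.e.\ $m$ itself (the initial structure on the image coincides with the structure of $X$ because $m$ was assumed to be an embedding). The only mild obstacle is this final verification: confirming that the pairing stays within $\Cats{\V}_\sep$ and that the subspace structure reproduces the embedding $m$.
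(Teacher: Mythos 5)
Your argument is correct and is precisely the intended one: the paper's proof consists of the remark that both claims ``follow immediately from Lemma~\ref{d:lem:2}'' (citing \citep[Theorem~3.8]{HT10} for the epimorphism part), and your write-up simply fills in the standard details -- in particular the bundling of all test pairs $(\varphi,\psi)\colon Y\longrightarrow\V$ into a single parallel pair $F,G\colon Y\longrightarrow\V^{I}$ to exhibit a closed embedding as an equalizer. No gaps; the auxiliary facts you invoke (concreteness of equalizers in $\Cats{\V}_{\sep}$, separatedness of $\V^{I}$) are covered by the paper's earlier results on $\Cats{\V}_{\sep}$.
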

\begin{proof}
  The assertion regarding epimorphisms is in \citep[Theorem~3.8]{HT10}. However,
  both claims follow immediately from Lemma~\ref{d:lem:2}.
\end{proof}

We denote by \(\Cats{\V}_{\sep,\cc}\) the full subcategory of
\(\Cats{\V}_{\sep}\) formed by all Cauchy complete separated
\(\V\)-categories. The following two results follow immediately from
Corollary~\ref{d:cor:4}.

\begin{corollary}
  A separated \(\V\)-category \(X\) is Cauchy-complete if and only if \(X\) is a
  regular subobject of a power of \(\V\) in \(\Cats{\V}_{\sep}\). Moreover, the
  regular monomorphisms in \(\Cats{\V}_{\sep,\cc}\) are precisely the embeddings
  of \(\V\)-categories.
\end{corollary}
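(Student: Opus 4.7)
My plan is to bootstrap on Corollary~\ref{d:cor:4}, which identifies the regular monomorphisms of \(\Cats{\V}_{\sep}\) with the L-closed embeddings, together with two standard bridges between L-closure and Cauchy-completeness: (a) \(\V\) is Cauchy complete and products of Cauchy-complete separated \(\V\)-categories are Cauchy complete, so every power \(\V^{I}\) is Cauchy complete; and (b) for an embedding \(X\hookrightarrow Z\) in \(\Cats{\V}_{\sep}\) with \(Z\) Cauchy complete, \(X\) is Cauchy complete if and only if \(X\) is L-closed in \(Z\). With these in hand, both halves of the corollary reduce to short comparison arguments.

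\textbf{First statement.} The ``if'' direction is immediate: a regular subobject of some \(\V^{I}\) in \(\Cats{\V}_{\sep}\) is, by Corollary~\ref{d:cor:4} and (a), an L-closed sub-\(\V\)-category of the Cauchy-complete \(\V^{I}\), hence Cauchy complete by (b). For the ``only if'' direction, I start from Proposition~\ref{d:prop:1}, which supplies the initial cone \((f \colon X \longrightarrow\V)_{f}\). Separatedness of \(X\) together with the infimum formula of Theorem~\ref{d:thm:8} forces this cone to be point-separating: for distinct \(x,y\in X\) one has, say, \(a(x,y)\not\geq k\), so some \(f\) must satisfy \(\hom(f(x),f(y))\not\geq k\) and in particular \(f(x)\neq f(y)\). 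Hence the induced \(e \colon X \longrightarrow\V^{\hom(X,\V)}\) is an embedding, and (b) makes it L-closed, i.e.\ a regular monomorphism by Corollary~\ref{d:cor:4}.

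\textbf{Second statement.} Every regular monomorphism is an equalizer and therefore an initial injection, which gives one inclusion. For the converse, let \(m \colon X \longrightarrow Y\) be an embedding of \(\V\)-categories with \(X,Y\) Cauchy complete. By the first statement, there is a regular monomorphism \(n \colon Y\hookrightarrow\V^{J}\); the composite \(nm\) is an embedding of the Cauchy-complete \(X\) into \(\V^{J}\), so by (b) it is L-closed in \(\V^{J}\). Since L-closure is preserved by embeddings (a direct consequence of the characterisation in Lemma~\ref{d:lem:2} applied to \(n\)), the image \(m(X)\) is L-closed in \(Y\), whence \(m\) is a regular monomorphism in \(\Cats{\V}_{\sep}\). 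Reflectivity of \(\Cats{\V}_{\sep,\cc}\) in \(\Cats{\V}_{\sep}\) via the Cauchy-completion then shows that equalizers are inherited, so \(m\) remains a regular monomorphism in the subcategory.

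\textbf{Main obstacle.} The technical heart is (b), which translates between the topological notion of L-closure (Lemma~\ref{d:lem:2}) and the limit-theoretic notion of Cauchy-completeness via left adjoint \(\V\)-distributors. Concretely, a witness \(y\in Z\setminus X\) for \(y\in\overline{X}\) should yield a left adjoint distributor from \(1\) to \(X\) via \(x\mapsto a(y,x)\) which, by Cauchy-completeness of \(X\), is representable; the resulting \(x_{0}\in X\) combined with the L-closure condition and separatedness of \(Z\) forces \(y=x_{0}\), a contradiction. The converse direction proceeds dually: a left adjoint distributor into an L-closed \(X\) lifts along the embedding to a left adjoint distributor into the Cauchy-complete \(Z\), is represented there by some \(z_{0}\), and L-closedness pins \(z_{0}\) down to \(X\). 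Both arguments are standard in enriched category theory but do the essential work.
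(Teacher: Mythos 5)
Your proposal is correct and follows the same route the paper intends: the paper derives this corollary directly from Corollary~\ref{d:cor:4} (regular monomorphisms in \(\Cats{\V}_{\sep}\) are the L-closed embeddings) together with the standard dictionary from \citep{HT10} between L-closedness and Cauchy-completeness, which is exactly your bridges (a) and (b). Your elaboration of the two directions, including the point-separation of the cone \((f\colon X\to\V)_f\) via separatedness and the inheritance of equalizers by the reflective subcategory \(\Cats{\V}_{\sep,\cc}\), fills in precisely the details the paper leaves implicit.
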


\begin{corollary}
  The \(\V\)-category \(\V\) is a regular injective regular cogenerator in
  \(\Cats{\V}_{\sep,\cc}\). Hence, \(\big(\Cats{\V}_{\sep,\cc}\big)^{\op}\) is a
  quasivariety.
\end{corollary}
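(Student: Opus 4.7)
The strategy is to apply Theorem~\ref{d:thm:6} to $\big(\Cats{\V}_{\sep,\cc}\big)^{\op}$, so I need to exhibit $\V$ as a regular injective regular cogenerator in $\Cats{\V}_{\sep,\cc}$ and, additionally, verify that $\Cats{\V}_{\sep,\cc}$ is complete (so that the dual is cocomplete). Once this is done, the same data witness $\V$ as a regular projective regular generator in $\big(\Cats{\V}_{\sep,\cc}\big)^{\op}$, and Theorem~\ref{d:thm:6} delivers the quasivariety conclusion.

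Both ``regular'' properties will come from the preceding corollary, which identifies the objects of $\Cats{\V}_{\sep,\cc}$ as exactly the regular subobjects of powers of $\V$ in $\Cats{\V}_{\sep}$ and identifies the regular monomorphisms of $\Cats{\V}_{\sep,\cc}$ with the embeddings of $\V$-categories. Combined with the initiality of the cone $(f\colon X\longrightarrow \V)_{f}$ provided by Proposition~\ref{d:prop:1}, this will show that the canonical morphism $X \longrightarrow \V^{\Cats{\V}_{\sep,\cc}(X,\V)}$ is an embedding of $\V$-categories, hence a regular monomorphism in $\Cats{\V}_{\sep,\cc}$; so $\V$ is a regular cogenerator. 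For regular injectivity, I will take any regular monomorphism $m \colon X \longrightarrow Y$ in $\Cats{\V}_{\sep,\cc}$, recall that $m$ is then an embedding in $\Cats{\V}$, and invoke the injectivity statement of Proposition~\ref{d:prop:1} to extend any morphism $X \longrightarrow \V$ along $m$ in $\Cats{\V}$; the extension automatically lies in $\Cats{\V}_{\sep,\cc}$ since $\V$ does.

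For the completeness of $\Cats{\V}_{\sep,\cc}$, I will use that $\Cats{\V}_{\sep}$ is complete with concrete limits and observe that any such limit of Cauchy-complete objects embeds as a regular subobject into a product of powers of $\V$, hence belongs to $\Cats{\V}_{\sep,\cc}$ again by the preceding corollary. The main point requiring care is the interplay between the various notions of monomorphism: by Corollary~\ref{d:cor:4}, regular monomorphisms in $\Cats{\V}_{\sep}$ are only the \emph{closed} embeddings and therefore differ from general embeddings of $\V$-categories; it is precisely inside $\Cats{\V}_{\sep,\cc}$ that the two notions coincide, and this coincidence is what lets the injectivity of $\V$ in $\Cats{\V}$ upgrade to regular injectivity in $\Cats{\V}_{\sep,\cc}$. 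Keeping this bookkeeping straight is the only subtle step; once it is in place, the conclusion is immediate from Theorem~\ref{d:thm:6}.
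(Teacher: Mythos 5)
Your argument is correct and is essentially the paper's own (the paper merely states that the corollary ``follows immediately'' from the preceding two results): you use the same ingredients, namely Theorem~\ref{d:thm:6}, the injectivity and initiality statements of Proposition~\ref{d:prop:1}, and the identification of objects of \(\Cats{\V}_{\sep,\cc}\) with regular subobjects of powers of \(\V\) together with the fact that regular monomorphisms in \(\Cats{\V}_{\sep,\cc}\) are exactly the embeddings. Your explicit attention to the mismatch between regular monomorphisms (closed embeddings) in \(\Cats{\V}_{\sep}\) and embeddings, which disappears only upon restricting to Cauchy-complete objects, is precisely the point the paper's terse proof relies on.
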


\begin{remark}
  Clearly, the ``discrete'' functor
  \(\ftD \colon\SET \longrightarrow\Cats{\V}_{\sep}\) preserves non-empty
  limits.  Under some conditions (see \citep[Proposition~2.2]{CH09}), every
  discrete \(\V\)-category is Cauchy-complete and the discrete functor
  \(\ftD \colon\SET \longrightarrow\Cats{\V}_{\sep,\cc}\) is left adjoint to the
  forgetful functor \(\Cats{\V}_{\sep,\cc}\longrightarrow\SET\) and preserves
  codirected limits. Hence, in this case at most a one-element \(\V\)-category
  can be copresentable in \(\Cats{\V}_{\sep,\cc}\).
\end{remark}

\begin{remark}
  In general, the category \((\Cats{\V}_{\sep,\cc})^{\op}\) is not a variety,
  \emph{i.e.} does not have effective equivalence corelations. A counterexample
  is already given by the case \(\V=\two\) since the dual of
  \(\POST\sim\Cats{\two}_{\sep,\cc}\) is not a variety. This fact is well-known
  and follows immediately from the following facts:
  \begin{itemize}
  \item \(\POST^{\op}\) is equivalent to the category \(\TAL\) of totally
    algebraic lattices and maps preserving all suprema and all infima (see
    \citep{RW94}, for instance),
  \item \(\TAL\) is a full subcategory of the category \(\CCD\) of
    (constructively) completely distributive lattices and maps preserving all
    suprema and all infima,
  \item the unit interval \([0,1]\) is completely distributive but not totally
    algebraic,
  \item the category \(\CCD\) is monadic over \(\SET\) (see \citep{PW99}, and
    \citep{PZ15} for a generalisation to quantaloid-enriched categories). Here
    the free algebra over a set \(X\) is given by the complete lattice of upsets
    of the powerset of \(X\), and this lattice is totally algebraic and
    therefore also the free totally algebraic lattice over \(X\).
  \end{itemize}
\end{remark}

Another important property of \(\V\)-categories and \(\V\)-functors is
established in \citep{KL01}: $\Cats{\V}$ is locally presentable, for every
quantale $\V$. Under Assumption~\ref{d:ass:2} below, and based on
\citep{Sea05,Sea09}, we show that \(\Cats{\V}\) is locally
\(\aleph_{1}\)-copresentable by describing a corresponding countable limit
sketch. This will help us later to identify $\CatCHs{\V}$ as the model category
of a $\aleph_{1}$-ary limit sketch in $\COMPHAUS$. To do so, \emph{in the
  remainder of this section we impose the following conditions on the quantale
  \(\V\)}.

\begin{assumption}\label{d:ass:2}
  We assume that the underlying lattice of \(\V\) is completely distributive,
  and that there is a countable subset \(D\subseteq \V\) so that, for all
  \(v\in\V\),
  \begin{equation*}
    v=\bigvee\{u\in D\mid u\lll v\}.
  \end{equation*}
\end{assumption}

\begin{examples}
  The quantales of Examples~\ref{d:exs:1} and Example~\ref{d:ex:2} satisfy
  Assumption~\ref{d:ass:2}.
\end{examples}

\begin{remark}\label{d:rem:3}
  Under Assumption~\ref{d:ass:2}, for each \(v\in\V\),
  \begin{displaymath}
    \upc v=\bigcap\{\upc u\mid u\in D, u\lll v\}.
  \end{displaymath}
  Hence, by Remark~\ref{d:rem:2}~(\ref{d:item:1}), the sets \(\upc u\) (\(u\in
  D\)) form a subbasis for the closed sets of the dual of the Scott topology of
  \(\V\).
\end{remark}

We start with the following well-known fact.
\begin{lemma}\label{d:lem:1}
  The assignments
  \begin{equation*}
    (\varphi \colon X\to \V) \quad \longmapsto \quad (\varphi^{-1}(\upc u)_{u\in D})
  \end{equation*}
  and
  \begin{equation*}
    (B_u)_{u\in D} \quad \longmapsto \quad
    (\varphi \colon X\to \V,\,
    x \mapsto \bigvee\{u\in D\mid x\in B_u\})
  \end{equation*}
  define a bijection between the sets
  \begin{equation*}
    \V^X\qquad\text{and}\qquad
    \{(B_u)_{u\in D}\mid
    \text{for all $u\in D$, }B_u\subseteq X\; \&\; B_{u}=\bigcap_{v\lll u}B_{v}\}.
  \end{equation*}
\end{lemma}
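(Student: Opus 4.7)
The plan is to verify that the two displayed assignments are inverse bijections, which reduces to three checks: the forward assignment lands in the prescribed target set, and each round-trip is the identity. The backward assignment plainly produces a map $X\to\V$, so no target check is needed there.

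First I would verify the forward direction. Given $\varphi\colon X\to\V$, set $B_u=\varphi^{-1}(\upc u)$ for $u\in D$. By Remark~\ref{d:rem:3}, $\upc u=\bigcap\{\upc v\mid v\in D,\, v\lll u\}$, and since preimage commutes with intersection, $B_u=\bigcap_{v\lll u}B_v$, so the family lies in the prescribed set.

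Next I would show that starting from $\varphi$ and going back recovers $\varphi$ itself. The round-trip sends $x$ to $\bigvee\{u\in D\mid u\leq\varphi(x)\}$, which sits between $\bigvee\{u\in D\mid u\lll\varphi(x)\}=\varphi(x)$ (by Assumption~\ref{d:ass:2}) and $\varphi(x)$, hence equals $\varphi(x)$.

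For the reverse direction, let $(B_u)_{u\in D}$ satisfy the closure condition and put $\varphi(x)=\bigvee\{u\in D\mid x\in B_u\}$. The key preliminary observation is that the condition automatically forces \emph{antitonicity}: if $u\leq u'$ in $D$, then any $v$ with $v\lll u$ also satisfies $v\lll u'$, so $B_{u'}=\bigcap_{v\lll u'}B_v\subseteq\bigcap_{v\lll u}B_v=B_u$. The inclusion $B_u\subseteq\varphi^{-1}(\upc u)$ is immediate from the definition of $\varphi$. For the converse, assume $u\leq\varphi(x)=\bigvee\{v\in D\mid x\in B_v\}$ and fix $w\in D$ with $w\lll u$; applying the definition of $\lll$ to $A=\{v\in D\mid x\in B_v\}$ produces some $v\in A$ with $w\leq v$, and antitonicity yields $x\in B_v\subseteq B_w$. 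Since $w\lll u$ was arbitrary, $x\in\bigcap_{w\lll u}B_w=B_u$.

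The main obstacle is this last inclusion: it is where the closure condition $B_u=\bigcap_{v\lll u}B_v$ earns its keep, both to recast the membership $x\in B_u$ as a conjunction over $w\lll u$ and, more subtly, to force the antitonicity of $(B_u)_{u\in D}$ that turns the interpolation property of $\lll$ into a workable argument.
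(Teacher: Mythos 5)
Your proof is correct. The paper states this lemma without proof (``the following well-known fact''), and your argument --- forward containment via Remark~\ref{d:rem:3}, the first round-trip via Assumption~\ref{d:ass:2} squeezing $\bigvee\{u\in D\mid u\le\varphi(x)\}$ between $\bigvee\{u\in D\mid u\lll\varphi(x)\}$ and $\varphi(x)$, and the second round-trip via the antitonicity forced by the closure condition together with the defining property of $\lll$ --- is precisely the standard argument the authors are alluding to, with all the steps (including the easily overlooked antitonicity of $(B_u)_{u\in D}$) correctly supplied.
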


\begin{remark}
  Under the bijection above, a map $a \colon X\times X \longrightarrow \V$
  corresponds to a family $(R_u)_{u\in D}$ of binary relations $R_u$ on $X$.
\end{remark}

\begin{proposition}
  A \(\V\)-relation $a \colon X\times X \longrightarrow \V$ is reflexive if and
  only if $\Delta_X\subseteq R_k$. Moreover,
  $a \colon X\times X \longrightarrow \V$ is transitive if and only if, for all
  $u,v\in D$, $R_u\cdot R_v\subseteq R_{u\otimes v}$.
\end{proposition}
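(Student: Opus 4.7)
The plan is to unravel both directions from the definition $R_u = a^{-1}(\upc u) = \{(x,y)\in X\times X \mid u\le a(x,y)\}$ supplied by Lemma~\ref{d:lem:1}. Reflexivity is immediate: $k\le a(x,x)$ for all $x\in X$ translates directly to $(x,x)\in R_k$ for all $x$, i.e.\ $\Delta_X\subseteq R_k$, and conversely. So the substantive content is the equivalence between transitivity of $a$ and the family of relational inclusions $R_u\cdot R_v\subseteq R_{u\otimes v}$ for $u,v\in D$.

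For the forward direction, I would assume $a(x,y)\otimes a(y,z)\le a(x,z)$ and fix $u,v\in D$ and $(x,z)\in R_u\cdot R_v$. By definition of the composition of relations, there is some $y\in X$ with $(x,y)\in R_u$ and $(y,z)\in R_v$, so $u\le a(x,y)$ and $v\le a(y,z)$. Monotonicity of $\otimes$ then yields $u\otimes v\le a(x,y)\otimes a(y,z)\le a(x,z)$, witnessing $(x,z)\in R_{u\otimes v}$.

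The converse direction is where the hypotheses of Assumption~\ref{d:ass:2} must be invoked. Assuming $R_u\cdot R_v\subseteq R_{u\otimes v}$ for all $u,v\in D$, I would fix $x,y,z\in X$ and use the approximation from below to write $a(x,y)=\bigvee\{u\in D\mid u\lll a(x,y)\}$ and $a(y,z)=\bigvee\{v\in D\mid v\lll a(y,z)\}$. Because $\otimes$ has a right adjoint in each variable it preserves arbitrary suprema, and commutativity gives
\begin{equation*}
  a(x,y)\otimes a(y,z)=\bigvee\{u\otimes v\mid u,v\in D,\; u\lll a(x,y),\; v\lll a(y,z)\}.
\end{equation*}
For each such pair $u,v$, since $u\lll a(x,y)$ implies $u\le a(x,y)$, we have $(x,y)\in R_u$; similarly $(y,z)\in R_v$. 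The hypothesis yields $(x,z)\in R_u\cdot R_v\subseteq R_{u\otimes v}$, i.e.\ $u\otimes v\le a(x,z)$. Taking the join over all admissible pairs gives $a(x,y)\otimes a(y,z)\le a(x,z)$, as required.

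The only step that needs more than a direct unwinding of definitions is the distribution of $\otimes$ over the double supremum in the backward direction; this is precisely why the restriction to $u,v\in D$ in the statement is sufficient, and the main (mild) obstacle is simply to be explicit that Assumption~\ref{d:ass:2} is what allows us to approximate an arbitrary value $a(x,y)\in\V$ from below by elements of the countable set $D$.
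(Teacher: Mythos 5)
Your proof is correct. The paper gives no argument of its own here --- it simply defers to the cited reference of Seal (2009) --- and your direct verification is exactly the computation that citation points to, so there is no substantive divergence to report. Two minor points are worth making explicit. First, in the step ``$u\lll a(x,y)$ implies $(x,y)\in R_u$'' you are using that the totally below relation is contained in $\le$ (take $A=\{a(x,y)\}$ in its definition); this is what lets you pass from the approximating set to membership in $R_u$. Second, whichever convention one adopts for the order of relational composition in $R_u\cdot R_v$, commutativity of $\otimes$ makes the forward and backward computations come out the same, so your argument is insensitive to that choice. Your closing observation is also the right one: the only place Assumption~\ref{d:ass:2} enters is in writing $a(x,y)$ and $a(y,z)$ as suprema of elements of $D$ totally below them, after which the quantale identity
\begin{equation*}
  \Bigl(\bigvee_{u} u\Bigr)\otimes\Bigl(\bigvee_{v} v\Bigr)=\bigvee_{u,v}\,u\otimes v
\end{equation*}
(valid because $\otimes$ has a right adjoint in each variable, and covering the degenerate case of an empty index set, where both sides are $\bot$) finishes the backward direction.
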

\begin{proof}
  See \citep{Sea09}.
\end{proof}

\begin{remark}
  A $\V$-category $(X,a)$ is separated if and only if the relation $R_k$ on
  \(X\) is anti-symmetric.
\end{remark}

Therefore the structure of a \(\V\)-category can be equivalently described by a
family of binary relations, suitably interconnected. Since a map
\(f \colon X \longrightarrow Y\) between \(\V\)-categories is a \(\V\)-functor
if and only if \(f\) preserves the corresponding relations, we obtain at once:

\begin{corollary}\label{d:cor:2}
  The categories $\Cats{\V}$ and $\Cats{\V}_{\sep}$ are model categories in
  $\SET$ of an $\aleph_1$-ary countable limit sketch.
\end{corollary}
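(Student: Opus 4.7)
The plan is to package the reformulation supplied by Lemma~\ref{d:lem:1} and the subsequent propositions into an explicit $\aleph_{1}$-ary limit sketch $\mathcal{S}$. The key idea is to replace the single $\V$-valued datum $a \colon X \times X \to \V$ by its equivalent presentation as a family $(R_u)_{u \in D}$ of binary relations on $X$. Assumption~\ref{d:ass:2} ensures that $D$ is countable, so that both the family itself and each of the relevant intersections $\bigcap_{v \in D,\, v \lll u} R_v$ are indexed by countable sets.

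For the underlying small category of $\mathcal{S}$ I would take an object $X$, a product object $X^{(2)}$, an object $R_u$ with a distinguished morphism $\iota_u \colon R_u \to X^{(2)}$ for each $u \in D$, and a fibre product object $P_{u,v}$ for each pair $(u,v) \in D \times D$, with the attendant projections to $R_u$, $R_v$, and $X$. The distinguished cones would then be: (i) the binary product cone forcing $X^{(2)} = X \times X$; (ii) for each $u \in D$, the countable cone forcing $R_u = \bigcap_{v \in D,\, v \lll u} R_v$ as a subobject of $X^{(2)}$; (iii) for each pair $(u,v)$, the pullback cone defining $P_{u,v}$; and (iv) for each $u$, a pullback cone forcing $\iota_u$ to be a monomorphism in every model, obtained by declaring $R_u$ itself, with identity projections, to be the pullback of $\iota_u$ with itself. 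The $\V$-category axioms are then encoded equationally: include a morphism $r \colon X \to R_k$ satisfying $\iota_k \circ r = \delta_X$ (reflexivity), and for each $(u,v)$ a morphism $t_{u,v} \colon P_{u,v} \to R_{u \otimes v}$ compatible with the canonical map $P_{u,v} \to X^{(2)}$ (transitivity). For the separated variant I would add one further pullback cone forcing $R_k \cap s(R_k)$ to factor through the diagonal $\delta_X \colon X \to X^{(2)}$, where $s \colon X^{(2)} \to X^{(2)}$ denotes the swap.

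By Lemma~\ref{d:lem:1} together with the proposition characterising reflexivity and transitivity in relational terms, the $\SET$-models of $\mathcal{S}$ are precisely $\V$-categories, and morphisms of models (natural transformations) match $\V$-functors, since these are exactly the maps that preserve every $R_u$. The main obstacle, as I see it, is the encoding of transitivity, which is \emph{prima facie} an existentially quantified Horn condition (``there exists a witness $y$ such that\ldots'') rather than a limit condition. The standard resolution is to give the witnessing triple the official status of the object $P_{u,v}$ inside the sketch, after which the axiom reduces to the purely diagrammatic requirement that $t_{u,v}$ make the expected triangle commute. Every indexing category appearing in $\mathcal{S}$ --- $\{1,2\}$ for binary products, the pullback span, and $\{v \in D \colon v \lll u\} \subseteq D$ for the intersections --- is countable, so $\mathcal{S}$ is $\aleph_{1}$-ary as required.
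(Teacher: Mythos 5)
Your proposal is correct and is essentially the paper's own argument made explicit: the paper derives the corollary directly from Lemma~\ref{d:lem:1}, the proposition characterising reflexivity and transitivity via the relations $R_u$, and the observation that $\V$-functors are exactly the $R_u$-preserving maps, and your sketch $\mathcal{S}$ is the standard diagrammatic packaging of that data. The only point to watch is that $k$ and $u\otimes v$ need not lie in $D$, so the objects $R_k$ and $R_{u\otimes v}$ must themselves be introduced as the countable intersections $\bigcap\{R_w\mid w\in D,\ w\lll k\}$ and $\bigcap\{R_w\mid w\in D,\ w\lll u\otimes v\}$ (Remark~\ref{d:rem:3}), which keeps the sketch countable and $\aleph_1$-ary.
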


\begin{remark}
  We do not know yet wether \(\Cats{\V}_{\sep,\cc}\) is locally
  presentable. However, we note that in \citep{AMMU15} this property is proven
  for \(\V=[0,1]_{\luk}\), that is, for the case of bounded metric spaces.
\end{remark}

We turn now our attention to \(\V\)-categorical compact Hausdorff spaces. First
we observe that Proposition~\ref{d:prop:5} as well as some of its consequences
generalise directly to the topological case.

\begin{proposition}
  The ``discrete'' functors \(\ftD \colon\COMPHAUS \longrightarrow\CatCHs{\V}\)
  and \(\ftD \colon\COMPHAUS \longrightarrow\CatCHs{\V}_{\sep}\) preserve
  non-empty limits. If \(k=\top\) is the top-element of \(\V\), then \(\ftD\)
  preserves also the terminal object.
\end{proposition}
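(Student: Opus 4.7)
The plan is to exploit the fact that the forgetful functor $\CatCHs{\V} \longrightarrow \COMPHAUS$ is topological, so limits in $\CatCHs{\V}$ are obtained by lifting the limit in $\COMPHAUS$ with the corresponding initial $\V$-categorical structure. Given a non-empty diagram $D \colon I \longrightarrow \COMPHAUS$ with limit cone $(p_i \colon L \longrightarrow D(i))_{i\in I}$ in $\COMPHAUS$, my candidate for $\lim(\ftD \circ D)$ in $\CatCHs{\V}$ is the compact Hausdorff space $L$ equipped with the initial $\V$-structure induced by the cone $(p_i \colon L \longrightarrow \ftD D(i))_{i\in I}$.

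The key step is to verify that this initial structure is itself discrete. Since $I$ is non-empty, the limit cone factors $L$ as a closed subspace of a non-empty product $\prod_i D(i)$ and is therefore jointly injective, i.e., point-separating. By the characterisation of initial cones in $\CatCHs{\V}$ recalled in the remark preceding the monadicity theorem of Section~2, a point-separating cone is initial exactly when
\begin{equation*}
  a(x,y) = \bigwedge_{i \in I} a_i(p_i(x), p_i(y)),
\end{equation*}
where each $a_i$ is the discrete structure on $D(i)$. When $x = y$ every term is $k$, so the meet is $k$; when $x \neq y$, some $p_{i_0}$ separates $x$ and $y$ and the corresponding term is $\bot$, forcing the meet to $\bot$. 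Hence $a$ is the discrete structure on $L$, so $\lim(\ftD \circ D) \cong \ftD L$.

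For the terminal object, the terminal $\V$-categorical compact Hausdorff space must carry $a(*,*) = \top$ (otherwise not every map would be a $\V$-functor), whereas $\ftD 1$ carries $a(*,*) = k$; these agree precisely when $k = \top$. For the separated case, assuming $\V$ is non-trivial so that $k \not\leq \bot$, every discrete $\V$-categorical compact Hausdorff space is separated, and $\CatCHs{\V}_\sep$ is closed in $\CatCHs{\V}$ under point-separating initial cones, so non-empty limits in $\CatCHs{\V}_\sep$ are computed as in $\CatCHs{\V}$ and the argument transfers verbatim. I do not anticipate a serious obstacle; the only point requiring explicit attention is the point-separation of non-empty limit cones in $\COMPHAUS$, which is essential because the meet formula for initial cones in $\CatCHs{\V}$ is stated only for point-separating cones.
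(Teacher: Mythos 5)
Your proof is correct and is essentially the argument the paper intends: the proposition is stated there without proof, as a direct topological generalisation of the corresponding fact for \(\ftD\colon\SET\longrightarrow\Cats{\V}\), and the mechanism is exactly the one you use --- topologicity of \(\CatCHs{\V}\longrightarrow\COMPHAUS\) together with the meet formula for point-separating initial cones, which returns the discrete structure over a non-empty index set and the indiscrete one-point structure (hence \(a(*,*)=\top\)) for the empty diagram. Your side remarks (point-separation of non-empty limit cones in \(\COMPHAUS\), closure of \(\CatCHs{\V}_{\sep}\) under initial monocones, non-triviality of \(\V\) so that discrete objects are separated) are all the right things to check.
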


Regarding copresentable compact Hausdorff spaces, we recall the following result
from \citep[6.5(c)]{GU71} (see also \citep{Ulm71a}).
\begin{theorem}
  \label{d:thm:5}
  \begin{enumerate}
  \item The finitely copresentable compact Hausdorff spaces are precisely the
    finite ones.
  \item The \(\aleph_1\)-copresentable compact Hausdorff spaces are precisely
    the metrisable ones. In particular, the unit interval \([0,1]\) is
    \(\aleph_{1}\)-copresentable in \(\COMPHAUS\).
  \end{enumerate}
\end{theorem}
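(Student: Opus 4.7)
The plan rests on the Bourbaki criterion together with the Tychonoff embedding $X\hookrightarrow [0,1]^{C(X,[0,1])}$, which presents every compact Hausdorff space as a closed subspace of a cube.

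For item (2), I would write the indexing set $I=C(X,[0,1])$ as the $\aleph_1$-directed union of its countable subsets $F$. The images $X_F\subseteq [0,1]^F$ are metrisable compact Hausdorff spaces (since $[0,1]^F$ is metrisable when $F$ is countable), and the Bourbaki criterion identifies $X$ with the $\aleph_1$-cofiltered limit of the system $(X_F)_F$. Hence if $X$ is $\aleph_1$-copresentable, the identity $\mathrm{id}_X$ factors through some projection $X\to X_F$, exhibiting $X$ as a retract of a metrisable space and therefore metrisable. For the converse, every metrisable compact Hausdorff space embeds into $[0,1]^{\aleph_0}$, so it suffices to show that $[0,1]$ is $\aleph_1$-copresentable; the class of $\aleph_1$-copresentable objects is closed under countable products and under passage to regular subobjects in $\COMPHAUS$. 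To show $[0,1]$ is $\aleph_1$-copresentable, I would take a continuous $f\colon L\to [0,1]$ from an $\aleph_1$-cofiltered limit $L=\lim L_i$ and approximate $f$ uniformly by a Cauchy sequence of functions, each factoring through some single stage $L_{i_n}$ (by Stone--Weierstrass inside $C(L,\mathbb{R})$); the $\aleph_1$-cofilteredness of the indexing diagram then yields a common upper bound of $\{i_n\mid n\in\mathbb{N}\}$, through which the uniform limit $f$ itself factors.

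For item (1), the direction \emph{finite implies finitely copresentable} is a direct Bourbaki-criterion argument: a continuous map from a cofiltered limit $L=\lim L_i$ into a finite (hence discrete) space $X$ is a finite clopen partition of $L$, each piece of which is both compact and open and hence a finite union of basic cylinders pulled back from some stage $L_i$; refining the indexing set and using normality of each $L_i$ one descends the entire partition to a single stage. For the converse, observe that by item (2) any finitely copresentable space is already metrisable, so the task reduces to ruling out the infinite metrisable case; here I would invoke Gelfand duality $\COMPHAUS^{\op}\sim\CSALG$ and argue that a finitely presentable commutative $C^*$-algebra is finite-dimensional, so that its Gelfand spectrum is finite.

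The main obstacle is the proof that $[0,1]$ is $\aleph_1$-copresentable: the uniform approximation by stage-factoring functions must be carried out with enough care that the common refinement provided by $\aleph_1$-cofilteredness genuinely accommodates the uniform limit (in particular, that the sequence $(L_{i_n})_n$ can be aligned with a single universal upper bound whose projection captures $f$). The converse in item (1) also requires a genuinely non-trivial input beyond Bourbaki-style formalities; Gelfand duality supplies the cleanest route, though a direct combinatorial argument excluding $[0,1]$ itself as finitely copresentable would also suffice.
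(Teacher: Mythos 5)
The paper offers no proof of this statement --- it is quoted from Gabriel--Ulmer \citep[6.5(c)]{GU71} --- so your proposal is necessarily a different route, and its architecture (cube embedding, Bourbaki criterion, Stone--Weierstrass for the $\aleph_1$-copresentability of $[0,1]$) is the standard and essentially correct one. In particular, the point you flag as the main obstacle is not one: after choosing a lower bound $j$ of the countably many indices $i_n$ and descending each $f_n=g_n\cdot p_{i_n}$ to $h_n\colon L_j\to[0,1]$, the restrictions $h_n|_{\im p_j}$ are uniformly Cauchy because $p_j$ surjects onto $\im p_j$ and $\|h_n-h_m\|_{\im p_j}=\|f_n-f_m\|_L$; the uniform limit on $\im p_j$ extends to $L_j$ by Tietze, and $f$ factors through $p_j$.

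There are, however, three genuine soft spots. (a) The claim that $\aleph_1$-copresentable objects are ``closed under passage to regular subobjects'' is not a valid general principle ($\lambda$-copresentable objects are closed under $\lambda$-small limits and retracts, not under arbitrary regular subobjects), and as used it is circular, since the regular subobjects of the Hilbert cube $Q=[0,1]^{\mathbb{N}}$ are exactly the metrisable compacta you are trying to treat. The repair is to present a closed $A\subseteq Q$ as the equaliser of the pair $d(-,A),\,0\colon Q\rightrightarrows[0,1]$ (with $d$ a compatible metric truncated at $1$): this is a finite limit of objects already known to be $\aleph_1$-copresentable. (b) Copresentability demands that the comparison map $\mathrm{colim}_i\hom(D(i),X)\to\hom(\lim_i D(i),X)$ be bijective; you only establish surjectivity. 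Injectivity --- two maps $g,g'\colon D(i)\to[0,1]$ with $g\cdot p_i=g'\cdot p_i$ already agree after composing with some $D(j\to i)$ --- follows from the image condition $\im p_i=\bigcap_{j\to i}\im D(j\to i)$ of the Bourbaki criterion, compactness applied to the closed sets $\{|g-g'|\ge 1/n\}$, and a countable lower bound in the index category; it should be recorded. (c) For the converse of item (1), the Gelfand-duality detour merely trades the problem for the unproved (though true) assertion that finitely presentable commutative $C^{*}$-algebras are finite-dimensional, which is no easier. A direct argument is available: an infinite compact metrisable space $X$ is a continuous image of the Cantor set $C=\lim_n\{0,1\}^n$, and if $X$ were finitely copresentable this surjection would factor through a finite stage $\{0,1\}^n$, forcing $X$ to be finite.
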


\begin{corollary}
  \label{d:cor:5}
  For every regular cardinal \(\lambda\), the forgetful functors
  \(\ftII{-} \colon\CatCHs{\V} \longrightarrow\COMPHAUS\) and
  \(\ftII{-} \colon\CatCHs{\V}_{\sep} \longrightarrow\COMPHAUS\) preserve
  \(\lambda\)-copresentable objects. In particular, every finitely copresentable
  (separated) \(\V\)-categorical compact Hausdorff space is finite and every
  \(\aleph_{1}\)-copresentable (separated) \(\V\)-categorical compact Hausdorff
  space has a metrizable topology.
\end{corollary}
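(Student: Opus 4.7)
The plan is to exploit the adjunction $\ftD \dashv \ftII{-}$ together with the fact, established in the proposition just above, that the discrete functor $\ftD \colon \COMPHAUS \longrightarrow \CatCHs{\V}$ preserves non-empty limits (and likewise for $\CatCHs{\V}_\sep$). The abstract lemma I would invoke is standard: if $\ftF \dashv \ftU$ is an adjunction and $\ftF$ preserves $\lambda$-cofiltered limits, then $\ftU$ preserves $\lambda$-copresentable objects.

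To verify this lemma in our setting, let $X$ be a $\lambda$-copresentable object of $\CatCHs{\V}$ and let $D \colon I \longrightarrow \COMPHAUS$ be a $\lambda$-cofiltered diagram with limit cone $(p_i \colon L \longrightarrow D(i))_{i \in I}$. Since $I$ is $\lambda$-cofiltered it is non-empty, so $\ftD$ carries this limit to the limit of the $\lambda$-cofiltered diagram $\ftD D$ in $\CatCHs{\V}$. The chain of natural isomorphisms
\begin{align*}
  \hom_{\COMPHAUS}(L,\ftII{X})
  &\cong \hom_{\CatCHs{\V}}(\ftD L, X)\\
  &\cong \hom_{\CatCHs{\V}}(\lim_i \ftD D(i), X)\\
  &\cong \operatorname{colim}_i \hom_{\CatCHs{\V}}(\ftD D(i), X)\\
  &\cong \operatorname{colim}_i \hom_{\COMPHAUS}(D(i),\ftII{X})
\end{align*}
— in which the first and last steps are the adjunction $\ftD \dashv \ftII{-}$, the second uses preservation of the limit by $\ftD$, and the third uses $\lambda$-copresentability of $X$ in $\CatCHs{\V}$ — then witnesses that $\ftII{X}$ is $\lambda$-copresentable in $\COMPHAUS$.

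The separated version is proved by exactly the same argument applied to the adjunction $\ftD \dashv \ftII{-}$ between $\COMPHAUS$ and $\CatCHs{\V}_\sep$. The ``In particular'' statements then follow immediately from Theorem~\ref{d:thm:5}: the underlying compact Hausdorff space of a finitely copresentable object is finitely copresentable in $\COMPHAUS$ and hence finite, and that of an $\aleph_1$-copresentable object is $\aleph_1$-copresentable in $\COMPHAUS$ and hence metrizable. No substantial obstacle is anticipated; the only point demanding a little care is to restrict attention to \emph{non-empty} diagrams when invoking the preservation property of $\ftD$, but this restriction is built into the definition of $\lambda$-cofiltered index categories.
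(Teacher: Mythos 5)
Your argument is correct and is essentially the paper's intended one: the corollary is meant to follow from the preceding proposition via the adjunction $\ftD\dashv\ftII{-}$, exactly as in the earlier proof that $\ftII{-}\colon\Cats{\V}\longrightarrow\SET$ preserves copresentable objects (where the paper writes $\hom(-,\ftII{X})\simeq\hom(\ftD-,X)$), followed by Theorem~\ref{d:thm:5} for the ``in particular'' claims. Your explicit note that $\lambda$-cofiltered index categories are non-empty, so that preservation of non-empty limits by $\ftD$ suffices, is exactly the point the paper leaves implicit.
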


We are particularly interested in properties of the space \(\V\). We start with
the following observation.

\begin{proposition}\label{d:prop:4}
  A subbase for the Lawson topology on \(\V\) is given by the sets
  \begin{equation*}
    \{u\in\V\mid v\lll u\}
    \quad\text{and}\quad
    \{u\in\V\mid v\nleq u\}\qquad (v\in D).
  \end{equation*}
\end{proposition}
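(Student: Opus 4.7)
The plan is to combine three ingredients already established in the paper: the description of the Lawson topology as the join of the Scott topology and its dual (Remark~\ref{d:rem:2}), the subbasis $\{u \mid v \lll u\}$ ($v \in \V$) for the Scott topology (Proposition~\ref{p:0}), and the reduction from arbitrary $v \in \V$ to $v \in D$ provided by Assumption~\ref{d:ass:2}.

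First, I would note that the Lawson topology is generated as a subbasis for opens by the sets $\{u \in \V \mid v \lll u\}$ and $\{u \in \V \mid v \nleq u\}$ ranging over \emph{all} $v \in \V$: the former come from the Scott topology (Proposition~\ref{p:0}) and the latter are complements of the subbasic Scott-dual closed sets $\upc v$ listed in Remark~\ref{d:rem:2}(\ref{d:item:1}). So the task reduces to showing that each such set, for arbitrary $v \in \V$, is a union of sets of the same form with parameters taken from the countable set $D$.

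For the Scott side, I would prove the identity
\begin{equation*}
  \{u \in \V \mid v \lll u\} \;=\; \bigcup\{\{u \in \V \mid w \lll u\} \mid w \in D,\; v \leq w\}.
\end{equation*}
The inclusion ``$\supseteq$'' is immediate since $v \leq w$ and $w \lll u$ yield $v \lll u$ from the very definition of $\lll$. For ``$\subseteq$'', given $v \lll u$, I would apply Assumption~\ref{d:ass:2} to write $u = \bigvee\{w \in D \mid w \lll u\}$; the definition of $v \lll u$ then produces some $w \in D$ with $w \lll u$ and $v \leq w$, as required. For the Scott-dual side, I would similarly use Assumption~\ref{d:ass:2} applied to $v$, namely $v = \bigvee\{w \in D \mid w \lll v\}$, to deduce that $v \leq u$ if and only if $w \leq u$ for every $w \in D$ with $w \lll v$; this yields
\begin{equation*}
  \{u \in \V \mid v \nleq u\} \;=\; \bigcup\{\{u \in \V \mid w \nleq u\} \mid w \in D,\; w \lll v\},
\end{equation*}
which is essentially the dual formulation of Remark~\ref{d:rem:3}.

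I don't anticipate any serious obstacle: the argument is essentially a bookkeeping exercise combining complete distributivity with the countable approximating subset. The only care needed is to keep the two directions of $\lll$ straight when reducing parameters to $D$ --- on the Scott side the inequality is $v \leq w$ with $w \lll u$, whereas on the Scott-dual side the inequality is $w \lll v$ with $w \nleq u$ --- but both reductions are immediate from Assumption~\ref{d:ass:2}.
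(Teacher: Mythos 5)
Your proposal is correct and follows essentially the same route as the paper: reduce via the join description of the Lawson topology to the Scott and lower subbasic sets for arbitrary $v\in\V$, then rewrite each as a union of sets with parameters in $D$ using Assumption~\ref{d:ass:2}. The only (harmless) difference is on the Scott side, where you index the union by $w\in D$ with $v\le w\lll u$ instead of interpolating to $v\lll w\lll u$ as the paper does; both identities hold and yield the same conclusion.
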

\begin{proof}
  By definition, the Lawson topology is the join of the Scott topology and the
  lower topology of \(\V\) (see Remark~\ref{d:rem:2}); we recall that the latter
  is generated by the sets \((\upc v)^{\complement}\), with \(v\in\V\). Since
  the lattice \(\V\) is completely distributive, the Scott topology of \(\V\)
  has as subbase the sets (see Proposition~\ref{p:0})
  \begin{equation*}
    \{u\in\V\mid v\lll u\},
  \end{equation*}
  with \(v\in\V\). Since ``generated topology'' defines a left adjoint, the sets
  \begin{equation*}
    \{u\in\V\mid v\lll u\}
    \quad\text{and}\quad
    \{u\in\V\mid v\nleq u\}\qquad (v\in \V)
  \end{equation*}
  form a subbase for the Lawson topology of \(\V\). Let now \(v\in\V\). For each
  \(v\lll u\in\V\), there is some \(w\in D\) with \(v\lll w\lll u\), therefore
  \begin{equation*}
    \{u\in\V\mid v\lll u\}=\bigcup_{w\in D,v\lll w}\{u\in\V\mid w\lll u\}.
  \end{equation*}
  Finally, since \(v\in\bigvee\{w\in D\mid w\lll v\}\), we obtain
  \(\upc v=\bigcap \{\upc w\mid w\in D, w\lll v\}\) and therefore
  \((\upc v)^{\complement}=\bigcup \{(\upc w)^{\complement}\mid w\in D, w\lll
  v\}\).
\end{proof}

\begin{corollary}
  The Lawson topology makes \(\V\) a $\aleph_{1}$-copresentable object in
  $\COMPHAUS$.
\end{corollary}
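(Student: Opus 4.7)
The plan is a short chain of standard facts. By Assumption~\ref{d:ass:2}, the set $D\subseteq\V$ is countable, so Proposition~\ref{d:prop:4} exhibits a countable subbase for the Lawson topology of $\V$. From this I would conclude that $\V$ is second countable, since the collection of finite intersections of subbasic open sets is countable and forms a base.

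Next, I would invoke the fact, already established in Remark~\ref{d:rem:2}, that the Lawson topology makes $\V$ compact Hausdorff. A compact Hausdorff space with a countable base is metrisable by Urysohn's metrisation theorem.

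Finally, by the second part of Theorem~\ref{d:thm:5}, the $\aleph_{1}$-copresentable compact Hausdorff spaces are precisely the metrisable ones, so $\V$ is $\aleph_{1}$-copresentable in $\COMPHAUS$.

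There is no real obstacle here since every ingredient has already been stated or is a classical result; the only point that required work was identifying a countable subbase for the Lawson topology on $\V$, which is exactly what Proposition~\ref{d:prop:4} provides under Assumption~\ref{d:ass:2}.
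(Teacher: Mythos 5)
Your argument is correct and is essentially identical to the paper's own proof: Proposition~\ref{d:prop:4} together with Assumption~\ref{d:ass:2} gives a countable subbase, hence a countable base, hence metrisability of the compact Hausdorff Lawson topology, and Theorem~\ref{d:thm:5} finishes the job. Nothing to add.
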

\begin{proof}
  By Proposition~\ref{d:prop:4}, the Lawson topology on \(\V\) has a countable
  subbase and therefore also a countable base. Hence, \(\V\) with the Lawson
  topology is a metrizable compact Hausdorff space and therefore, by
  Theorem~\ref{d:thm:5}, $\aleph_{1}$-copresentable in $\COMPHAUS$.
\end{proof}

We shall now extend Corollary~\ref{d:cor:2} to the topological context and show
that \(\CatCHs{\V}\) is a model category of a limit sketch in \(\COMPHAUS\). To
prepare this, we recall an alternative way of expressing the compatibility
between topology and \(\V\)-categories which is closer to \citeauthor{Nac65}'s
original definition.

\begin{proposition}\label{d:prop:3}
  For a $\V$-category $(X,a)$ and a $\mU$-algebra $(X,\alpha)$ with the same
  underlying set $X$, the following assertions are equivalent.
  \begin{tfae}
  \item $\alpha \colon \ftU(X,a) \longrightarrow (X,a)$ is a $\V$-functor.
  \item $a \colon (X,\alpha)\times(X,\alpha) \longrightarrow (\V,\xi_\le)$ is
    continuous.
  \end{tfae}
\end{proposition}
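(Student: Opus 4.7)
The plan is to translate both conditions into assertions about ultrafilters on $X\times X$ and see that they say the same thing. Two ingredients are crucial: first, the description of the $\V$-category structure $\ftU a$ on $\ftU X$ provided by the topological theory $\utheory$, namely
\[
  \ftU a(\fx,\fy) \;=\; \bigvee\bigl\{\,\xi\cdot\ftU a(\mathfrak{w}) \;\big|\; \mathfrak{w}\in\ftU(X\times X),\; \ftU\pi_1(\mathfrak{w})=\fx,\; \ftU\pi_2(\mathfrak{w})=\fy\,\bigr\},
\]
where on the right $\ftU a\colon\ftU(X\times X)\to\ftU\V$ denotes the pushforward of the map $a\colon X\times X\to\V$; and second, the characterisation in Remark~\ref{d:rem:2}~\eqref{d:item:1} that an ultrafilter $\fv$ on $\V$ converges to $v$ in $\xi_\le$ if and only if $\xi(\fv)\le v$. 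Finally, since the compact Hausdorff topology on $X\times X$ is the product structure $\alpha\times\alpha$, an ultrafilter $\mathfrak{w}$ on $X\times X$ converges to $(x,y)$ exactly when $\alpha(\ftU\pi_1\mathfrak{w})=x$ and $\alpha(\ftU\pi_2\mathfrak{w})=y$.

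With this preparation the two conditions read as follows. Continuity of $a$ becomes: for every $\mathfrak{w}\in\ftU(X\times X)$ projecting to ultrafilters $\fx,\fy$ with $\alpha(\fx)=x$, $\alpha(\fy)=y$, one has $\xi\cdot\ftU a(\mathfrak{w})\le a(x,y)$. $\V$-functoriality of $\alpha$ becomes: $\ftU a(\fx,\fy)\le a(\alpha\fx,\alpha\fy)$ for all $\fx,\fy\in\ftU X$. For (ii)$\Rightarrow$(i), I would fix $\fx,\fy$, set $x=\alpha\fx$ and $y=\alpha\fy$, and take the supremum of the inequalities $\xi\cdot\ftU a(\mathfrak{w})\le a(x,y)$ over all $\mathfrak{w}$ projecting to $(\fx,\fy)$; the displayed formula then yields $\ftU a(\fx,\fy)\le a(x,y)$. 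Conversely, for (i)$\Rightarrow$(ii), given $\mathfrak{w}$ converging to $(x,y)$ I would set $\fx=\ftU\pi_1\mathfrak{w}$ and $\fy=\ftU\pi_2\mathfrak{w}$; then $\xi\cdot\ftU a(\mathfrak{w})$ appears among the terms of the supremum defining $\ftU a(\fx,\fy)$, so (i) gives $\xi\cdot\ftU a(\mathfrak{w})\le\ftU a(\fx,\fy)\le a(\alpha\fx,\alpha\fy)=a(x,y)$, which is exactly $\xi_\le$-convergence of $\ftU a(\mathfrak{w})$ to $a(x,y)$.

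The main, albeit mild, obstacle is to correctly invoke the Barr-style formula expressing $\ftU a$ on a pair of ultrafilters as a supremum over common lifts to $\ftU(X\times X)$; once this formula and the $\xi_\le$-convergence criterion of Remark~\ref{d:rem:2}~\eqref{d:item:1} are in hand, both implications reduce to interchanging a supremum with a universal quantification and are essentially formal.
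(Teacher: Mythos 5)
Your argument is correct: the paper itself only cites \citep[Proposition~3.22]{HR18} for this statement, and the proof given there proceeds exactly as you do, by observing that both conditions amount to the single inequality $\xi\cdot\ftU a(\mathfrak{w})\le a(\alpha\,\ftU\pi_1\mathfrak{w},\alpha\,\ftU\pi_2\mathfrak{w})$ for all $\mathfrak{w}\in\ftU(X\times X)$, using the supremum formula for the $\xi$-extension of $a$ and the description of $\xi_\le$-convergence as $\xi(\fv)\le v$. (Only a trivial quibble: that convergence description appears in Remark~\ref{d:rem:2}~(\ref{p:1}) rather than in item~(\ref{d:item:1}).)
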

\begin{proof}
  See \citep[Proposition~3.22]{HR18}.
\end{proof}

\begin{lemma}\label{d:lem:6}
  Consider \(\V\) with the dual of the Scott topology. Then, under the
  correspondence of Lemma~\ref{d:lem:1}, $\varphi \colon X \longrightarrow \V$
  is continuous if and only if, for each \(u\in D\), $B_u$ is closed in $X$.
\end{lemma}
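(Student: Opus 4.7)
The proof is essentially a direct translation between the two presentations of a map $\varphi \colon X \to \V$ afforded by Lemma~\ref{d:lem:1}, exploiting the explicit description of the dual Scott topology that has already been established.

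First, I would invoke Remark~\ref{d:rem:3}: under Assumption~\ref{d:ass:2}, the family
\begin{equation*}
  \{\upc u \mid u \in D\}
\end{equation*}
forms a subbase of closed sets for the dual of the Scott topology on $\V$. This is what reduces the problem from arbitrary $v \in \V$ to the countable set $D$; without Assumption~\ref{d:ass:2} one would only have the larger subbase $\{\upc v \mid v \in \V\}$ and the statement as written would fail to match the indexing of the relational presentation.

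Next, I would apply the standard fact that a map between topological spaces is continuous if and only if the preimage of every element of a subbase of closed sets is closed. Combined with the preceding paragraph, this gives: $\varphi$ is continuous with respect to the dual Scott topology on $\V$ if and only if $\varphi^{-1}(\upc u)$ is closed in $X$ for every $u \in D$.

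Finally, I would read off from Lemma~\ref{d:lem:1} that the bijection assigns to $\varphi$ precisely the family $B_u = \varphi^{-1}(\upc u)$, so the condition ``$\varphi^{-1}(\upc u)$ closed for all $u \in D$'' is literally the condition ``$B_u$ closed for all $u \in D$''. No serious obstacle arises; the only nontrivial ingredient is Remark~\ref{d:rem:3}, and everything else is unfolding definitions.
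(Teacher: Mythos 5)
Your proposal is correct and follows exactly the paper's (very terse) argument: the paper's proof consists solely of invoking Remark~\ref{d:rem:3} that the sets \(\upc u\) \((u\in D)\) form a subbase for the closed sets of the dual Scott topology, leaving the subbase criterion for continuity and the identification \(B_u=\varphi^{-1}(\upc u)\) implicit, which you spell out. No differences worth noting.
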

\begin{proof}
  Recall from Remark~\ref{d:rem:3} that the sets $\upc u$ ($u\in D$) form a
  subbase for the closed sets of the dual of the Scott topology of $\V$.
\end{proof}

Applying Lemma~\ref{d:lem:6} to the map
$a \colon (X,\alpha)\times(X,\alpha) \longrightarrow (\V,\xi_\le)$ of
Proposition~\ref{d:prop:3} gives immediately:

\begin{theorem}
  Both $\CatCHs{\V}$ and $\CatCHs{\V}_\sep$ are model categories in $\COMPHAUS$
  of a countable $\aleph_1$-ary limit sketch. Hence, both categories are locally
  copresentable.
\end{theorem}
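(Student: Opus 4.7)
The plan is to combine the algebraic description of $\V$-categories from Corollary~\ref{d:cor:2} with the characterisation of the topological compatibility from Proposition~\ref{d:prop:3} and Lemma~\ref{d:lem:6}. The key observation is that, once we work in $\COMPHAUS$ rather than $\SET$, the data of a $\V$-categorical compact Hausdorff space $(X,a,\alpha)$ can be encoded by the underlying compact Hausdorff space $(X,\alpha)$ together with a countable family of \emph{closed} subspaces $R_u\hookrightarrow X\times X$ ($u\in D$), satisfying relational conditions which are themselves expressible as limit cones in $\COMPHAUS$.

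Concretely, given $(X,a,\alpha)$ in $\CatCHs{\V}$, set $R_u=a^{-1}(\upc u)\subseteq X\times X$. By Proposition~\ref{d:prop:3}, $a$ is continuous into $(\V,\xi_\le)$, and by Lemma~\ref{d:lem:6} this is equivalent to each $R_u$ being closed in $X\times X$. The bijection of Lemma~\ref{d:lem:1} shows that from $(R_u)_{u\in D}$ we can reconstruct $a$, and Remark~\ref{d:rem:3} gives the interconnection equations $R_u=\bigcap_{v\in D,\,v\lll u}R_v$. Reflexivity translates into $\Delta_X\subseteq R_k$ and transitivity into $R_u\cdot R_v\subseteq R_{u\otimes v}$ for all $u,v\in D$; separatedness is encoded by anti-symmetry of $R_k$, i.e.\ $R_k\cap R_k^{\circ}\subseteq\Delta_X$.

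Each of these axioms can be expressed as a limit condition in $\COMPHAUS$: a closed subspace is an equaliser, an inclusion $S\subseteq T$ of closed subspaces of $X\times X$ is also an equaliser condition, intersections are pullbacks, and the relational composite $R_u\cdot R_v$ is the image of the pullback $R_u\times_X R_v\to X\times X$, which is automatically closed since $\COMPHAUS$ has image factorisations with closed images. Assembling this, one defines a small sketch $\mathcal{S}$ whose sorts are $X$ together with one subspace sort $R_u$ for each $u\in D$, equipped with the specified cones encoding the four families of axioms above. Since $D$ is countable and each axiom involves a countable (in fact, in most cases finite) number of sorts, the sketch is countable and $\aleph_1$-ary, and its models in $\COMPHAUS$ are precisely the objects of $\CatCHs{\V}$; the separated version is obtained by adjoining the anti-symmetry cone.

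The second assertion then follows from the dual of the classical Gabriel--Ulmer theorem for sketches: models of an $\aleph_1$-ary limit sketch in a locally $\aleph_1$-presentable category form a locally $\aleph_1$-presentable category. Applying this to $\COMPHAUS^{\op}$, which by Theorem~\ref{d:thm:5} is locally $\aleph_1$-presentable, yields that $\CatCHs{\V}^{\op}$ and $\CatCHs{\V}_\sep^{\op}$ are locally $\aleph_1$-presentable, i.e.\ $\CatCHs{\V}$ and $\CatCHs{\V}_\sep$ are locally copresentable. The main subtlety in the argument is formulating the transitivity axiom $R_u\cdot R_v\subseteq R_{u\otimes v}$ as a limit cone; this requires checking that the image of a morphism between closed subspaces in $\COMPHAUS$ is again closed, so that relational composition is available within the category of closed subspaces, and then describing the inclusion as an equaliser in the sketch.
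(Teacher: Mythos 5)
Your argument is correct and follows essentially the same route as the paper, which obtains the sketch by combining the relational presentation of Corollary~\ref{d:cor:2} with Proposition~\ref{d:prop:3} and Lemma~\ref{d:lem:6} (closedness of each $R_u$ in $X\times X$), and then invokes \citep[Remark~2.63]{AR94} for local copresentability. The only remark worth making is that the ``main subtlety'' you flag for transitivity is avoidable: the condition $R_u\cdot R_v\subseteq R_{u\otimes v}$ is precisely the requirement that the canonical map $R_u\times_X R_v\to X\times X$ factor through the subobject $R_{u\otimes v}\hookrightarrow X\times X$, which is a purely diagrammatic (commutativity) condition in the sketch and needs no appeal to image factorisations in $\COMPHAUS$.
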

\begin{proof}
  For the second affirmation, use \citep[Remark~2.63]{AR94}.
\end{proof}

\begin{remark}
  At this moment we do not have any information about the rank of the locally
  presentable category $\CatCHs{\V}^{\op}$; in particular, we do not know if
  $\CatCHs{\V}^{\op}$ is \(\aleph_{1}\)-ary locally copresentable.
\end{remark}

In order to obtain more information on copresentable objects in $\CatCHs{\V}$,
we adapt now \citep[Lemma~2.2]{Hof02a} to the case of a general regular
cardinal. Here we call a \(\lambda\)-ary limit sketch
$\calS=(\catC,\calL,\sigma)$ \df{\(\lambda\)-small} whenever there is a set
\(M\) of morphisms in \(\catC\) of cardinality less than \(\lambda\) so that
every morphism of \(\catC\) is a finite composite of morphisms in \(M\). Hence,
for \(\lambda>\aleph_{0}\), we require the category \(\catC\) to be
\(\lambda\)-small.

\begin{lemma}\label{d:lem:4}
  Let $\lambda$ be a regular cardinal and let $\calS=(\catC,\calL,\sigma)$ be a
  $\lambda$-small limit sketch. Then a model of $\calS$ in a category \(\catX\)
  is $\lambda$-copresentable in $\Mod(\calS,\catX)$ provided that each component
  is $\lambda$-copresentable in $\catX$.
\end{lemma}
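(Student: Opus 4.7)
My plan is to unpack the definition of $\lambda$-copresentability directly. By definition, $M$ is $\lambda$-copresentable in $\Mod(\calS,\catX)$ if and only if the hom-functor $\Mod(\calS,\catX)(-,M)$ converts $\lambda$-cofiltered limits in $\Mod(\calS,\catX)$ into $\lambda$-filtered colimits in $\SET$. The starting point is the standard observation that the forgetful functor $\Mod(\calS,\catX) \longrightarrow [\catC,\catX]$ creates limits, since any pointwise limit of models preserves every cone of $\calL$. Consequently, if $D \colon I \longrightarrow \Mod(\calS,\catX)$ is a $\lambda$-cofiltered diagram with limit cone $(p_i \colon L \longrightarrow D(i))_{i\in I}$, then for every object $c$ of $\catC$ the induced cone $(p_{i,c} \colon L(c) \longrightarrow D(i)(c))_{i\in I}$ is a $\lambda$-cofiltered limit in $\catX$.

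Given a morphism $f \colon L \longrightarrow M$ in $\Mod(\calS,\catX)$, my goal is to factor it essentially uniquely through some $p_{i_0}$. Using $\lambda$-copresentability, each component $f_c$ factors as $g_c \circ p_{i(c),c}$ for some $i(c) \in I$ and some $g_c \colon D(i(c))(c) \longrightarrow M(c)$. The hypothesis that $\calS$ is $\lambda$-small forces the cardinalities of both the morphism set and the object set of $\catC$ to stay below $\lambda$, so that $\lambda$-filteredness supplies a single index $i_0$ and a family $(g_c \colon D(i_0)(c) \longrightarrow M(c))_{c\in\catC}$ satisfying $g_c \circ p_{i_0,c} = f_c$ simultaneously.

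It remains to refine $i_0$ so that this family becomes a natural transformation, and hence a morphism of models. For each generating morphism $\phi \colon c \longrightarrow c'$ of $\catC$, the two parallel morphisms $M(\phi)\circ g_c$ and $g_{c'} \circ D(i_0)(\phi)$ from $D(i_0)(c)$ to $M(c')$ become equal after precomposition with $p_{i_0,c}$, since both then reduce to $f_{c'} \circ L(\phi)$ by the naturality of $f$ and of the limit cone. The essentially-unique-factorisation part of $\lambda$-copresentability, applied to $M(c')$, produces an index at which the discrepancy vanishes; combining the fewer-than-$\lambda$ such indices via $\lambda$-filteredness yields the required $i_1 \leq i_0$. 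Uniqueness of the factorisation is handled by the same refinement technique applied to two candidate factorisations.

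The main obstacle is precisely this bookkeeping: reducing the full naturality requirement, together with the uniqueness check, to a collection of fewer than $\lambda$ conditions so that $\lambda$-filteredness can accommodate all of them at a single stage of the diagram. This is exactly where the $\lambda$-smallness hypothesis on $\calS$ does its work, by keeping the number of objects, generating morphisms, and induced coherence conditions strictly below $\lambda$.
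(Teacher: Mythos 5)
Your proof is correct and is precisely the standard componentwise argument behind the reference the paper gives for this lemma (\citep[Lemma~2.2]{Hof02a}), adapted from the finitary case to a general regular cardinal $\lambda$: compute the $\lambda$-cofiltered limit pointwise, factor each component using copresentability in $\catX$, and then use $\lambda$-smallness of $\calS$ together with $\lambda$-filteredness of $I^{\op}$ to merge the fewer-than-$\lambda$ choices, coherence conditions, and the uniqueness check at a single stage. One small slip in the write-up: for $\lambda=\aleph_0$ the $\lambda$-smallness hypothesis bounds only the set of \emph{generating} morphisms of $\catC$ (the full morphism set may well be infinite), but this is harmless since, as you in fact do, naturality need only be enforced on generators and then propagates to composites.
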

\begin{proof}
  See \citep[Lemma~2.2]{Hof02a}.
\end{proof}

By Assumption~\ref{d:ass:2}, the limit sketch for $\CatCHs{\V}$ is countable
which allows us to derive the following properties.

\begin{corollary}
  A \(\V\)-categorical compact Hausdorff space is $\aleph_1$-ary copresentable
  in $\CatCHs{\V}$ (respectively $\CatCHs{\V}_{\sep}$) if and only if its
  underlying compact Hausdorff space is metrizable. In particular, $\V^{\op}$ is
  $\aleph_1$-ary copresentable in $\CatCHs{\V}$ and in $\CatCHs{\V}_\sep$.
\end{corollary}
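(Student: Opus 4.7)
The plan is to prove both directions separately, then specialise to $\V^{\op}$.

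For the forward direction, suppose $X$ is $\aleph_{1}$-copresentable in $\CatCHs{\V}$ (or in $\CatCHs{\V}_{\sep}$). By Corollary~\ref{d:cor:5}, the forgetful functor $\ftII{-}\colon\CatCHs{\V}\to\COMPHAUS$ preserves $\aleph_{1}$-copresentable objects, so $\ftII{X}$ is $\aleph_{1}$-copresentable in $\COMPHAUS$. By Theorem~\ref{d:thm:5}, this means precisely that $\ftII{X}$ is metrizable.

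For the converse, I would appeal to Lemma~\ref{d:lem:4} together with the explicit description of the countable $\aleph_{1}$-ary limit sketch $\calS=(\catC,\calL,\sigma)$ in $\COMPHAUS$ whose model category is $\CatCHs{\V}$ (and similarly for $\CatCHs{\V}_{\sep}$), as constructed just before the corollary using Lemmas~\ref{d:lem:1} and \ref{d:lem:6} and Proposition~\ref{d:prop:3}. The nodes of $\catC$ carry, for a model coming from a $\V$-categorical compact Hausdorff space $(X,a,\alpha)$, either the underlying compact Hausdorff space $X$, a finite power $X^{n}$, or a closed subspace of such a power (namely the relation $R_{u}\subseteq X\times X$ for each $u\in D$, together with the auxiliary spaces needed to encode reflexivity, transitivity and the coherence $B_{u}=\bigcap_{v\lll u}B_{v}$). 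If $X$ is metrizable, then every finite power $X^{n}$ is metrizable, and every closed subspace of a metrizable compact Hausdorff space is again metrizable; hence each component of the model is metrizable and therefore $\aleph_{1}$-copresentable in $\COMPHAUS$ by Theorem~\ref{d:thm:5}. Lemma~\ref{d:lem:4} then yields that the model, i.e.\ $X$, is $\aleph_{1}$-copresentable in $\CatCHs{\V}$. The argument for $\CatCHs{\V}_{\sep}$ is identical, using the corresponding sketch which adds anti-symmetry of $R_{k}$.

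The ``in particular'' statement follows because the underlying compact Hausdorff topology of $\V^{\op}$ coincides with that of $\V$, namely the Lawson topology, and the preceding corollary (which uses Proposition~\ref{d:prop:4} and Assumption~\ref{d:ass:2}) shows that the Lawson topology on $\V$ has a countable base and is therefore metrizable.

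The main obstacle I anticipate is the bookkeeping in the converse direction: one has to be explicit that the nodes of the sketch built from Lemma~\ref{d:lem:1} and Lemma~\ref{d:lem:6} are only finite products of the underlying space together with closed subspaces, so that metrizability of $\ftII{X}$ really propagates to every component. Once this is spelled out, the application of Lemma~\ref{d:lem:4} is routine, and no further cardinality arithmetic is needed since both $D$ and the resulting sketch are countable by Assumption~\ref{d:ass:2}.
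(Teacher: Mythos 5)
Your proposal is correct and follows exactly the route the paper intends: the forward direction via Corollary~\ref{d:cor:5} and Theorem~\ref{d:thm:5}, the converse via Lemma~\ref{d:lem:4} applied to the countable limit sketch whose components (finite powers of the underlying space and closed subspaces thereof, indexed by the countable set $D$) inherit metrizability, and the ``in particular'' via the countable subbase for the Lawson topology from Proposition~\ref{d:prop:4}. Nothing to add.
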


\begin{corollary}
  \label{d:cor:3}
  If the quantale \(\V\) is finite, then the finitely copresentable objects of
  $\CatCHs{\V}$ (respectively $\CatCHs{\V}_{\sep}$) are precisely the finite
  ones.
\end{corollary}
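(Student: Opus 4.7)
The plan is to establish the two inclusions separately, exploiting the fact that a finite quantale $\V$ trivially satisfies Assumption~\ref{d:ass:2} with $D=\V$ and, crucially, that this makes the limit sketch describing $\CatCHs{\V}$ (and $\CatCHs{\V}_{\sep}$) in $\COMPHAUS$ finitary rather than merely countable.

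One direction is immediate from what has already been done: if $X$ is finitely copresentable in $\CatCHs{\V}$ (respectively $\CatCHs{\V}_{\sep}$), Corollary~\ref{d:cor:5} yields that its underlying compact Hausdorff space is finitely copresentable in $\COMPHAUS$, which by Theorem~\ref{d:thm:5} forces it to be finite.

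For the converse, I would verify that, under the assumption that $\V$ is finite, the limit sketch $\calS=(\catC,\calL,\sigma)$ in $\COMPHAUS$ whose models are the objects of $\CatCHs{\V}$ can be chosen to be $\aleph_{0}$-small. Concretely, taking $D=\V$ in Lemma~\ref{d:lem:1}, a $\V$-category structure on $X$ is encoded by the finite family of binary relations $(R_{u})_{u\in\V}$, subject to the closure conditions $R_{u}=\bigcap_{v\lll u}R_{v}$, reflexivity $\Delta_{X}\subseteq R_{k}$, and transitivity $R_{u}\cdot R_{v}\subseteq R_{u\otimes v}$; with $\V$ finite there are only finitely many such axioms. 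The topological compatibility in Proposition~\ref{d:prop:3} is captured, by Lemma~\ref{d:lem:6}, by requiring each $R_{u}$ to be a closed subspace of $X\times X$, again only finitely many conditions. Hence the sketch category $\catC$ involves only finitely many objects (the underlying space, its square, and the subobjects $R_{u}$) and finitely many morphisms up to composition.

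With this finite sketch in hand, suppose $X\in\CatCHs{\V}$ has finite underlying compact Hausdorff space. Then $X$, $X\times X$, and every closed subspace $R_{u}\subseteq X\times X$ are finite, and therefore finitely copresentable in $\COMPHAUS$ by Theorem~\ref{d:thm:5}. Applying Lemma~\ref{d:lem:4} with $\lambda=\aleph_{0}$ to the finite sketch $\calS$ yields that $X$ is finitely copresentable in $\CatCHs{\V}$; the argument transfers verbatim to $\CatCHs{\V}_{\sep}$ by using the sketch obtained by adjoining the antisymmetry of $R_{k}$. The only point requiring care is the verification that every structural axiom can indeed be phrased as a finite cone in $\catC$, which is routine once one observes that both the transitivity composites and the closure relations $\bigcap_{v\lll u}R_{v}$ become finite meets when $D=\V$ is finite; this is where the finiteness of $\V$ is used in an essential way.
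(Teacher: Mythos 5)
Your proof is correct and follows essentially the paper's intended argument: the paper states this corollary without a separate proof precisely because it is the specialisation of the preceding theorem (the limit sketch presenting $\CatCHs{\V}$ in $\COMPHAUS$), Lemma~\ref{d:lem:4} with $\lambda=\aleph_0$, and Corollary~\ref{d:cor:5} to the case where a finite $\V$ makes the sketch finite and the components of a finite model finitely copresentable in $\COMPHAUS$. Your spelling out of both inclusions, including the observation that one direction is already contained in Corollary~\ref{d:cor:5}, matches this exactly.
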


\begin{remark}
  The conclusion of Lemma~\ref{d:lem:4} is not necessarily optimal. For
  instance, the circle line $\TT=\quot{\RR}{\ZZ}$ is $\aleph_1$-copresentable
  but not finitely copresentable in $\COMPHAUS$ (see \citep[6.5]{GU71}); hence,
  Lemma~\ref{d:lem:4} implies that $\TT$ is $\aleph_1$-copresentable in the
  category $\COMPHAUSAB$ of compact Hausdorff Abelian groups and continuous
  homomorphisms. However, by the famous Pontryagin duality theorem (see
  \citep{Mor77}, for instance), $\TT$ is even finitely copresentable in
  $\COMPHAUSAB$ which cannot be concluded from Lemma~\ref{d:lem:4}.
\end{remark}

\begin{remark}\label{d:rem:1}
  In particular, the finitely copresentable partially ordered compact spaces are
  precisely the finite ones. Moreover, a partially ordered compact space is
  \(\aleph_{1}\)-copresentable in \(\POSCH\) if and only if its underlying
  compact Hausdorff topology is metrisable. This characterisation is slightly
  different from our result in \citep{HNN18} where the
  \(\aleph_{1}\)-copresentable objects in \(\POSCH\) are characterised as those
  spaces where both -- the order and the topology -- are induced by the same
  (not necessarily symmetric) metric.
\end{remark}

The results above also imply that the reflector
$\pi_0 \colon \CatCHs{\V} \longrightarrow\Priests{\V}$ preserves
$\aleph_1$-cofiltered limits. In the classical case, the corresponding property
is shown in \citep[Page~67]{GU71} using Stone duality; however, our proof here
is based on the Bourbaki-criterion.

\begin{proposition}
  \label{d:prop:6}
  The reflection functor $\pi_0 \colon \CatCHs{\V} \longrightarrow\Priests{\V}$
  preserves $\aleph_1$-cofiltered limits (and even cofiltered limits if \(\V\)
  is finite).
\end{proposition}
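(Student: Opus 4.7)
The plan is to combine the Bourbaki-criterion with the \(\aleph_{1}\)-copresentability of \(\V^{\op}\) in \(\CatCHs{\V}\). I would start with an \(\aleph_{1}\)-cofiltered diagram \(D \colon I \longrightarrow \CatCHs{\V}\) with limit cone \((p_{i} \colon L \longrightarrow D(i))_{i\in I}\). Applying \(\pi_{0}\) objectwise and forming the limit cone \((q_{i} \colon L' \longrightarrow \pi_{0}D(i))_{i\in I}\) in \(\Priests{\V}\) -- which is automatically computed in \(\CatCHs{\V}\) because the inclusion \(\Priests{\V} \hookrightarrow \CatCHs{\V}\) is a right adjoint -- yields a canonical comparison morphism \(f \colon \pi_{0}L \longrightarrow L'\) in \(\Priests{\V}\); the goal is then to show that \(f\) is an isomorphism, from which the preservation statement follows.

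For surjectivity, each reflection unit \(D(i) \twoheadrightarrow \pi_{0}D(i)\) is surjective, and since the forgetful functor \(\CatCHs{\V} \longrightarrow \COMPHAUS\) preserves limits, the classical fact that a cofiltered limit of surjections in \(\COMPHAUS\) is surjective -- a direct application of the Bourbaki-criterion to non-empty closed preimages -- yields surjectivity of \(L \longrightarrow L'\); as this map factors as \(L \twoheadrightarrow \pi_{0}L \xrightarrow{f} L'\), also \(f\) is surjective. For injectivity and initiality, I would invoke that \(\V^{\op}\) is \(\aleph_{1}\)-copresentable in \(\CatCHs{\V}\) (by the corollary following Proposition~\ref{d:prop:4}). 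Every morphism \(\psi \colon L \longrightarrow \V^{\op}\) then factors as \(\bar\psi \cdot p_{i}\) for some \(i\), and since \(\V^{\op}\) is Priestley, \(\bar\psi\) further factors through \(D(i) \twoheadrightarrow \pi_{0}D(i)\); hence \(\psi\) factors through \(L \longrightarrow L'\). Using that \(L \twoheadrightarrow \pi_{0}L\) is an epimorphism in \(\CatCHs{\V}\), it follows that every morphism \(\pi_{0}L \longrightarrow \V^{\op}\) factors through \(f\).

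Because \(\pi_{0}L\) is Priestley, the cone of all morphisms \(\pi_{0}L \longrightarrow \V^{\op}\) is point-separating and initial with respect to \(\CatCHs{\V} \longrightarrow \COMPHAUS\), which forces \(f\) to be injective and to satisfy \(a_{\pi_{0}L}(x,x') = a_{L'}(f(x),f(x'))\) for all \(x,x' \in \pi_{0}L\). Since a continuous bijection between compact Hausdorff spaces is a homeomorphism, \(f\) is an isomorphism in \(\CatCHs{\V}\), hence in \(\Priests{\V}\). When \(\V\) is finite, Corollary~\ref{d:cor:3} upgrades \(\aleph_{1}\)-copresentability to finite copresentability of \(\V^{\op}\), so the very same argument applies to arbitrary cofiltered limits. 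I expect the main obstacle to be the bookkeeping that every morphism \(\pi_{0}L \longrightarrow \V^{\op}\) factors through \(f\): this is the precise point where copresentability of \(\V^{\op}\), the defining property of Priestley spaces, and the universal property of \(\pi_{0}\) all need to be invoked simultaneously.
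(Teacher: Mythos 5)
Your proof is correct and rests on the same two pillars as the paper's: the \(\aleph_1\)-copresentability of \(\V^{\op}\) in \(\CatCHs{\V}\) (to obtain the factorisation of morphisms into \(\V^{\op}\), hence injectivity and initiality) and a codirected compactness argument (to obtain the surjectivity/image condition). The difference is only one of packaging: the paper verifies the two conditions of the Bourbaki-criterion directly for the cone \((\pi_0(p_i))_{i\in I}\), whereas you show that the comparison map into the limit of the reflected diagram is an isomorphism -- equivalent formulations of the same argument.
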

\begin{proof}
  Let $(p_i \colon X \longrightarrow D(i))_{i\in I}$ be a $\aleph_1$-cofiltered
  limit in $\CatCHs{\V}$ (\(\aleph_{0}\)-cofiltered if \(\V\) is finite). Since
  $\V^{\op}$ is $\aleph_1$-ary copresentable ($\aleph_0$-ary copresentable if
  \(\V\) is finite) in $\CatCHs{\V}$, the cone of all morphisms of type
  $X \longrightarrow \V^{\op}$ is given by the cone of all morphism
  \begin{equation*}
    X \xrightarrow{\quad p_i\quad}D(i)\xrightarrow{\quad \varphi\quad} \V^{\op}
  \end{equation*}
  where $i\in I$ and $\varphi \colon D(i) \longrightarrow \V^{\op}$ in
  $\CatCHs{[0,1]}$. Hence, for every $i\in I$ and every
  $\varphi \colon X \longrightarrow \V^{\op}$, we obtain the commutative diagram
  \begin{equation*}
    \begin{tikzcd}[ampersand replacement=\&]
      X \ar[r,"",->>]\ar[d,"p_i"'] \& \pi_0(X)
      \ar[d,"\pi_0(p_i)"] \ar[dr,bend left]\\
      D(i) \ar[r,""',->>] \& \pi_0(D(i))\ar[r,"\widetilde{\varphi}"'] \&
      {\V^{\op}}.
    \end{tikzcd}
  \end{equation*}
  Therefore the cone
  $(\pi_0(p_i) \colon \pi_0(X) \longrightarrow \pi_0(D(i)))_{i\in I}$ is initial
  with respect to the forgetful functor $\CatCHs{\V} \longrightarrow\COMPHAUS$.

  Let now $i\in I$ and $x\in D(i)$ with
  $x\in\bigcap\{\im(\pi_0(D(k)))\mid k \colon j\to i\text{ in }I\}$. Let
  $A\subseteq X$ be the inverse image of $x$ under the reflection map
  $D(i) \longrightarrow \pi_0(D(i))$. Then, for every $k \colon j\to i$ in $I$,
  $\varnothing\neq A\cap\im(k)$. Since the set $\{\im(k)\mid k \colon j\to i\}$
  is codirected and $A$ is compact, we obtain
  \begin{equation*}
    \varnothing\neq\bigcap_{k \colon j\to i}A\cap\im D(k)
    =A\cap\bigcap_{k \colon j\to i}\im D(k)=A\cap\im(p_i).
  \end{equation*}
  Therefore $x\in\im(\pi_{0}(p_{i}))$.
\end{proof}

Combining Corollaries~\ref{d:prop:6} and \ref{d:cor:5} we obtain:

\begin{corollary}
  \label{d:cor:6}
  \begin{enumerate}
  \item An object is $\aleph_1$-ary copresentable in $\Priests{\V}$ if and only
    if its underlying compact Hausdorff space is metrizable. In particular,
    $\V^{\op}$ is $\aleph_1$-ary copresentable in $\Priests{\V}$.
  \item Assume that \(\V\) is finite. Then an object is finitely copresentable
    in $\Priests{\V}$ if and only if it is finite. In particular, $\V^{\op}$ is
    finitely copresentable in $\Priests{\V}$.
  \end{enumerate}
\end{corollary}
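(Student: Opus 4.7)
My plan is to reduce both assertions to the analogous statements for $\CatCHs{\V}$ established in Corollary~\ref{d:cor:5}, using the reflector $\pi_{0}$ together with its preservation property from Proposition~\ref{d:prop:6}. The key observation is that for an object $X$ of $\Priests{\V}$, being $\aleph_{1}$-copresentable in $\Priests{\V}$ is actually equivalent to being $\aleph_{1}$-copresentable in $\CatCHs{\V}$, even though a priori the test diagrams come from different classes.

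First I would establish the forward direction: if $X\in\Priests{\V}$ is $\aleph_{1}$-copresentable in $\CatCHs{\V}$, then $X$ is $\aleph_{1}$-copresentable in $\Priests{\V}$. This is immediate because the fully faithful inclusion $\Priests{\V}\hookrightarrow\CatCHs{\V}$ preserves limits (being a right adjoint), so any $\aleph_{1}$-cofiltered limit in $\Priests{\V}$ computes as such in $\CatCHs{\V}$, and $\hom_{\Priests{\V}}(-,X)=\hom_{\CatCHs{\V}}(-,X)$ on the relevant objects. For the converse, I would take an $\aleph_{1}$-cofiltered diagram $D$ in $\CatCHs{\V}$ with limit $L$ and use Proposition~\ref{d:prop:6} to identify $\pi_{0}(L)$ with $\lim \pi_{0}\circ D$ in $\Priests{\V}$; then the adjunction $\pi_{0}\dashv\iota$ yields the chain
\begin{equation*}
  \hom_{\CatCHs{\V}}(L,X)\simeq\hom_{\Priests{\V}}(\pi_{0}L,X)\simeq\colim\hom_{\Priests{\V}}(\pi_{0}D(i),X)\simeq\colim\hom_{\CatCHs{\V}}(D(i),X),
\end{equation*}
where the middle isomorphism uses the hypothesis that $X$ is $\aleph_{1}$-copresentable in $\Priests{\V}$.

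Having reduced the question to $\CatCHs{\V}$, the characterisation follows from Corollary~\ref{d:cor:5}: being $\aleph_{1}$-copresentable in $\CatCHs{\V}$ forces the underlying compact Hausdorff space to be metrizable, and conversely, once the underlying space is metrizable (hence $\aleph_{1}$-copresentable in $\COMPHAUS$ by Theorem~\ref{d:thm:5}), Lemma~\ref{d:lem:4} applied to the countable $\aleph_{1}$-ary limit sketch for $\CatCHs{\V}$ in $\COMPHAUS$ delivers copresentability of $X$ itself. The ``in particular'' assertion then reduces to checking that the Lawson topology on $\V^{\op}$ is metrizable, which we already recorded via the countable subbase produced in Proposition~\ref{d:prop:4}.

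For part (2) the argument is parallel, simply replacing ``$\aleph_{1}$-cofiltered'' by ``cofiltered'' throughout: Proposition~\ref{d:prop:6} explicitly asserts that $\pi_{0}$ preserves all cofiltered limits when $\V$ is finite, and Theorem~\ref{d:thm:5} characterises the finitely copresentable compact Hausdorff spaces as the finite ones. The only point that deserves some care --- and which I expect to be the main subtlety --- is the interaction between the two adjunctions: one must be sure that Proposition~\ref{d:prop:6} truly applies to arbitrary $\aleph_{1}$-cofiltered diagrams in $\CatCHs{\V}$ (not only those already in $\Priests{\V}$), since we use it precisely to transport test diagrams from the larger category into the reflective subcategory. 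Once that is granted, the equivalence between copresentability in the two categories is essentially a formal consequence of the reflection, and the rest of the proof is book-keeping.
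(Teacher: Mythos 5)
Your argument is correct and matches the paper's own proof: the paper likewise deduces that the inclusion $\Priests{\V}\hookrightarrow\CatCHs{\V}$ preserves $\aleph_{1}$-copresentable (resp.\ finitely copresentable) objects because its left adjoint $\pi_{0}$ preserves $\aleph_{1}$-cofiltered (resp.\ cofiltered) limits by Proposition~\ref{d:prop:6}, and reflects them because $\Priests{\V}$ is closed under limits, then invokes the corresponding characterisation of copresentable objects in $\CatCHs{\V}$. The subtlety you flag is harmless, since Proposition~\ref{d:prop:6} is stated for arbitrary $\aleph_{1}$-cofiltered limits in $\CatCHs{\V}$.
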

\begin{proof}
  Since the left adjoint $\pi_0 \colon \CatCHs{\V} \longrightarrow\Priests{\V}$
  of $\Priests{\V}\longrightarrow\CatCHs{\V}$ preserves
  \(\aleph_{1}\)-codirected limits, the inclusion functor
  $\Priests{\V}\longrightarrow\CatCHs{\V}$ preserves
  \(\aleph_{1}\)-copresentable objects. Furthermore, since \(\Priests{\V}\) is
  closed in \(\CatCHs{\V}\) under limits,
  $\Priests{\V}\longrightarrow\CatCHs{\V}$ reflects \(\aleph_{1}\)-copresentable
  objects. The second affirmation follows similarly.
\end{proof}

\begin{theorem}
  \label{d:thm:4}
  The category $\Priests{\V}$ is locally $\aleph_1$-ary copresentable. If \(\V\)
  is finite, then $\Priests{\V}$ is even locally $\aleph_0$-ary copresentable.
\end{theorem}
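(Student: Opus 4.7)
The plan is to verify the dual of the standard Adámek--Rosický characterisation (see \citep[Theorem~1.20]{AR94}): a complete category is locally $\lambda$-copresentable if and only if it admits a strong cogenerator consisting of $\lambda$-copresentable objects. The candidate strong cogenerator is the single object $\V^{\op}$, a choice essentially forced by the definition of a Priestley space.

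Three ingredients must be checked. First, $\Priests{\V}$ is complete by Corollary~\ref{d:cor:1}. Second, $\V^{\op}$ is $\aleph_{1}$-copresentable in $\Priests{\V}$, and finitely copresentable when $\V$ is finite; both statements are already packaged in Corollary~\ref{d:cor:6}. Third, $\V^{\op}$ must be shown to be a strong cogenerator, and this is where the work lies. By the very definition of $\Priests{\V}$, for each $X$ the cone $(\varphi \colon X \longrightarrow \V^{\op})_{\varphi}$ is point-separating and initial with respect to $\CatCHs{\V} \longrightarrow \COMPHAUS$, and the observation recorded after Corollary~\ref{d:cor:1} ensures that initiality in $\Priests{\V}$ coincides with initiality in $\CatCHs{\V}$.

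The main obstacle I anticipate is upgrading ``point-separating and initial monocone'' to the extremal-monocone condition needed to conclude that $\V^{\op}$ is actually a strong---indeed regular---cogenerator. The plan is to exploit the (surjective, initial monocone)-factorisation system on $\CatCHs{\V}$ from the theorem preceding the definition of $\Priests{\V}$. Because $\Priests{\V}$ is reflective in $\CatCHs{\V}$ and closed under the relevant monocones, this factorisation system restricts to $\Priests{\V}$, so the canonical cone $(\varphi \colon X \longrightarrow \V^{\op})_{\varphi}$ exhibits $X$ as the apex of an extremal monocone into a power of $\V^{\op}$. Combining this with completeness and the copresentability rank of $\V^{\op}$ yields local $\aleph_{1}$-copresentability, and the same argument with $\aleph_{0}$ in place of $\aleph_{1}$ handles the case when $\V$ is finite.
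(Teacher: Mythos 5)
Your first two ingredients (completeness via Corollary~\ref{d:cor:1}, and the copresentability rank of $\V^{\op}$ via Corollary~\ref{d:cor:6}) are fine, but the third one --- that $\V^{\op}$ is a \emph{strong} cogenerator --- has a genuine gap, and it is exactly the step where the difficulty sits. The (surjective, initial monocone)-factorisation system only yields that the canonical cone $(\varphi \colon X \longrightarrow \V^{\op})_{\varphi}$ is extremal with respect to \emph{surjections}: if it factors through a surjective morphism, that morphism is an isomorphism. The dual of \citep[Theorem~1.20]{AR94} requires extremality with respect to \emph{epimorphisms of $\Priests{\V}$}, i.e.\ that the induced map $X \longrightarrow (\V^{\op})^{I}$ does not factor through a proper epimorphism. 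Epimorphisms of $\CatCHs{\V}$ are indeed surjective (the forgetful functor to $\COMPHAUS$ is topological, hence cocontinuous, hence preserves epimorphisms), but $\Priests{\V}$ is only a \emph{reflective} subcategory, so its epimorphisms may a priori be non-surjective ``dense'' embeddings --- exactly as happens for Hausdorff spaces inside $\TOP$. If the canonical cone factors through an epimorphic embedding $e \colon X \longrightarrow Z$ with $Z$ Priestley, nothing you have cited forces $e$ to be surjective; all you get is that every morphism $X \longrightarrow \V^{\op}$ extends along $e$, which is no contradiction.

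The paper only establishes ``embeddings in $\Priests{\V}$ are regular monomorphisms'' and ``epimorphisms in $\Priests{\V}$ are surjective'' much later, under the additional hypotheses of Assumption~\ref{d:ass:3} and compactness of the $\V$-category $\V$ --- and that argument itself relies on presenting each Priestley space as a cofiltered limit of small pieces, which is essentially what Theorem~\ref{d:thm:4} is about. So at this point your route is either circular or requires hypotheses the theorem does not assume. The paper's proof sidesteps strong cogenerators entirely: by the Bourbaki-criterion, every $X$ in $\Priests{\V}$ is the limit of its canonical diagram over the (essentially small) full subcategory of $\aleph_{1}$-copresentable objects; the cone is point-separating and initial because $\V^{\op}$ is itself $\aleph_{1}$-copresentable, and the second Bourbaki condition holds via image factorisations since metrizability passes to closed subspaces. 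Codensity of this small subcategory together with completeness then gives local $\aleph_{1}$-copresentability, with the same argument at $\aleph_{0}$ when $\V$ is finite. To rescue your approach you would first need to prove that the canonical map $X \longrightarrow (\V^{\op})^{I}$ is an extremal monomorphism in $\Priests{\V}$, e.g.\ via a regular-injectivity property of $\V^{\op}$, which is not available here.
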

\begin{proof}
  By the Bourbaki-criterion, every $X$ in $\Priests{\V}$ is a limit of the
  canonical diagram of $X$ with respect to the full subcategory of
  \(\Priests{\V}\) defined by all \(\aleph_{1}\)-copresentable objects. Since
  $\Priests{\V}$ is complete, we conclude that $\Priests{\V}$ is locally
  $\aleph_1$-ary copresentable. If \(\V\) is finite, the same argument works
  with \(\aleph_{0}\) \emph{in lieu} of \(\aleph_{1}\).
\end{proof}

\begin{remark}
  By Corollary~\ref{d:cor:6}, the fully faithful functor
  \begin{equation*}
    \ftC \colon \Priests{[0,1]_\luk}\longrightarrow\FinLats{[0,1]_\luk}^\op
  \end{equation*}
  of Theorem~\ref{d:thm:2} preserves $\aleph_1$-filtered limits which allows for
  an alternative proof of Corollary~\ref{d:prop:6} for $\otimes=\luk$: Firstly,
  the dualising object \([0,1]\) induces a natural dual adjunction (see
  \citep{PT91})
  \begin{displaymath}
    \begin{tikzcd}[column sep=huge] 
      \CatCHs{[0,1]_{\luk}} %
      \ar[shift left, start anchor=east, end anchor=west, bend left=25,
      ""{name=U,below}]%
      {r}{\ftC=\hom(-,[0,1])} %
      & \FinLats{[0,1]_\luk}^\op %
      \ar[start anchor=west,end anchor=east,shift left,bend left=25,
      ""{name=D,above}] %
      {l}{\hom(-,[0,1])} \ar[from=U,to=D,"\bot" description]
    \end{tikzcd}
  \end{displaymath}
  where the fixed subcategory on the left-hand side is precisely
  \(\Priests{[0,1]_\luk}\). Then the functor
  $\pi_0 \colon\CatCHs{[0,1]_\luk} \longrightarrow\Priests{[0,1]_\luk}$ is the
  composite of the functor
  $\ftC \colon \CatCHs{[0,1]_\luk} \longrightarrow\FinLats{[0,1]_\luk}^\op$ and
  the right adjoint functor
  $\FinLats{[0,1]_\luk}^\op \longrightarrow\Priests{[0,1]_\luk}$ above (see
  \citep[Theorem~2.0]{LR79}, and note that, for every \(L\) in
  \(\FinLats{[0,1]_\luk}\), the space \(\hom(L,[0,1])\) is Priestley by
  construction).
\end{remark}

Next, we link \(\V\)-categorical compact Hausdorff spaces with compact
\(\V\)-categories. To do so, we also \emph{impose now the following condition}.
\begin{assumption}
  \label{d:ass:3}
  For the neutral element \(k\) of \(\V\), the set
  \begin{equation*}
    \{u\in\V\mid u\lll k\}
  \end{equation*}
  is directed.
\end{assumption}

Then \(\bot<k\) and, for all \(u,v\in\V\),
\begin{equation*}
  k\le u\vee v \implies (k\le u\quad\text{ou}\quad k\le v);
\end{equation*}
which guarantees that the L-closure is topological (see
\citep[Proposition~3.3]{HT10}). Moreover, under this condition, a separated
\(\V\)-category $X$ induces a Hausdorff topology; if this topology is compact,
\(X\) becomes a \(\V\)-categorical compact Hausdorff space (see
\citep[][Theorem~3.28 and Propositions~3.26 and 3.29]{HR18}). We let
\(\Cats{\V}_{\sep,\comp}\) denote the full subcategory of \(\Cats{\V}_{\sep}\)
defined by those \(\V\)-categories with compact topology, then this construction
defines a fully faithful functor
\begin{equation*}
  \Cats{\V}_{\sep,\comp} \longrightarrow \CatCHs{\V}_{\sep}.
\end{equation*}

From Lemma~\ref{d:lem:2} and Corollary~\ref{d:cor:4} we obtain immediately:

\begin{corollary}
  Let $f \colon X \longrightarrow Y$ be in $\Cats{\V}_{\sep,\comp}$. Then
  \begin{enumerate}
  \item \(f\) is a regular monomorphism in $\CatCHs{\V}_\sep$ if and only if
    \(f\) is an embedding, and
  \item \(f\) is an epimorphism in $\CatCHs{\V}_\sep$ if and only if \(f\) is
    surjective.
  \end{enumerate}
\end{corollary}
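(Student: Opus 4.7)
My plan rests on a single key observation that makes the corollary follow from Lemma~\ref{d:lem:2} and Corollary~\ref{d:cor:4}: for objects of $\Cats{\V}_{\sep,\comp}$, the L-closure coincides with the topological closure. This is already guaranteed by the immediately preceding discussion, where Assumption~\ref{d:ass:3} ensures that the L-closure is a topological closure operator (via the cited \citep[Proposition~3.3]{HT10}) and the topology it generates on a separated $\V$-category is precisely the compact Hausdorff topology witnessing membership in $\Cats{\V}_{\sep,\comp}$. In particular, compact subsets of $Y$ coincide with L-closed subsets.

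For (1), I would argue both directions using that $X$ is compact and $Y$ is Hausdorff. If $f$ is an embedding, then $f(X)$ is compact hence topologically closed in $Y$, and by the key observation also L-closed; so $f$ is a closed embedding in $\Cats{\V}_{\sep}$ and by Corollary~\ref{d:cor:4} a regular monomorphism there, realised as an equalizer of a pair $Y \rightrightarrows Z$ in $\Cats{\V}_\sep$. One then verifies that the same equalizer diagram lives in $\CatCHs{\V}_{\sep}$ (the equalizer subspace inherits the compact Hausdorff topology from $Y$, and $Z$ can be chosen among compact separated objects, e.g.\ a power of $\V$ with its Lawson topology), so $f$ is a regular monomorphism in $\CatCHs{\V}_{\sep}$ as well. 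Conversely, a regular monomorphism in $\CatCHs{\V}_{\sep}$ is an equalizer, whose underlying subspace of $Y$ is topologically closed and carries the induced $\V$-structure; this exhibits $f$ as an embedding.

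For (2), the implication "surjective $\Rightarrow$ epi" is immediate. Conversely, suppose $f \colon X \to Y$ is epi in $\CatCHs{\V}_{\sep}$ but not surjective and pick $y_0 \in Y \setminus f(X)$. Then $f(X)$ is compact and therefore topologically closed in $Y$; by the key observation, $y_0 \notin \overline{f(X)}$ in the L-closure sense. Applying Lemma~\ref{d:lem:2} with $M = f(X)$, there exist $\V$-functors $g, h \colon Y \to \V$ agreeing on $f(X)$ but differing at $y_0$. Since the topology on $Y$ is the one induced by its $\V$-category structure, and $\V$ carries the Lawson topology making it an object of $\CatCHs{\V}_{\sep}$, such $g, h$ are automatically continuous, hence morphisms in $\CatCHs{\V}_{\sep}$; this contradicts the assumption that $f$ is epi.

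The main obstacle is the bookkeeping around the three ambient categories $\Cats{\V}_{\sep}$, $\Cats{\V}_{\sep,\comp}$ and $\CatCHs{\V}_{\sep}$: one must ensure that the separating $\V$-functors produced by Lemma~\ref{d:lem:2} land in $\CatCHs{\V}_{\sep}$ and that the equalizing pair used for regular monos does too. Both points reduce to the compatibility between $\V$-structure and topology encoded in Assumption~\ref{d:ass:3}, which is precisely what lets the abstract characterisations of Corollary~\ref{d:cor:4} transport from $\Cats{\V}_{\sep}$ to $\CatCHs{\V}_{\sep}$ for objects whose topology comes from the $\V$-structure.
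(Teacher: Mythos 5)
Your proposal follows essentially the route the paper intends: identify the topology of an object of $\Cats{\V}_{\sep,\comp}$ with its L-topology, so that compact images are L-closed, and then import the characterisations of Corollary~\ref{d:cor:4} by means of the separating $\V$-functors supplied by Lemma~\ref{d:lem:2}; both parts are structured correctly. The one step you label ``automatic'' is, however, where the real content of the transfer from $\Cats{\V}_{\sep}$ to $\CatCHs{\V}_{\sep}$ sits: you need the $\V$-functors $g,h \colon Y \longrightarrow \V$ produced by Lemma~\ref{d:lem:2} to be continuous into $\V$ equipped with the \emph{Lawson} topology, since that is the topology making $\V$ an object of $\CatCHs{\V}_{\sep}$. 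Functoriality of the L-closure only yields continuity of $g,h$ from the L-topology of $Y$ into the L-topology of the $\V$-category $(\V,\hom)$; to conclude, one must also know that the Lawson topology on $\V$ is coarser than this L-topology, which holds because every $\upc u$ and every $\downc u$ is L-closed --- precisely the fact the paper records in the lemma immediately following this corollary, citing \citep[Remark~4.27]{HN20}. This is a separate property of the L-closure of $(\V,\hom)$ and not a formal consequence of Assumption~\ref{d:ass:3} making the L-closure topological, as your closing paragraph suggests. Once that point is supplied, your equalizing pair into a power of $\V$ for part (1) and your separating pair for part (2) do land in $\CatCHs{\V}_{\sep}$, and the argument closes.
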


\begin{lemma}
  If the \(\V\)-category \(\V\) is compact, then the L-topology on \(\V\)
  coincides with the Lawson topology.
\end{lemma}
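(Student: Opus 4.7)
The plan is to exhibit both the Lawson topology and the L-topology as compact Hausdorff topologies on \(\V\) making \((\V,\hom)\) into a \(\V\)-categorical compact Hausdorff space, and then to show that the L-topology is coarser than the Lawson topology. The equality will then follow from the classical fact that a continuous bijection from a compact space to a Hausdorff space is a homeomorphism.

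By Remark~\ref{d:rem:2}, the Lawson topology endows \((\V, \hom)\) with a \(\V\)-categorical compact Hausdorff structure. Under the hypothesis that the \(\V\)-category \(\V\) is compact (meaning its L-topology is compact), together with Assumption~\ref{d:ass:3} which guarantees Hausdorffness, the fully faithful embedding \(\Cats{\V}_{\sep,\comp} \to \CatCHs{\V}_{\sep}\) recalled just above the lemma equips \((\V, \hom)\) with a second \(\V\)-categorical compact Hausdorff structure, namely the one coming from the L-topology.

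The crucial step is to prove that the L-topology is coarser than the Lawson topology. By Lemma~\ref{d:lem:2}, every L-closed subset \(C \subseteq \V\) can be written as an intersection of equalizers \(E_{f,g} = \{u \in \V : f(u) = g(u)\}\) ranging over all pairs of \(\V\)-functors \(f, g \colon \V \to \V\) satisfying \(f|_C = g|_C\). Since the Lawson topology on \(\V\) is Hausdorff, each such \(E_{f,g}\) is Lawson-closed whenever both \(f\) and \(g\) are Lawson-continuous. Hence it suffices to prove that every \(\V\)-functor \(\V \to \V\) is Lawson-continuous. I would approach this by combining the defining inequality \(\hom(u, v) \leq \hom(f(u), f(v))\) with the explicit Lawson convergence \(\xi(\fv) = \bigwedge_{A \in \fv}\bigvee A\) from Remark~\ref{d:rem:2} and the complete distributivity of \(\V\) (Assumption~\ref{d:ass:1}), aiming to show that \(f(\xi(\fv)) = \xi(\ftU f \cdot \fv)\) for every ultrafilter \(\fv\) on \(\V\).

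Once the L-topology is coarser than the Lawson topology and since both are compact Hausdorff, the identity map \((\V, \text{Lawson}) \to (\V, \text{L})\) is a continuous bijection from a compact space to a Hausdorff space, hence a homeomorphism, giving the desired equality. The main obstacle is the Lawson-continuity of arbitrary \(\V\)-functors \(\V \to \V\): monotonicity follows at once from the \(\V\)-functor inequality, but preserving ultrafilter limits at the level of the Lawson structure requires a delicate use of complete distributivity and the interaction between the tensor, the internal hom, and the totally below relation \(\lll\).
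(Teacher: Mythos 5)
Your overall strategy---compare two compact Hausdorff topologies on \(\V\) via a continuous identity map---is sound, but the containment you try to establish goes in the wrong direction, and the step you defer to the end, namely that every \(\V\)-functor \(\V\longrightarrow\V\) is Lawson-continuous, is false under the paper's standing assumptions. Take \(\V=\Pp\) and let \(g\colon[0,\infty]\longrightarrow[0,\infty]\) be given by \(g(u)=0\) for \(u<\infty\) and \(g(\infty)=1\). This \(g\) is non-expansive, hence a \(\Pp\)-functor: the only non-trivial case is the pair \((\infty,u)\) with \(u\) finite, where the \(\Pp\)-distance from \(\infty\) to \(u\) is \(u\ominus\infty=0\) and the distance from \(g(\infty)=1\) to \(g(u)=0\) is \(0\ominus 1=0\). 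But \(g\) is not continuous for the Lawson topology, which on \([0,\infty]\) is the usual compact topology. So your equalizer decomposition of L-closed sets does not produce Lawson-closed sets; indeed for \(\Pp\) the L-topology is strictly \emph{finer} than the Lawson topology (\(\{\infty\}\) is L-open). A telling warning sign is that your argument for the containment never uses the hypothesis that the \(\V\)-category \(\V\) is compact: if it worked, it would show that the L-topology on \(\V\) is always compact Hausdorff, which \(\Pp\) refutes. And restricting to compact \(\V\) does not rescue the plan, because Lawson-continuity of all \(\V\)-functors \(\V\longrightarrow\V\) in that case is essentially a consequence of the lemma itself (\(\V\)-functors are always L-continuous), so the route becomes circular where it is not false.

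The containment that is actually available is the opposite one. The paper invokes \citep[Remark~4.27]{HN20} to see that the sets \(\upc u\) and \(\downc u\) are L-closed for every \(u\in\V\); by Remark~\ref{d:rem:2} these sets form a subbasis for the closed sets of the Lawson (interval) topology, and since Assumption~\ref{d:ass:3} makes the L-closure topological, every Lawson-closed set is L-closed. Thus the Lawson topology is \emph{coarser} than the L-topology, and your final compact-to-Hausdorff argument then applies with the roles reversed: the identity \((\V,\mathrm{L})\longrightarrow(\V,\mathrm{Lawson})\) is a continuous bijection from a compact space---this is precisely where the hypothesis that the \(\V\)-category \(\V\) is compact enters---to a Hausdorff space, hence a homeomorphism.
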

\begin{proof}
  By \citep[][Remark~4.27]{HN20}, for every \(u \in \V\), the sets \(\upc u\)
  and \(\downc u\) are closed in \(\V\) with respect to the L-closure.
\end{proof}

\begin{example}
  In particular, the L-closure on the $[0,1]_{\luk}$-category $[0,1]$ induces
  the Euclidean topology with convergence $\xi$.
\end{example}

\begin{corollary}
  Assume that the \(\V\)-category \(\V\) is compact. Then we have a fully
  faithful functor
  \begin{displaymath}
    \Cats{\V}_{\sep,\comp} \longrightarrow\Priests{\V},
  \end{displaymath}
  and every \(\V\)-enriched Priestley space is a cofiltered limit of compact
  separated \(\V\)-categories. Moreover:
  \begin{itemize}
  \item every embedding $f \colon X \longrightarrow Y$ in $\Priests{\V}$ is a
    regular monomorphism, and
  \item therefore the epimorphisms in $\Priests{\V}$ are precisely the
    surjective morphisms.
  \end{itemize}
  Consequently, \(\V^{\op}\) is a regular cogenerator in \(\Priests{\V}\).
\end{corollary}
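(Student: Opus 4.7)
The plan is to establish the claims in the order listed, using the Bourbaki-criterion repeatedly as in the classical template of Remark~\ref{d:rem:4}.

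First, to see the functor $\Cats{\V}_{\sep,\comp} \longrightarrow \Priests{\V}$ is well-defined and fully faithful, only well-definedness requires work, since $\Cats{\V}_{\sep,\comp} \hookrightarrow \CatCHs{\V}_{\sep}$ is already known to be fully faithful. Under the assumption that $\V$ is compact, the preceding lemma combined with Remark~\ref{d:rem:2} gives that the L-topology on $\V$ coincides with the Lawson topology, and the same for $\V^{\op}$; hence every $\V$-functor $X \longrightarrow \V^{\op}$ out of a compact separated $\V$-category is automatically continuous and defines a morphism in $\CatCHs{\V}$. Applying Proposition~\ref{d:prop:1} to $\V^{\op}$, the cone $\CatCHs{\V}(X,\V^{\op}) = \Cats{\V}(X,\V^{\op})$ is point-separating and initial, so $X$ is Priestley. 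In particular, $\V^{\op}$ itself belongs to $\Cats{\V}_{\sep,\comp}$.

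For the cofiltered limit presentation, I would apply the Bourbaki-criterion to the canonical diagram of $X \in \Priests{\V}$ with respect to $\Cats{\V}_{\sep,\comp}$: point-separation and initiality of the cone follow from the Priestley condition and $\V^{\op} \in \Cats{\V}_{\sep,\comp}$, while the image condition reduces to noting that the image of any $X \longrightarrow Y$ with $Y$ compact separated is a closed, hence compact, subspace of $Y$ inheriting the $\V$-category structure. With this in hand, the embedding/regular-mono claim follows as in Remark~\ref{d:rem:4}(\ref{d:item:2}): for an embedding $m \colon X \longrightarrow Y$, present $Y = \lim_i Y_i$ with $Y_i \in \Cats{\V}_{\sep,\comp}$, factor each $q_i \cdot m$ as a surjection $p_i$ followed by an embedding $m_i$, and use the Bourbaki-criterion to conclude $m = \lim_i m_i$. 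Since each $m_i$ is a regular monomorphism in $\Cats{\V}_{\sep,\comp}$ by the previous corollary, it is one in $\Priests{\V}$, and therefore so is $m$.

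For the epimorphism claim, I would factor any epi $f \colon X \longrightarrow Y$ as $f = m \cdot e$ with $e$ surjective and $m$ an embedding, using the factorisation in $\CatCHs{\V}$; the middle object is Priestley because an embedding into $Y$ produces a point-separating initial cone into $\V^{\op}$ by composition. Then $m$ inherits the epi property from $f$ and is a regular monomorphism by the previous step, hence an isomorphism, and $f$ is surjective. The assertion that $\V^{\op}$ is a regular cogenerator is then immediate: by definition of \emph{Priestley}, each $X$ embeds in a power of $\V^{\op}$ via a point-separating initial monocone, and this embedding is a regular monomorphism by what was just proved. The main difficulty I anticipate lies in the very first step—verifying carefully that a $\V$-functor from a compact separated $\V$-category to $\V^{\op}$ really is a morphism in $\CatCHs{\V}_{\sep}$—which requires aligning the L-topology on the domain with the Lawson topology on the codomain via Proposition~\ref{d:prop:3}, the preceding lemma, and Remark~\ref{d:rem:2}.
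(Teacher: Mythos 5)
Your proposal is correct and follows essentially the same route as the paper: the paper's own proof consists only of invoking the argument of Remark~\ref{d:rem:4}~(\ref{d:item:2}) for the embedding claim and the (surjective, embedding)-factorisation for the epimorphism claim, exactly as you do. The additional details you supply for the first two assertions (continuity of $\V$-functors into $\V^{\op}$ via the coincidence of the L-topology with the Lawson topology, and the Bourbaki-criterion for the cofiltered-limit presentation) are left implicit in the paper but are the intended justifications.
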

\begin{proof}
  Regarding embeddings, we use the same argument as in
  Remark~\ref{d:rem:4}~(\ref{d:item:2}). Every epimorphism \(e\) in
  \(\Priests{\V}\) factorises as \(e=m\cdot g\) where \(g\) is surjective and
  \(m\) is a regular monomorphism, hence \(m\) is an isomorphism and therefore
  \(e\) is surjective.
\end{proof}

\begin{remark}
  If \(\V\) is finite, then \(\V^{\op}\) is finitely copresentable in
  \(\Priests{\V}\) and, with the same argument as in
  Remark~\ref{d:rem:4}~(\ref{d:item:3}), we deduce that \(\V^{\op}\) is regular
  injective in \(\Priests{\V}\). Unfortunately, the same argument does not seem
  to work if \(\V\) is infinite since in this case
  \begin{itemize}
  \item \(\V^{\op}\) is countably but in general not finitely copresentable in
    \(\Priests{\V}\), but
  \item we are not able to prove that every \(\V\)-enriched Priestley space is a
    \(\aleph_{1}\)-cofiltered limit of compact separated \(\V\)-categories.
  \end{itemize}
\end{remark}

We finish this paper by bringing another well-known result from order theory
into the enriched realm: every \(\V\)-categorical compact Hausdorff space is a
quotient of a Priestley space. We shall make use of the free \(\V\)-categorical
compact Hausdorff space, for \(\thU\)-category \((X,a)\), and therefore assume
that our \emph{topological theory \(\thU=\utheory\) is strict} in the sense of
\citep{Hof07}:

\begin{assumption}
  The complete lattice \(\V\) is completely distributive, and we consider the
  Lawson topology \(\xi \colon U\V \longrightarrow \V\) (see
  Remark~\ref{d:rem:2}). Furthermore, the tensor
  \(\otimes \colon \V\times\V\to\V\) is continuous with respect to the Lawson
  topology.
\end{assumption}

We consider the free \(\V\)-categorical compact Hausdorff space
\begin{displaymath}
  (\ftU X,\widehat{a},m_{X})
\end{displaymath}
of a \(\thU\)-category \((X,a)\) where \(\widehat{a}=\ftU a\cdot m_{X}^{\circ}\)
(see \citep[Theorem~III.5.3.5]{HST14}). Moreover, by \citep[Lemma~6.7 and
Proposition~6.11]{Hof07}, the map
\begin{displaymath}
  \delta_{A}\colon X \longrightarrow\V,\quad
  x \longmapsto \bigvee\{a(\fx,x)\mid \fx\in \ftU A\}
\end{displaymath}
is an \(\thU\)-functor, for every \(A\subseteq X\), since it can be written as
the composite
\begin{displaymath}
  \begin{tikzcd}[column sep=large]
    X\ar{r}{\mate{a}} \ar[bend left=40]{rr}{\delta_{A}}
    & \V^{\ftU A}\ar{r}{\bigvee} & \V.
  \end{tikzcd}
\end{displaymath}
For our next result we need to consider a stronger version of
Assumption~\ref{d:ass:3} which \emph{we assume from now on}:

\begin{assumption}
  The set \(\{u\in\V\mid u\lll v\}\) is directed, for every \(v\in\V\).
\end{assumption}

\begin{lemma}\label{d:lem:5}
  For every \(\thU\)-category \((X,a)\) and all \(\fx,\fy\in \ftU X\),
  \begin{displaymath}
    \widehat{a}(\fx,\fy)=\bigvee\{u\in \V
    \mid \forall A\in\fx\,.\,\delta_{A}^{-1}(\upc u)\in \fy\}.
  \end{displaymath}
\end{lemma}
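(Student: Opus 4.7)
My strategy is to prove the two inequalities separately by unfolding the definition $\widehat{a}=\ftU a\cdot m_X^\circ$. Writing out relational composition and the $\ftU$-extension of the $\V$-relation $a$, one obtains
\[
  \widehat{a}(\fx,\fy)
  =\bigvee\{\xi(\ftU a(\mathfrak{W}))
  \mid \mathfrak{W}\in\ftU(\ftU X\times X),\ m_X\cdot\ftU\pi_1(\mathfrak{W})=\fx,\ \ftU\pi_2(\mathfrak{W})=\fy\}.
\]
The main tool will be the description of Scott convergence on $\V$ (Proposition~\ref{p:0} together with Remark~\ref{d:rem:2}): for an ultrafilter $\fv$ on $\V$, $\xi(\fv)\ge u$ if and only if, for every $v\lll u$, the Scott-open set $\{w\in\V\mid v\lll w\}$ belongs to $\fv$. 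Let $R(\fx,\fy)$ denote the right-hand side of the claimed identity.

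For the inequality $\widehat{a}(\fx,\fy)\le R(\fx,\fy)$, I fix a witness $\mathfrak{W}$, put $u:=\xi(\ftU a(\mathfrak{W}))$, and show that every $v\lll u$ lies in the defining set of $R(\fx,\fy)$. Given $A\in\fx$, the condition $m_X\cdot\ftU\pi_1(\mathfrak{W})=\fx$ forces $\{(\fx',x)\mid A\in\fx'\}\in\mathfrak{W}$, and the Scott-convergence criterion applied to $u$ forces $\{(\fx',x)\mid v\lll a(\fx',x)\}\in\mathfrak{W}$. Intersecting these two sets and pushing forward along $\ftU\pi_2$ yields $\delta_A^{-1}(\upc v)\in\fy$, since on the intersection one has $\fx'\in\ftU A$ with $v\le a(\fx',x)\le\delta_A(x)$. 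Complete distributivity of $\V$ then gives $u=\bigvee\{v\mid v\lll u\}\le R(\fx,\fy)$, and the claim follows upon taking the supremum over $\mathfrak{W}$.

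For the converse $R(\fx,\fy)\le\widehat{a}(\fx,\fy)$, it suffices to show that every $u$ in the defining set of $R(\fx,\fy)$ satisfies $u\le\widehat{a}(\fx,\fy)$. The plan is to produce an ultrafilter $\mathfrak{W}$ on $\ftU X\times X$ with $m_X\cdot\ftU\pi_1(\mathfrak{W})=\fx$, $\ftU\pi_2(\mathfrak{W})=\fy$ and $\xi(\ftU a(\mathfrak{W}))\ge u$, by refining the filter generated by
\[
  \{(\fx',x)\mid A\in\fx'\}\ (A\in\fx),\quad
  \{(\fx',x)\mid x\in B\}\ (B\in\fy),\quad
  E_v:=\{(\fx',x)\mid v\lll a(\fx',x)\}\ (v\lll u);
\]
that these constraints on $\mathfrak{W}$ deliver exactly the three required conditions (the last via the Scott-convergence criterion) is immediate from the definitions of $m_X$ and $\ftU\pi_i$. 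The substantive obstacle is proving properness of this filter. Using the directedness of $\{v\mid v\lll u\}$ and the monotonicity $v'\le v\Rightarrow E_v\subseteq E_{v'}$, any finite intersection of generators contains a basic set of the form $\{A\in\fx'\}\cap\{x\in B\}\cap E_v$ with $A\in\fx$, $B\in\fy$, $v\lll u$. Since $u$ lies in the defining set of $R(\fx,\fy)$, the set $\delta_A^{-1}(\upc u)\cap B$ belongs to $\fy$ and is in particular nonempty; choosing $x$ in it gives $u\le\delta_A(x)=\bigvee\{a(\fx'',x)\mid\fx''\in\ftU A\}$. Interpolating $v\lll v_1\lll u$ and applying the defining property of $\lll$ to this join yields some $\fx'\in\ftU A$ with $v_1\le a(\fx',x)$, hence $v\lll a(\fx',x)$; the pair $(\fx',x)$ witnesses nonemptiness, so any ultrafilter refinement completes the argument.
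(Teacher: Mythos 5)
Your proof is correct. Note that the paper does not actually spell out an argument for this lemma---it defers entirely to \citep[page~83]{Hof13} and \citep[Corollary~1.5]{Hof06}---so what you have produced is a genuine self-contained proof. The cited route computes \(\widehat{a}\) via the closed-form expression \(\bigwedge_{\mathcal{A}\in\mathfrak{X},B\in\fy}\bigvee_{\fx'\in\mathcal{A},x\in B}a(\fx',x)\) for the lax extension supplied by \citep[Corollary~1.5]{Hof06} and then manipulates the resulting suprema and infima; you instead work directly with the raw ultrafilter description of \(\ftU a\) and of \(m_{X}\), which makes the proof longer but entirely elementary and independent of that corollary. Both arguments ultimately rest on the same two ingredients, and you deploy them in the right places: complete distributivity of \(\V\) enters through the Scott-convergence criterion \(\xi(\fv)\ge u\iff\forall v\lll u\,.\,\{w\mid v\lll w\}\in\fv\) and through interpolation of \(\lll\), while the standing assumption that \(\{v\in\V\mid v\lll u\}\) is directed enters exactly where it must, namely in the properness of the filter base generated by the sets \(E_{v}\). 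Your nonemptiness argument for \(\{(\fx',x)\mid A\in\fx'\}\cap\{(\fx',x)\mid x\in B\}\cap E_{v}\) is the substantive step and is carried out correctly (the interpolation \(v\lll v_{1}\lll u\) is genuinely needed there, since membership in \(E_{v}\) requires \(v\lll a(\fx',x)\) and not merely \(v\le a(\fx',x)\)). The only case you pass over in silence is \(\{v\mid v\lll u\}=\varnothing\), but then \(u=\bot\) by complete distributivity and the inequality \(u\le\widehat{a}(\fx,\fy)\) is trivial, so nothing is lost.
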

\begin{proof}
  Same as in \citep[page~83]{Hof13}, which in turn relies on
  \citep[Corollary~1.5]{Hof06}.
\end{proof}

\begin{lemma}
  For every \(\thU\)-category \((X,a)\), the cone
  \begin{displaymath}
    (\ftU X\xrightarrow{\quad\xi\cdot\ftU\delta_{A}\quad}\V)_{A\subseteq X}
  \end{displaymath}
  is initial in \(\CatCHs{\V}\).
\end{lemma}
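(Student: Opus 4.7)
The plan is to verify the $\V$-categorical equation
\[
\widehat{a}(\fx,\fy)=\bigwedge_{A\subseteq X}\hom\bigl(\xi\cdot\ftU\delta_{A}(\fx),\,\xi\cdot\ftU\delta_{A}(\fy)\bigr)
\]
for all $\fx,\fy\in\ftU X$, which is the content of initiality in $\CatCHs{\V}$ with respect to $\COMPHAUS$. Since $\delta_{A}\colon X\longrightarrow\V$ is an $\thU$-functor, the universal property of the free $\V$-categorical compact Hausdorff space $(\ftU X,\widehat{a},m_{X})$ delivers $\xi\cdot\ftU\delta_{A}$ as a morphism $(\ftU X,\widehat{a},m_{X})\longrightarrow(\V,\hom,\xi)$ in $\CatCHs{\V}$; hence the ``$\leq$''-inequality is automatic from $\V$-functoriality, and the work lies in the reverse inequality.

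For ``$\geq$'', denote the right-hand side by $v$. Since $\V$ is completely distributive, $v=\bigvee\{u\in\V\mid u\lll v\}$, so it suffices to prove $u\leq\widehat{a}(\fx,\fy)$ for every $u\lll v$, which by Lemma~\ref{d:lem:5} will follow once we establish $\delta_{A}^{-1}(\upc u)\in\fy$ for every $A\in\fx$.

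The decisive observation will be that $\xi\cdot\ftU\delta_{A}(\fx)\geq k$ whenever $A\in\fx$: for any $B\in\fx$ the intersection $B\cap A\in\fx$ is nonempty, and each $x\in B\cap A$ satisfies $\delta_{A}(x)\geq a(\doo{x},x)\geq k$, so $\bigvee\delta_{A}(B)\geq k$ and hence $\xi\cdot\ftU\delta_{A}(\fx)=\bigwedge_{B\in\fx}\bigvee\delta_{A}(B)\geq k$. Combined with $u\leq v\leq\hom(\xi\cdot\ftU\delta_{A}(\fx),\xi\cdot\ftU\delta_{A}(\fy))$ and the adjunction defining $\hom$, this gives
\[
u\;=\;u\otimes k\;\leq\;u\otimes\xi\cdot\ftU\delta_{A}(\fx)\;\leq\;\xi\cdot\ftU\delta_{A}(\fy)\;=\;\bigwedge_{C\in\fy}\bigvee\delta_{A}(C).
\]
From $u\lll v\leq\xi\cdot\ftU\delta_{A}(\fy)$ one upgrades to $u\lll\xi\cdot\ftU\delta_{A}(\fy)$, and the compatibility of the totally-below relation with infima then yields $u\lll\bigvee\delta_{A}(C)$ for each $C\in\fy$, which produces some $x\in C$ with $u\leq\delta_{A}(x)$. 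Hence $\delta_{A}^{-1}(\upc u)$ meets every element of the ultrafilter $\fy$, so it belongs to $\fy$.

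The principal obstacle is juggling the totally-below relation through the two-level expression $\bigwedge_{C\in\fy}\bigvee\delta_{A}(C)$ defining $\xi\cdot\ftU\delta_{A}(\fy)$; the key trick is to exploit the lower bound $\xi\cdot\ftU\delta_{A}(\fx)\geq k$ in order to ``inject'' $u$ into $\xi\cdot\ftU\delta_{A}(\fy)$ before extracting the pointwise witnesses via complete distributivity, after which the matching with Lemma~\ref{d:lem:5} is immediate.
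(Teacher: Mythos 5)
Your proof is correct and follows essentially the same route as the paper's: both arguments hinge on the observation that \(\xi\cdot\ftU\delta_{A}(\fx)\ge k\) for \(A\in\fx\), reduce via the totally below relation and the ultrafilter property of \(\fy\) to showing \(\delta_{A}^{-1}(\upc u)\in\fy\) for all \(A\in\fx\), and conclude with Lemma~\ref{d:lem:5}. You merely assemble the two ingredients into the initiality equation explicitly (and spell out the \(\ge k\) observation), whereas the paper states them separately and leaves the assembly to the reader.
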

\begin{proof}
  For all \(\fx,\fy\in\ftU X\), we show that
  \begin{displaymath}
    \widehat{a}(\fx,\fy)\ge \bigwedge\{\xi\cdot\ftU\delta_{A}(\fy)\mid A\in\fx\},
  \end{displaymath}
  and observe that \(\delta_{A}(\fx)\ge k\), for every \(A\in\fx\). Let
  \begin{displaymath}
    u\lll \bigwedge\{\xi\cdot\ftU\delta_{A}(\fy)\mid A\in\fx\}.
  \end{displaymath}
  Then, for every \(A\in\fx\), \(u\lll \xi\cdot\delta_{A}(\fy)\), and therefore
  \(\upc u\in\ftU\delta_{A}(\fy)\), which is equivalent to
  \(\delta_{A}^{-1}(\upc u)\in \fy\). Therefore \(u\le \widehat{a}(\fx,\fy)\),
  by Lemma~\ref{d:lem:5}.
\end{proof}

\begin{corollary}
  For every \(\thU\)-category \((X,a)\), the \(\V\)-categorical compact
  Hausdorff space \((\ftU X)^{\op}\) is Priestley.
\end{corollary}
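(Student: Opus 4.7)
The plan is to deduce the corollary as a direct dualisation of the preceding lemma. That lemma supplies the initial cone $(\xi\cdot\ftU\delta_{A}\colon\ftU X\to\V)_{A\subseteq X}$ in $\CatCHs{\V}$, with codomain the $\V$-categorical compact Hausdorff space $\V=(\V,\hom,\xi)$. I would start by recording that the Lemma's proof in fact establishes the stronger identity
\begin{displaymath}
   \widehat{a}(\fx,\fy)=\bigwedge_{A\in\fx}\xi\cdot\ftU\delta_{A}(\fy),
\end{displaymath}
which will be needed for the separation step at the end.

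Next, I would invoke the fact that $(-)^{\op}\colon\CatCHs{\V}\to\CatCHs{\V}$ is a concrete isomorphism: it leaves the underlying set, the $\mU$-algebra structure $m_{X}$, and hence the compact Hausdorff topology untouched, merely exchanging $\widehat{a}$ with $\widehat{a}^{\circ}$. Consequently, a morphism $f\colon(\ftU X,\widehat{a},m_{X})\to(\V,\hom,\xi)$ is literally the same data as a morphism $f\colon(\ftU X)^{\op}\to\V^{\op}$, and since $(-)^{\op}$ commutes with the forgetful functor to $\COMPHAUS$, both initiality and point-separation are preserved by this bijection. Applying the bijection to the cone from the Lemma yields an initial cone $(\xi\cdot\ftU\delta_{A}\colon(\ftU X)^{\op}\to\V^{\op})_{A\subseteq X}$, which is a sub-cone of $\CatCHs{\V}((\ftU X)^{\op},\V^{\op})$; so the full cone of morphisms into $\V^{\op}$ is also initial with respect to $\CatCHs{\V}\to\COMPHAUS$.

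The only genuinely non-formal step, and the place where I expect the main technical effort, is verifying that this cone is point-separating, as required by the definition of Priestley. Given $\fx\neq\fy$ in $\ftU X$, I would select some $A\in\fx$ with $A\notin\fy$, and use the Lemma's proof, together with the observation that $\xi\cdot\ftU\delta_{A}(\fx)\ge k$ (because $\delta_{A}(x)\ge a(\doo{x},x)\ge k$ for $x\in A$ forces $\upc k\in\ftU\delta_{A}(\fx)$), to show that $\xi\cdot\ftU\delta_{A}$ takes different values at $\fx$ and $\fy$. The expected mechanism is that if every $\xi\cdot\ftU\delta_{A}$ agreed on $\fx$ and $\fy$, then the displayed formula above would yield $\widehat{a}(\fx,\fy)\ge k$ and symmetrically $\widehat{a}(\fy,\fx)\ge k$, forcing $\fx=\fy$ in the relevant separated quotient. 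Once this separation is established, the corollary follows immediately by definition.
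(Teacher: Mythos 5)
Your treatment of initiality is correct and reconstructs what the paper intends: the identity $\widehat{a}(\fx,\fy)=\bigwedge_{A\in\fx}\xi\cdot\ftU\delta_{A}(\fy)$ does follow from the lemma's proof (combine the displayed inequality there with $\widehat{a}(\fx,\fy)\le\hom(\xi\cdot\ftU\delta_{A}(\fx),\xi\cdot\ftU\delta_{A}(\fy))\le\xi\cdot\ftU\delta_{A}(\fy)$ for $A\in\fx$, using $k\le\xi\cdot\ftU\delta_{A}(\fx)$), and transporting the initial cone along the concrete isomorphism $(-)^{\op}$ and then enlarging it to the full cone $\CatCHs{\V}((\ftU X)^{\op},\V^{\op})$ is unproblematic. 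The paper offers nothing beyond this, so up to that point you match the intended derivation.

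The point-separation step, however, does not close as written, and you have put your finger on exactly the delicate spot. First, picking $A\in\fx$ with $A\notin\fy$ does not in general make $\xi\cdot\ftU\delta_{A}$ distinguish $\fx$ from $\fy$: $\delta_{A}$ is a closure-type map and can be identically $\top$ for every nonempty $A$ (take $(X,a)$ with $a\equiv\top$). Second, your fallback mechanism only yields $\widehat{a}(\fx,\fy)\ge k$ and $\widehat{a}(\fy,\fx)\ge k$, i.e.\ $\fx\simeq\fy$; this forces $\fx=\fy$ only if $(\ftU X,\widehat{a})$ is \emph{separated}, which is exactly what is not known for an arbitrary $\thU$-category --- in the indiscrete example just mentioned one gets $\widehat{a}\equiv\top$, the free algebra is maximally non-separated, and every morphism $(\ftU X)^{\op}\longrightarrow\V^{\op}$ is constant. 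Passing to ``the relevant separated quotient'' changes the object, so it cannot establish that the cone on $(\ftU X)^{\op}$ itself is point-separating, which is what the definition of Priestley requires. As it stands this step is a genuine gap: you either need an additional hypothesis guaranteeing separatedness of $(\ftU X,\widehat{a})$ (for instance, restricting to the $\thU$-categories that actually occur in the subsequent corollary, namely those underlying $\V$-categorical compact Hausdorff spaces), or a different argument for separation altogether; the paper itself supplies neither.
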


\begin{corollary}
  Every \(\V\)-categorical compact Hausdorff space is a regular quotient of a
  Priestley space.
\end{corollary}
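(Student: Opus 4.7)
My plan is to derive the result from the previous corollary by applying it to (the $\thU$-category underlying) $X^{\op}$, and then dualising.

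Given $X=(X,a,\alpha)$ in $\CatCHs{\V}$, consider $X^{\op}=(X,a^{\circ},\alpha)$ and let $(X,b)$ be the underlying $\thU$-category of $X^{\op}$, given by the standard formula $b(\fx,x)=a(x,\alpha(\fx))$ (see~\citep{HST14}). Form the free $\V$-categorical compact Hausdorff space $F=(\ftU X,\widehat{b},m_{X})$ on $(X,b)$, as in the paragraph preceding Lemma~\ref{d:lem:5}. By the universal property of the free object, the convergence $\alpha\colon\ftU X\to X$ defines a morphism $\alpha\colon F\to X^{\op}$ in $\CatCHs{\V}$, which is nothing but the structure map of $X^{\op}$ as an Eilenberg--Moore algebra for the extended monad $\mU$ on $\Cats{\V}$.

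The previous corollary applied to the $\thU$-category $(X,b)$ yields that $F^{\op}=(\ftU X,\widehat{b}^{\circ},m_{X})$ is Priestley. Since $(-)^{\op}\colon\CatCHs{\V}\to\CatCHs{\V}$ is a concrete self-isomorphism and $(X^{\op})^{\op}=X$, applying $(-)^{\op}$ to $\alpha\colon F\to X^{\op}$ produces a morphism $\alpha\colon F^{\op}\to X$ in $\CatCHs{\V}$ whose source is the Priestley space $F^{\op}$. Finally, $\alpha$ is a regular epimorphism: its underlying map is surjective, being split by $e_{X}\colon X\to\ftU X$, and as the structure map of an algebra for $\mU$ on $\Cats{\V}$ it arises in $\CatCHs{\V}$ as the coequalizer of $m_{X}$ and $\ftU\alpha$.

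The only real subtlety is the placement of the op: applying the construction directly to $X$ would produce a morphism targeting $X^{\op}$ rather than $X$, so the initial dualisation is essential. Beyond that, the argument is a direct combination of the previous corollary with standard properties of Eilenberg--Moore algebras.
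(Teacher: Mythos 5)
Your proposal is correct and follows essentially the same route as the paper: dualise $X$, form the free $\V$-categorical compact Hausdorff space on the underlying $\thU$-category of $X^{\op}$ (which is Priestley after applying $(-)^{\op}$, by the preceding corollary), and observe that the structure map $\alpha$ is a regular quotient, then transport back along the concrete isomorphism $(-)^{\op}$. Your version merely makes explicit what the paper leaves implicit, namely why $\alpha$ is a regular epimorphism (the canonical split coequalizer presentation of an Eilenberg--Moore algebra) and why the initial dualisation is needed.
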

\begin{proof}
  With \(\alpha \colon\ftU X \longrightarrow X\) denoting the convergence of
  \(X\) (and \(X^{\op}\)),
  \begin{displaymath}
    \alpha \colon \ftU(X^{\op}) \longrightarrow X^{\op}
  \end{displaymath}
  is a regular quotient in \(\CatCHs{\V}\), and hence also
  \(\alpha \colon \ftU(X^{\op})^{\op} \longrightarrow X\).
\end{proof}



\end{document}